\theoremstyle{plain}
\newtheorem{thm}{Theorem}[section]
\newtheorem{defn}{Definition}[section]
\newtheorem{lem}[thm]{Lemma}
\newtheorem{cor}[thm]{Corollary}
\newtheorem{prop}[thm]{Proposition}
\theoremstyle{remark}
\newtheorem{rem}{\bf{Remark}}[section]
\newtheorem{ex}{Example}[section]
\numberwithin{equation}{section}
\newcommand{\N}{\mathbb{N}}
\newcommand{\R}{\mathbb{R}}
\newcommand{\cN}{\mathcal{N}}
\newcommand{\ocC}{\overline{\mathcal{C}}}
\newcommand{\USC}{{\rm USC\,}}
\newcommand{\LSC}{{\rm LSC\,}}
\newcommand{\Lip}{{\rm Lip\,}}
\newcommand{\ep}{\varepsilon}
\newcommand{\ol}{\overline}
\newcommand{\ul}{\underline}
\newcommand{\inter}{{\rm int}\,}
\newcommand{\cl}{{\rm cl}\,}
\newcommand{\dist}{{\rm dist}\,}
\newcommand{\sd}{{\rm sd}}
\newcommand{\Per}{{\rm Per}\,}
\newcommand{\vep}{\varepsilon}
\newcommand{\re}{\textcolor{red}}
\newcommand{\bl}{\textcolor{blue}}
\newcommand{\ma}{\textcolor{magenta}}
\begin{document}
\title[A representation formula for nonlocal equations]{A representation formula for viscosity solutions of nonlocal Hamilton--Jacobi equations and applications}

\date{\today}

\author[T. Kagaya]{Takashi Kagaya}
\address[T. Kagaya]{Graduate school of Engineering, Muroran Institute of Technology, 27-1 Mizumoto-cho, Hokkaido, 050-8585, Japan}
\email{kagaya@mmm.muroran-it.ac.jp}

\author[Q. Liu]{Qing Liu}
\address[Q. Liu]{Geometric Partial Differential Equations Unit, Okinawa Institute of Science and Technology Graduate University, 1919-1 Tancha, Onna-son, Kunigami-gun, 
Okinawa, 904-0495, Japan, 
}
\email{qing.liu@oist.jp}

\author[H. Mitake]{Hiroyoshi Mitake}
\address[H. Mitake]{
Graduate School of Mathematical Sciences, 
University of Tokyo, 
3-8-1 Komaba, Meguro-ku, Tokyo, 153-8914, Japan}
\email{mitake@ms.u-tokyo.ac.jp}

\keywords{Nonlocal Hamilton-Jacobi equations; Viscosity solutions, Parallel surfaces}
\subjclass[2020]{49L25, 49J15, 53C44} 
\date{\today}

\begin{abstract}
This paper is concerned with geometric motion of a closed surface whose velocity depends on a nonlocal quantity of 
the enclosed region.  Using the level set formulation, we study a class of nonlocal Hamilton--Jacobi equations and establish a control-based representation 
formula for solutions. We also apply the formula to discuss the fattening phenomenon and large-time asymptotics of the solutions. 
\end{abstract}

\maketitle

\section{Introduction}
In this paper we study the Cauchy problem for a class of nonlocal Hamilton--Jacobi equations of the form 
\begin{numcases}{}
u_t(x,t)+|\nabla u(x, t)|f\big(u(x,t), \mu(\{u(\cdot, t) < u(x, t)\})\big)=0 & in $ \mathbb{R}^n\times(0, \infty)$, \label{E1}\\
u(x,0)=u_0(x) & in $ \mathbb{R}^n$,  \label{initial}
\end{numcases}
where an initial condition $u_0: \R^n\to \R$ is continuous. 
Here, $f: \R \times [0, \infty) \to \R$ and $\mu: \mathcal{B} \to [0, \infty)$ are given functions, where we denote by $\mathcal{B}$ 
the family of all measurable sets in $\R^n$. We impose the following assumptions on $f$ and $\mu$. 
\begin{enumerate}
\item[(A1)] $f$ is bounded and continuous. Furthermore, there exists a modulus of continuity $\omega_f$ such that 
\[ |f(r, q_1) - f(r, q_2)| \le \omega_f (|q_1- q_2|) \quad \text{for} \; \; q \in \R, \; q_1, q_2 \ge 0. \]
\item[(A2)] $f$ is increasing in each variable in the following sense: 
\[
f(r_1, q) \le f(r_2, q) \quad \text{for} \; \; r_1 < r_2, \; q \ge 0,
\]
\begin{equation}\label{monotone eq}
f(r, q_1) < f(r, q_2) \quad \text{for} \; \; r \in \R, \; 0 \le q_1 < q_2. 
\end{equation}
\item[(A3)] $\mu$ is a measure in $\R^n$ given by 
\[ \mu(A) := \int_{A} \Theta(x) \; dx \quad \text{for} \; \; A \in \mathcal{B} \]
for a function $\Theta\in L^1(\R^n)$ with $\Theta>0$ in $\R^n$. 
\end{enumerate}
The condition \eqref{monotone eq} in (A2), together with (A3), implies the so-called monotonicity of the nonlocal Hamiltonian. Recall that a general first order nonlocal operator $F: \R\times \R^n\times \mathcal{B}\to \R$ is said to be monotone if $F(r, p, A)\leq F(r, p,  B)$ for all $r\in \R$, $p\in \R^n$ and $A, B\in \mathcal{B}$ satisfying $A\subset B$. 

For the initial value $u_0$, we assume that 
\begin{itemize}
\item[(A4)]
$u_0$ is uniformly continuous and satisfies a coercivity condition 
\begin{equation}\label{as-initial}
\lim_{R \to \infty} \inf\{u_0(y): |y| \ge R\} > u_0(x) \quad \text{for any} \; \; x \in \mathbb{R}^n. 
\end{equation}
\end{itemize}
The coercivity assumption \eqref{as-initial}  implies boundedness of all sublevel sets of $u_0$.

Equation \eqref{E1} can be derived by using the level set approach to the evolution of crystal growth type. More precisely, 
let $u(x,t)$ denote the height of a crystal from the flat plate at position $x$ and time $t$; consult \cite{G2} for more physical background. 
Assume that the crystal grows only horizontally, and that the normal velocity $V$ of the each $h$-level set of $u$ depends on its level and a nonlocal 
quantity of the enclosed region of its level, that is, 
\begin{equation}\label{eq:surface}
V=f(h, \mu(\Omega_t))
\quad \text{on} \ \{u(\cdot, t)=h\}, 
\end{equation}
where $\Omega_t:=\{y\in\R^n\mid u(y,t)<u(x,t)\}$. 
Such equations also appear in the singular limit of some nonlocal reaction diffusion equations \cite{CHL}.

Let us mention a few more related works on the study of level-set equations with nonlocal terms involving sublevel or superlevel sets of the unknown. In \cite{C}, Cardaliaguet established existence and uniqueness of solutions to a general class of nonlocal monotone geometric evolution equations; refer to \cite{CaPa} for related numerical analysis and \cite{Pas} for important applications in image processing. Later, the well-posedness and approximation schemes for the associated Neumann boundary problems are studied in \cite{Sl, DKS}. We refer to \cite{Sr} for existence and uniqueness results on a different type of second order nonlocal geometric equations  motivated by applications in tomographic image reconstruction. We also refer to a more recent work \cite{CMP} on the so-called nonlocal curvature flow, which is  similar to \eqref{eq:surface} but with $f$ depending on the space variable $x$ instead of the value of the unknown. Under the monotonicity condition, the classical viscosity solution theory including the comparison principle can be readily extended to this nonlocal problem. 
On the other hand, in the non-monotone case, one cannot expect the comparison principle to hold and alternative methods are needed to prove uniqueness of solutions and other related properties; see for instance \cite{ACM, BL} in the first order case and  \cite{BLM, KK} for second order problems. 

While the aforementioned results focus mostly on the well-posedness issue of nonlocal evolution equations, in this work we aim to further investigate the behavior of solutions in the case of first order problem \eqref{E1}\eqref{initial}. Thanks to the monotonicity of the nonlocal operator, we can obtain the uniqueness of viscosity solutions through a comparison principle, Theorem \ref{thm:comparison1}, whose proof is included in Appendix \ref{sec:app} for the reader's convenience. 
Hereafter, since we are always concerned with viscosity solutions of \eqref{E1}, \eqref{initial} defined in Definition \ref{def env-sol}, the term ``viscosity" is omitted henceforth. 

Our first main result is a representation formula of the unique solution to \eqref{E1}, \eqref{initial} from the viewpoint of optimal control theory. The strong connection between Hamilton-Jacobi equations (without nonlocal terms) and optimal control theory or differential games is well understood, as elaborated in \cite{LiBook, ESo, BCBook, TraBook}. We here introduce a nonlocal counterpart of these results. Below let us explain the heuristic idea to find our representation formula. Formally speaking, for any fixed $h\in \R$, 
one can rewrite \eqref{E1} as 
\begin{numcases}{}
u_t+|\nabla u| f(h, q_h(t))=0 &\  \text{in $\R^n\times (0, \infty)$,}\label{eq local1}\\ 
q_h(t)=\mu(\{u(\cdot, t)<h\})\label{eq local2}.
\end{numcases}
Suppose that $f(h, q_h(t))>0$ for all $t\geq 0$. Then letting $u(x, t)=v(x, \delta_h(t))$ with $\delta_h: [0, \infty)$ satisfying $\delta_h'(t)=f(h, q_h(t))$ for $t>0$, we can further write \eqref{eq local1} as 
\[
v_t+|\nabla v|=0\quad\text{in $\R^n\times (0, \infty)$}
\] 
without changing the initial data $u_0$. Note that for this simpler equation without nonlocal terms,  the classical optimal control theory \cite{BCBook} provides the following formula for $v$:
\[
v(x, t)=\inf_{|y-x|\leq t} u_0(y), \quad (x, t)\in \R^n\times [0, \infty). 
\]
Setting $D_h(0):=\{u_0<h\}$, we thus get, for any $t>0$, 
\[
\{v(\cdot, t)<h\}=\{x\in \R^n: \dist(x, D_h(0))<t\}, 
\]
where the right hand side, denoted also by $D_h(t)$, is known as the parallel set of $D_h(0)$ at distance $t$. The relation between $u$ and $v$ thus yields 
\begin{equation}\label{intro parallel}
\{u(\cdot, t)<h\}=\{v(\cdot, \delta_h(t))<h\}=D_h(\delta_h(t)).
\end{equation}
Hence, by \eqref{eq local2}, $\delta_h$ satisfies 
\begin{equation}\label{intro ode}
\delta_h'(t)=f(h, \mu(D_h(\delta_h(t)))).
\end{equation}
Once the ordinary differential equation \eqref{intro ode} is solved for all $h\in \R$ and $t\geq 0$, we can find the sublevel set $\{u(\cdot, t)<h\}$ via \eqref{intro parallel}, which further enables us to recover the representation formula of $u$. 

When carrying out our strategy above rigorously, we need to tackle two additional technical issues. In general, the evolutions of sublevel sets $\{u(\cdot, t)<h\}$ and $\{u(\cdot, t)\leq h\}$ may be different, which requires us to treat them separately. Besides, we also allow $f$ to change sign; in fact, this will be another assumption of this work (as in \eqref{h-threshold} below), which causes the nonlocal Hamiltonian to be nonconvex and noncoercive with respect to $\nabla u$, but also gives rise to more intricate behavior of solutions. One can follow the same argument to obtain similar results in the simpler cases when $f>0$ or $f<0$ in $\R\times [0, \infty)$. 

Let us now give a more precise description for our representation formula for solutions to \eqref{E1}, \eqref{initial}. For any $h\in\R$, we take 
\[
D_h(0):=\{u_0<h\}, \quad E_h(0):=\{u_0\leq h\}, 
\]
and define the parallel sets $D_h(s)$ and $E_h(s)$ for all $s\in\R$ of $D_h(0)$ and $E_h(0)$, respectively, by 
\begin{equation}\label{parallel set}
\begin{aligned}
&D_h(s) := \{x\in \R^n: \sd(x, D_h(0))<s\}, \\
&E_h(s) := \{x\in \R^n: \sd(x, E_h(0))\leq s\},
\end{aligned}
\end{equation}
where $\sd(x, A)$ denotes the signed distance to a set $A\subset \R^n$, namely,
\[
\sd(x, A)= {\rm dist}(x, A)- {\rm dist}(x, \R^n\setminus A).
\]
Note that, thanks to \eqref{as-initial}, $D_h(s)$ and $E_h(s)$ are bounded for all $h\in\R$ and $s\in\R$. 

Since we often deal with $D_h(s)$ and $E_h(s)$ in an analogous way, in the sequel we use $W_h(s)$ to represent $D_h(s), E_h(s)$ at the same time. 
Moreover, we denote by $\delta_W (t,h)$ the distance change of parallel sets $W_h(s)$ with respect to time $t$. As clarified in our formal derivation above,  
we expect that $\delta_W$ satisfies the ordinary differential equation 
\begin{equation}\label{dist ODE}
(\delta_W)_t(t,h) = (f_h\circ \mu)\left(W_{h, t}\right) \quad \text{for} \; \; t>0 \quad \text{with} \; \; \delta_W(0,h)=0, 
\end{equation}
where we set 
\begin{equation}\label{abbrev}
f_h := f(h, \cdot), \quad W_{h, t}:=W_h(\delta_W(t,h)).
\end{equation}
Note that $\mu(\inter A)$ and $\mu(\cl A)$ for a set $A \subset \R^n$ may not coincide in general, where $\inter A$ and $\cl A$ respectively denote the interior and the closure of $A$. We thus study a more general ordinary differential inequality 
\begin{equation}\label{dist evolution}
(f_h\circ\mu)(\inter W_{h, t}) \le (\delta_W)_t(t , h)\le (f_h\circ\mu)(\cl {W_{h, t}}) \quad \text{for a.e.} \;  t \ge 0 \ \ \text{with} \; \delta_W(0,h) = 0. 
\end{equation}
We give an existence results for \eqref{dist evolution} in Proposition \ref{prop:unique-ODE}. 
Since the uniqueness of solutions to \eqref{dist evolution} may not hold, we consider the maximal solution $\overline{\delta}_D(\cdot,h)$ to the interior evolution and the minimal solution $\underline{\delta}_E(\cdot,h)$ to the closure evolution, which are uniquely determined; see Proposition \ref{prop:sol-threshold} for more details.  For simplicity of our notation, we still use $\delta_W$ to denote such minimal and maximal solutions, namely, 
\begin{equation}\label{def-sol} 
\delta_E(t, h) := \overline{\delta}_E(t,h), \quad \delta_D(t,h) := \underline{\delta}_D(t,h) \quad \text{for} \; \; t \ge 0, \; h \in \R. 
\end{equation}

Suppose that there exists $\overline{h}\in\R$ depending on $u_0$ such that 
\begin{equation}\label{h-threshold} 
(f_{\overline{h}}\circ\mu)(D_{\overline{h}}(0)) \le 0 \le (f_{\overline{h}}\circ\mu)(E_{\overline{h}}(0)).  
\end{equation} 
Since $h\mapsto (f_h\circ \mu)({W_{h}(0)})$ is strictly increasing, it is easily seen that such constant is unique. 
Since the convexity of the Hamiltonian in gradient changes at $f=0$, our control setting also takes different forms depending on the sign of $h-\ol{h}$.
Set  
\[
U_0(x, h):=u_0(x)-h\quad \text{for $x\in \R^n$ and $h\in \R$}, 
\]
and define the value functions by 
\begin{align}\label{def:value}
&U_D(x, t, h):=
\left\{
\begin{array}{ll}
\min\{U_0(y, h): |y-x|\leq {\delta}_D(t, h)\} & \text{if} \ h > \overline{h}\\
\max\{U_0(y, h): |y-x|\leq -{\delta}_D(t, h)\} & \text{if} \ h \le \overline{h}, 
\end{array}
\right. \\
&U_E(x, t, h):=
\left\{
\begin{array}{ll}
\min\{U_0(y, h): |y-x|\leq {\delta}_E(t, h)\} & \text{if} \ h \ge \overline{h}\\
\max\{U_0(y, h): |y-x|\leq -{\delta}_E(t, h)\} & \text{if} \ h < \overline{h}.  
\end{array}
\right. \label{def:value2}
\end{align}

Since we are interested in the solution to \eqref{E1} and \eqref{initial}, for each $t\geq 0$, we take back the zero level set of $U(x, t, h)$ to recover the evolution starting from $u_0$. To this end, we set, for each $(x, t) \in\R^n\times [0, \infty)$,
\begin{equation}\label{new upper value}
u_D(x, t)=\sup\{h\in \R: U_D(x, t, h)\geq 0\},
\end{equation}
\begin{equation}\label{new lower value}
u_E(x, t)=\inf\{h \in \R: U_E(x, t, h)\leq 0\}.  
\end{equation}
It turns out that, for all $x\in \R^n$ and $t\geq 0$, we have
\begin{equation}\label{bounds}
\min\{u_0(x),\ \ol{h}\} \leq u_E(x, t) \leq u_D(x, t)  \leq \max\{u_0(x),\ \ol{h}\} \quad \text{for all $(x, t)\in\R^n\times [0, \infty)$}; 
\end{equation}
see Proposition \ref{prop direct compare} for the proof. 
Our definition of $u_D$ and $u_E$ using the height functions $U_D$ and $U_E$ adapts the original idea in \cite{KS2} to our nonlocal problem. 


%

\begin{thm}\label{thm:control} 
Assume that (A1)--(A4) hold and there exists $\ol{h}\in \R$ satisfying \eqref{h-threshold}. Let $u_D$ and $u_E$ be the functions defined, respectively, by \eqref{new upper value} and \eqref{new lower value}.  Then $u:=u_D=u_E$ is the unique continuous solution of \eqref{E1}, \eqref{initial} satisfying the growth condition that,  
for any $T>0$, there exists $M_T > 0$ such that 
\begin{equation}\label{eq:growth} 
|u(x,t)| \le M_T (|x| + 1) \quad \text{for all} \; \; (x,t) \in \R^n \times [0,T]. 
\end{equation}
\end{thm}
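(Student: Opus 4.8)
The plan is to show that the two candidates $u_D$ and $u_E$ are, respectively, a viscosity subsolution and a viscosity supersolution of \eqref{E1}, \eqref{initial} in the class \eqref{eq:growth}, and then to conclude by combining the comparison principle (Theorem~\ref{thm:comparison1}) with the a priori ordering $u_E\le u_D$ from \eqref{bounds}. Indeed, once $u_D$ is a subsolution and $u_E$ a supersolution, comparison yields $u_D\le u_E$, so together with \eqref{bounds} we get $u:=u_D=u_E$; this common function is then simultaneously a sub- and a supersolution, hence \emph{the} solution, and uniqueness within \eqref{eq:growth} follows once more from Theorem~\ref{thm:comparison1}.

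First I would record the elementary properties of the building blocks. By Propositions~\ref{prop:unique-ODE}--\ref{prop:sol-threshold}, the extremal distance functions $\delta_D(\cdot,h)$ and $\delta_E(\cdot,h)$ exist, are Lipschitz in $t$ (with $|(\delta_W)_t|\le\|f\|_\infty$ by (A1)) and are monotone in $h$ in the appropriate sense; their sign is governed by \eqref{h-threshold} (nonnegative for $h>\overline h$, nonpositive for $h<\overline h$), which is exactly why the value functions \eqref{def:value}, \eqref{def:value2} switch between a $\min$ over $\overline B(x,\delta_W)$ and a $\max$ over $\overline B(x,-\delta_W)$. From this together with (A4) one checks that $U_D,U_E$ are continuous in $(x,t,h)$ and (strictly) monotone in $h$ away from the degenerate level $\overline h$, so that $u_D,u_E$ defined by \eqref{new upper value}, \eqref{new lower value} are finite by \eqref{bounds}; continuity of $u_D,u_E$ and the growth bound \eqref{eq:growth} then follow from the uniform continuity and coercivity of $u_0$ and the boundedness of $\delta_D,\delta_E$. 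Finally, since $\delta_D(0,h)=\delta_E(0,h)=0$, the ball degenerates to a point at $t=0$ and $u_D(\cdot,0)=u_E(\cdot,0)=u_0$, so the initial condition \eqref{initial} holds.

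The geometric heart of the argument is to identify the sub/superlevel sets of the candidates with parallel sets. For $h>\overline h$, for instance, $U_D(x,t,h)\ge 0$ holds iff $\overline B(x,\delta_D(t,h))\cap\{u_0<h\}=\emptyset$, i.e. $\dist(x,D_h(0))\ge\delta_D(t,h)$, whence $\{x:U_D(x,t,h)<0\}=D_h(\delta_D(t,h))$; the analogous identities (with an erosion when $h\le\overline h$, and with $E_h$ in place of $D_h$ for $u_E$) exhibit $\{u_D(\cdot,t)<h\}$ and $\{u_E(\cdot,t)\le h\}$ as the parallel-set evolutions $W_{h,t}$ of \eqref{abbrev}. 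Because $\inter W_{h,t}$ and $\cl W_{h,t}$ may carry different $\mu$-mass, one obtains only the two-sided bound $\mu(\inter W_{h,t})\le\mu(\{u_D(\cdot,t)<h\})\le\mu(\cl W_{h,t})$, and this is precisely why $\delta_D,\delta_E$ are chosen to solve the one-sided problem \eqref{dist evolution} rather than the formal ODE \eqref{dist ODE}, and why the extremal selections \eqref{def-sol} enter.

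Then comes the main point, and the main obstacle: proving that $u_D$ is a subsolution and $u_E$ a supersolution of \eqref{E1}. I would argue level by level via a frozen-coefficient device. Fix $(x_0,t_0)$ and a smooth $\phi$ touching $u_D$ from above there, with $h_0:=u_D(x_0,t_0)$; the identities above show that $\{u_D(\cdot,t)<h_0\}$ evolves as the parallel set $D_{h_0}(\delta_D(t,h_0))$, whose signed-distance profile $w_{h_0}(x,t):=\sd(x,D_{h_0}(0))-\delta_D(t,h_0)$ is, in the viscosity sense, a solution of the local eikonal equation $(w_{h_0})_t+|\nabla w_{h_0}|\,(\delta_D)_t(t,h_0)=0$; using $(\delta_D)_t(t_0,h_0)\le(f_{h_0}\circ\mu)(\cl W_{h_0,t_0})$ from \eqref{dist evolution}, the two-sided $\mu$-bound of the previous paragraph, and (A2)--(A3) to replace $\mu(\cl W_{h_0,t_0})$ by $\mu(\{u_D(\cdot,t_0)<h_0\})$ at the touching level, one converts this into the required differential inequality for $\phi$ at $(x_0,t_0)$; monotonicity (A2) is what permits the passage from the frozen level $h_0$ to a genuine subsolution of the nonlocal equation, and the case $h_0\le\overline h$, where $f$ has changed sign and the control problem becomes a $\max$, is handled symmetrically. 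The delicate points are (i) the interior/closure discrepancy, absorbed by the one-sided ODI \eqref{dist evolution} and the extremal selections \eqref{def-sol}; (ii) the distinction between $\{u<h_0\}$ and $\{u\le h_0\}$, absorbed by the parallel pair $D_h$ versus $E_h$; and (iii) the sign change of $f$ at $\overline h$, absorbed by the case split in \eqref{def:value}, \eqref{def:value2}. With $u_D$ a subsolution, $u_E$ a supersolution, and $u_E\le u_D$ from \eqref{bounds}, Theorem~\ref{thm:comparison1} forces $u_D=u_E$, which is then the unique continuous solution of \eqref{E1}, \eqref{initial} satisfying \eqref{eq:growth}.
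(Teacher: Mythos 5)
Your global architecture is the same as the paper's: show $u_D$ is a subsolution and $u_E$ a supersolution in the growth class, apply Theorem~\ref{thm:comparison1} to get $u_D\le u_E$, and combine with $u_E\le u_D$ from \eqref{bounds} to conclude $u=u_D=u_E$ and uniqueness. The gap is in the central step, the verification of the viscosity inequalities, and the one concrete mechanism you give there is mis-oriented. Touching $u_D$ from above by $\phi$ at $(x_0,t_0)$ with $h_0=u_D(x_0,t_0)>\ol{h}$ leads (via the expansion of the sublevel set) to $\phi_t+|\nabla\phi|\,(\delta_D)_t(t_0,h_0)\le 0$, so to conclude the subsolution inequality you need a \emph{lower} bound $(\delta_D)_t(t_0,h_0)\ge f\bigl(h_0,\mu(\{u_D(\cdot,t_0)<h_0\})\bigr)$. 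The side of \eqref{dist evolution} you invoke, $(\delta_D)_t\le (f_{h_0}\circ\mu)(\cl{W_{h_0,t_0}})$, combined with the monotonicity (A2) and the inclusion $\{u_D(\cdot,t_0)<h_0\}\subset\cl{W_{h_0,t_0}}$, produces inequalities pointing the wrong way, so your "replacement of $\mu(\cl{W_{h_0,t_0}})$ by $\mu(\{u_D(\cdot,t_0)<h_0\})$" does not go through. What makes the step work in the paper is that the extremal selections \eqref{def-sol} actually solve \eqref{dist ODE} with \emph{equality}, with the open set $D_{h_0}(\delta_D(t_0,h_0))=\{u_D(\cdot,t_0)<h_0\}$ as the argument of $\mu$ (Lemma~\ref{lem:reform-ODE} and Corollary~\ref{cor:ODE-oh}, which rest on the inverse isoperimetric inequality \eqref{bdd-boundary} and \eqref{same-cl-in}); the interior side of \eqref{dist evolution} would also suffice, but that is not the side you used.

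Second, the passage from "each sublevel set moves as a parallel set with speed $(\delta_D)_t$" to the pointwise viscosity inequality for the height function $u_D$ is precisely the nontrivial content, and your frozen-level eikonal device leaves it unproven: you do not explain how a test function touching $u_D$ at $(x_0,t_0)$ yields an admissible test function for the signed-distance profile $w_{h_0}$ (the relabeling of levels is $h$-dependent), nor how the degenerate case $\nabla\phi(x_0,t_0)=0$ is handled. The paper supplies this mechanism through the dynamic programming principle for $U_D,U_E$ (Proposition~\ref{prop dpp}), the inequality $U_D(x,t,u_D(x,t))\ge 0$ in \eqref{eq suboptimal add2} --- which relies on the merely one-sided continuity of $h\mapsto\delta_D(t,h)$, $h\mapsto\delta_E(t,h)$ from Lemma~\ref{lem:ode}, not the full continuity in $h$ you assert --- the sub-/super-optimality principles of Lemma~\ref{lem:optimality}, and a Taylor expansion of the test function along a direction aligned with $-\nabla\phi$. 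Relatedly, claiming outright continuity of $u_D,u_E$ at this stage is too strong: the paper only proves $u_D\in\USC$ and $u_E\in\LSC$, and continuity is recovered after comparison forces $u_D=u_E$. Until you either establish a DPP-type optimality principle or give a complete level-set transfer argument using the equality form of \eqref{dist ODE} (or the correctly oriented interior side of \eqref{dist evolution}), the sub/supersolution step, and hence the theorem, is not proved.
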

As an application of Theorem \ref{thm:control}, we study the fattening phenomenon for 
$h$-level sets for all $h\in\R$. We say that the fattening phenomenon takes place for the solution $u$ of a level set evolution equation at level $h$ if
\begin{equation}\label{initial regular}
\cl\{u(\cdot, t)< h\}=\{u(\cdot, t)\leq h\} \quad \text{and}\quad \{u(\cdot, t)<h\}=\inter\{u(\cdot, t)\leq h\}
\end{equation}
holds for $t=0$ but fails to hold at some $t>0$. 
The level flow is also called regular if \eqref{initial regular} holds for all $t\geq 0$. The occurrence of fattening is first found by Evans and Spruck \cite{ES1} for level set mean curvature flow equation. Consult \cite{BSS, So3, BP, G1, GBook} for other fattening and non-fattening results for various geometric evolution problems without nonlocal terms.

We extend the study of fattening to the nonlocal equation \eqref{E1} by applying our control-theoretic interpretation in Theorem \ref{thm:control}. The following result is about the case when $h\neq \ol{h}$. 
\begin{thm}\label{thm:non-th-fatten} Assume that (A1)--(A4) hold and there exists $\ol{h}\in \R$ satisfying \eqref{h-threshold}. 
Let $u$ be the unique solution to \eqref{E1}, \eqref{initial} satisfying \eqref{eq:growth}.  
Fix $h\in\R\setminus\{\overline{h}\}$ arbitrarily.   
\begin{enumerate}
\item[{\rm(a)}]
If $D_h(0)$ and $E_h(0)$ satisfy 
\begin{equation}\label{cond:nonfat}
 \cl {D}_h(0)=E_h(0) \ \text{when} \ h>\overline{h}, 
\ \
D_h(0)=\inter E_h(0) \ \text{when} \ h<\overline{h},  
\end{equation}
then $\delta_D(t, h) = \delta_E(t,h)$ holds for all $t>0$, 
that is, the fattening phenomenon never happens at $h$-level set, and, for any $t \ge 0$,  
\[ \{x \in \mathbb{R}^n: u(x,t)=h\} = 
\begin{cases}
\partial D_h(\delta_D(t,h)) & \text{if} \; \; h > \overline{h}, \\
\partial E_h(\delta_E(t,h)) & \text{if} \; \; h < \overline{h}.   
\end{cases} \]

\item[{\rm(b)}] 
If $D_h(0)$ and $E_h(0)$ satisfy
\begin{equation}\label{as:fattening}
 \cl {D}_h(0) \subsetneq E_h(0) \ \text{when} \ h>\overline{h}, 
\ \ 
D_h(0) \subsetneq \inter E_h(0) \ \text{when} \ h<\overline{h}, 
\end{equation}
then $\delta_E(t, h) > \delta_D(t, h)$ holds for all $t > 0$. 
\end{enumerate}
\end{thm}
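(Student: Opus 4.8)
Fix $h\neq\ol h$. The definitions \eqref{def:value}--\eqref{def:value2} are arranged so that the case $h<\ol h$ is obtained from $h>\ol h$ by replacing $E_h(0)$ with its complement and $\dist(\cdot,A)$ with distance to the complement, so I would run the argument for $h>\ol h$ and only record the sign changes at the end. For $h>\ol h$, since $\ol h$ is the \emph{unique} constant satisfying \eqref{h-threshold}, one must have $(f_h\circ\mu)(D_h(0))>0$: otherwise, together with $(f_h\circ\mu)(E_h(0))>(f_{\ol h}\circ\mu)(E_{\ol h}(0))\ge 0$ (strict monotonicity of $h\mapsto(f_h\circ\mu)(W_h(0))$ and \eqref{h-threshold}), the level $h$ would itself satisfy \eqref{h-threshold}. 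Using this positivity and the one-sided form of \eqref{dist evolution}, I would first check that $\delta_D(t,h),\delta_E(t,h)>0$ for all $t>0$ (near $t=0$ the right-hand side of \eqref{dist evolution} stays above a positive constant as long as the argument is $\ge0$, so these functions leave $0$ at once and never return). The geometric input is the elementary fact that for any $A\subset\R^n$ and any $s>0$ the level set $\{x:\dist(x,A)=s\}$ is Lebesgue-null ($\dist(\cdot,A)$ is $1$-Lipschitz with $|\nabla\dist(\cdot,A)|=1$ a.e.\ on $\{\dist(\cdot,A)>0\}$, so it cannot carry a positive-measure level set at a positive height). Applying this to $A=\cl D_h(0)$ and $A=E_h(0)$ gives $\mu(\partial D_h(s))=\mu(\partial E_h(s))=0$ for every $s>0$, so $\mu(\inter W_h(s))=\mu(\cl W_h(s))=\mu(W_h(s))$ for $W=D,E$; hence on the region where the argument is positive the differential inclusion \eqref{dist evolution} collapses to the autonomous ODEs
\[
(\delta_D)_t(t,h)=(f_h\circ m_D)(\delta_D(t,h)),\qquad (\delta_E)_t(t,h)=(f_h\circ m_E)(\delta_E(t,h))\qquad(t>0),
\]
where $m_D(s):=\mu(D_h(s))$, $m_E(s):=\mu(E_h(s))$ are non-decreasing, continuous on $(0,\infty)$ by the same null-set fact, and $f_h\circ m_D$, $f_h\circ m_E$ are bounded below on $(0,\infty)$ by $(f_h\circ\mu)(D_h(0))>0$.

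\noindent\emph{Part (a).} If $\cl D_h(0)=E_h(0)$ then for every $s>0$ we have $D_h(s)=\{\dist(\cdot,E_h(0))<s\}$ and $E_h(s)=\{\dist(\cdot,E_h(0))\le s\}$, so $E_h(s)\setminus D_h(s)\subset\{\dist(\cdot,E_h(0))=s\}$ is null and $m_D(s)=m_E(s)$ for all $s>0$. Thus $\delta_D(\cdot,h)$ and $\delta_E(\cdot,h)$ solve the same autonomous ODE $y'=(f_h\circ m_D)(y)$ on $\{y>0\}$ with $y(0)=0$ and $y(t)>0$ for $t>0$; since near $0^+$ the time map $y\mapsto\int_0^y ds/(f_h\circ m_D)(s)$ is finite and strictly increasing (the integrand is bounded and positive), this ODE has a unique such solution, so $\delta_D(t,h)=\delta_E(t,h)$ for all $t>0$. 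Writing $\delta(t):=\delta_D(t,h)=\delta_E(t,h)>0$ and using the identifications $\{u(\cdot,t)<h\}=D_h(\delta_D(t,h))$ and $\{u(\cdot,t)\le h\}=E_h(\delta_E(t,h))$ (contained in the proof of Theorem \ref{thm:control}), the relation $\cl\{\dist(\cdot,E_h(0))<s\}=\{\dist(\cdot,E_h(0))\le s\}$ for $s>0$ yields $\cl\{u(\cdot,t)<h\}=\{u(\cdot,t)\le h\}$ and $\{u(\cdot,t)=h\}=E_h(\delta(t))\setminus D_h(\delta(t))=\partial D_h(\delta(t))$, which is the assertion.

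\noindent\emph{Part (b).} If $\cl D_h(0)\subsetneq E_h(0)$, pick $x_0\in E_h(0)\setminus\cl D_h(0)$ and put $r_0:=\dist(x_0,\cl D_h(0))>0$. A short triangle-inequality computation shows that for each $s\in(0,r_0)$ the ball $\ol B\big(x_0,\tfrac12\min\{s,r_0-s\}\big)$ is contained in $E_h(s)\setminus D_h(s)$; since $\Theta>0$ this gives $m_E(s)>m_D(s)$, hence $(f_h\circ m_E)(s)>(f_h\circ m_D)(s)$ by \eqref{monotone eq}, for every $s\in(0,r_0)$. Comparing the time maps $T_D(s)=\int_0^s ds'/(f_h\circ m_D)(s')$ and $T_E(s)=\int_0^s ds'/(f_h\circ m_E)(s')$, the strict pointwise inequality of the positive integrands on $(0,r_0)$ gives $T_E(s)>T_D(s)$ there; that is, $\delta_E$ reaches each small positive level strictly later than $\delta_D$, so $\delta_E(t,h)>\delta_D(t,h)$ for all small $t>0$. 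To propagate this I would use that $m_E\ge m_D$ on all of $\R$ (because $D_h(0)\subset E_h(0)$ forces $D_h(s)\subset E_h(s)$ for every $s$) and that $f_h\circ m_E$ is non-decreasing, so that whenever $\delta_E(t,h)>\delta_D(t,h)$,
\[
\tfrac{d}{dt}\big(\delta_E(t,h)-\delta_D(t,h)\big)=(f_h\circ m_E)(\delta_E(t,h))-(f_h\circ m_D)(\delta_D(t,h))\ge (f_h\circ m_E)(\delta_E(t,h))-(f_h\circ m_E)(\delta_D(t,h))\ge 0,
\]
whence $\delta_E(\cdot,h)-\delta_D(\cdot,h)$ cannot return to $0$; therefore $\delta_E(t,h)>\delta_D(t,h)$ for every $t>0$.

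\noindent\emph{Sign changes and the main obstacle.} For $h<\ol h$ the flow is contracting: as above one gets $(f_h\circ\mu)(D_h(0)),(f_h\circ\mu)(E_h(0))<0$ from the uniqueness of $\ol h$, so $\delta_D(t,h),\delta_E(t,h)<0$ for $t>0$; the null-set lemma is now applied to $D_h(0)^c$ and $E_h(0)^c$, and the hypotheses \eqref{cond:nonfat}/\eqref{as:fattening} control the gap on the negative axis (in (b) the positive-measure ball is found inside $\inter E_h(0)\setminus D_h(0)$). All comparisons go through verbatim with the inequalities reversed on $\{s<0\}$, and in (a) the level-set formula produces $\partial E_h(\delta(t))$. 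I expect the only genuinely delicate point to be the behavior at $t=0$: there $m_D$ and $m_E$ can actually be discontinuous — the one-sided jumps are $\mu(\partial D_h(0))$ and $\mu(\partial E_h(0))$, which need not vanish — so the facts that $\delta_D,\delta_E$ leave $0$ immediately and never return, and that in (b) they separate immediately even when $\mu(\{u_0=h\})=0$ (so the two ODEs have the same right-hand side value at the discontinuity point), must be argued directly from the sign of $(f_h\circ\mu)(D_h(0))$ and the one-sided structure of \eqref{dist evolution} rather than by invoking ODE uniqueness at $0$. The remaining point to pin down is the sublevel-set/parallel-set identification used in (a), which belongs to the construction behind Theorem \ref{thm:control}.
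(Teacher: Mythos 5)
Your proposal is correct in substance but takes a noticeably different route from the paper's. The paper proves (a) by showing, via the measure identity \eqref{same-cl-in} (deduced from Kraft's inverse isoperimetric inequality \eqref{bdd-boundary}) and the topological identity \eqref{close-D}, that the minimal solution $\delta_D(\cdot,h)$ also solves \eqref{dist ODE} with $W=E$ and vice versa, and then concludes $\delta_D=\delta_E$ by playing minimality against maximality (Proposition \ref{prop:sol-threshold}, the comparison as in Lemma \ref{lem:ode}); for (b) it derives the strict differential inequality \eqref{fattening-dife-t} and excludes a first touching time $T'$ by integrating it. You instead (i) replace the appeal to \eqref{bdd-boundary} by the elementary fact that $\{x:\dist(x,A)=s\}$ is Lebesgue-null for every $s>0$ (correct: $|\nabla\dist(\cdot,A)|=1$ a.e.\ off $A$, while the gradient of a Lipschitz function vanishes a.e.\ on each of its level sets), which under \eqref{cond:nonfat} gives $m_D(s)=\mu(D_h(s))=\mu(E_h(s))=m_E(s)$ away from $s=0$, and (ii) obtain equality, respectively ordering, of $\delta_D,\delta_E$ by inverting the time map $s\mapsto\int_0^s d\sigma/(f_h\circ m_W)(\sigma)$. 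This is legitimate because for $h\neq\ol h$ both functions are strictly monotone with derivative bounded away from zero (your observation that $(f_h\circ\mu)(D_h(0))>0$ for $h>\ol h$ by uniqueness of $\ol h$, giving ``leave $0$ at once and never return''), so the change of variables for monotone Lipschitz functions applies and the possibly discontinuous value of the velocity at $s=0$ never enters; this also settles the $t=0$ issue you flag, and is essentially what the paper's Lemma \ref{lem:reform-ODE} encodes. What your route buys is uniqueness of the solution of \eqref{dist evolution} for each $W$ when $h\neq\ol h$ without invoking the extremal-solution machinery, and a more self-contained treatment of the measure identity; the paper's route is shorter only because \eqref{same-cl-in} and Lemma \ref{lem:reform-ODE} are already available. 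Your identification $E_h(\delta)\setminus D_h(\delta)=\partial D_h(\delta)$ together with Proposition \ref{prop:parallel-surfaces} and \eqref{eq level-set} matches the paper's conclusion of (a).

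One local error to fix in (b): your time-map comparison is stated backwards. Since $m_E>m_D$ on $(0,r_0)$ and \eqref{monotone eq} holds, the integrand $1/(f_h\circ m_E)$ is the \emph{smaller} one, so $T_E(s)<T_D(s)$, i.e.\ $\delta_E$ reaches each small positive level \emph{sooner} than $\delta_D$; that is exactly what yields $\delta_E(t,h)>\delta_D(t,h)$ for small $t>0$. As written ($T_E(s)>T_D(s)$, ``later''), the intermediate claims would imply the opposite inequality, contradicting your (correct) conclusion. The subsequent propagation step -- the derivative of $\delta_E-\delta_D$ is nonnegative while the difference is positive, so by continuity it cannot return to zero -- is sound and plays the role of the paper's contradiction at $T'$. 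With this sign corrected and the symmetric bookkeeping for $h<\ol h$ carried out as you sketch, the proposal constitutes a complete proof.
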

This result can be viewed as a nonlocal adaptation of the non-fattening criterion established by \cite[Theorem 4.1]{BSS} for first order geometric evolutions where the normal velocity does not change sign. However, we remark that for our nonlocal equation the fattening phenomenon may take place in a subset of the space. An example for Theorem \ref{thm:non-th-fatten}(b) is the evolution from $u_0\in C(\R)$ whose $h$-sublevel set $\{x: u_0(x) \le h\}$ consists of a closed interval and an isolated point, as illustrated in Figure \ref{fig:1.1} for the case $h>\ol{h}$. Due to the nonlocal nature of the equation, its solution develops an interior instantaneously not only at the isolated point but also near the boundary of the interval. A more specific example for this phenomenon is presented in Example~\ref{ex partial fattening}.  

\begin{figure}[H]
  \centering
  \includegraphics[height=3.8cm]{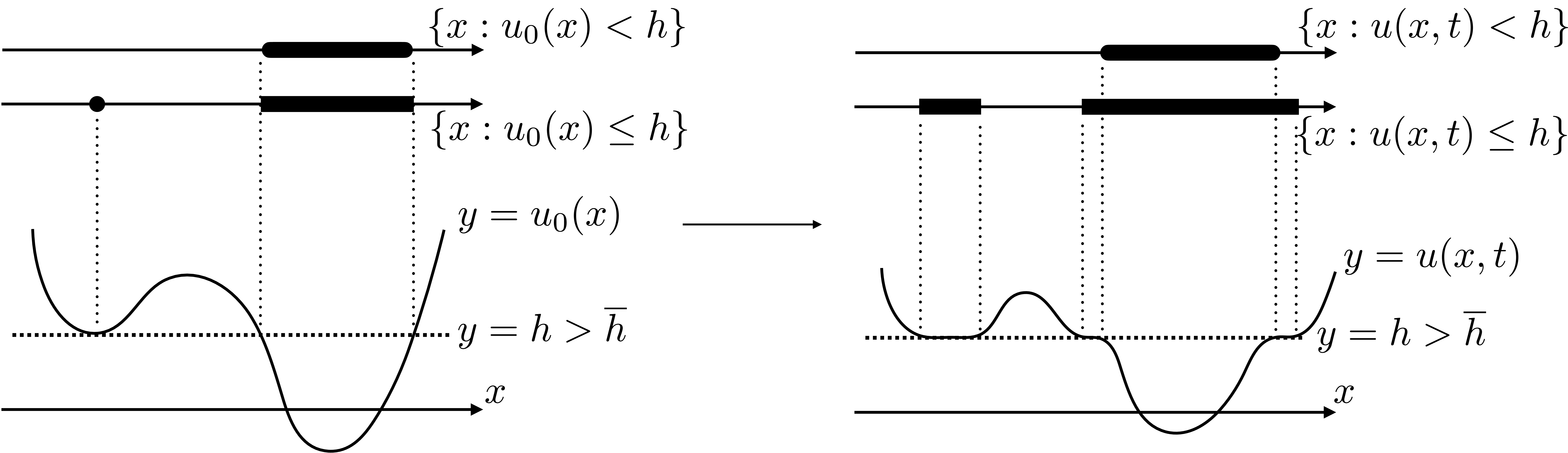}
  \caption{The figure illustrates the behavior of $h$-level set of a solution $u$ when $h > \overline{h}$ and $\{x: u_0(x) \le h\}$ comprises an isolated point and a closed interval.
  }
  \label{fig:1.1}
 \end{figure}



The situation for the critical level $h=\ol{h}$ is more involved. Even if the $\ol{h}$-level set of the solution $u$ satisfies \eqref{initial regular} at $t=0$ and has zero initial normal velocity in the sense that
\[
f(\ol{h}, \mu\{u(\cdot, 0)<\ol{h}\})=f(\ol{h}, \mu\{u(\cdot, 0)\leq \ol{h}\})=0,
\]
it may still have nonempty interior for $t>0$. We construct a fattening example in Example \ref{ex critical fattening} by choosing the initial data to be the set provided in \cite[Example 1]{Kr}. For general discussions on the fattening problem at the critical level $\overline{h}$, it turns out that the rate of the measure growth for sublevel sets of $u_0$ at neighboring levels plays a key role.
Several related fattening and non-fattening conditions are provided in Theorems \ref{prop:fattening-o-h-1}--\ref{prop:fattening-o-h-3}. 


 We finally study the large-time behavior for solutions to \eqref{E1}\eqref{initial}. Compared to the large time asymptotics for Hamilton-Jacobi equations without nonlocal terms, for which we refer to \cite{LMT} and references therein, the case of nonlocal Hamilton-Jacobi equations is less well-studied. For our nonlocal problem \eqref{E1} \eqref{initial}, since we can easily see that 
 $u_t\le 0$ if $u<\overline{h}$, and  $u_t\ge 0$ if $u>\overline{h}$, 
 we expect locally uniform convergence of $u(\cdot, t)$ to $\ol{h}$, that is, 
  \begin{equation}\label{eq:conv}
\sup_{K} |u(\cdot, t)- \overline{h}|\to 0 \quad \text{as} \ t\to\infty \ \text{for any compact set} \ K\subset \R^n, 
 \end{equation}
 which is indeed proved in Proposition \ref{prop:conv} under (A1)--(A4). 
 Our interest is to further obtain finer results on the asymptotic behavior. 
In fact, depending on the initial condition and the location of $x\in\R^n$, 
one may find the following two types of behavior: 
\begin{align*}
& \text{Type I (Finite-time stabilization):} \ \ \text{there exists} \ T>0 \ \text{such that} \ u(x,t)=\overline{h} \ \text{for all} \ t\ge T, \\
&\text{Type II (Infinite-time convergence):} \ \ u(x, t)\to \ol{h} \text{ as $t\to \infty$ but } u(x,t)\neq \ol{h} \ \text{for all} \ t>0.
\end{align*}
 In Proposition \ref{cor:finite-infinite} we prove that the behavior in Type I holds for any $x\in \R^n$ satisfying $u_0(x)<\overline{h}$ if $(f_{\overline{h}}\circ\mu)({D}_{\overline{h}}(0))<0$ and for any $x\in \R^n$ satisfying $u_0(x)>\overline{h}$ if $(f_{\overline{h}}\circ\mu)({E}_{\overline{h}}(0))>0$.
On the other hand, under the assumption that $f(\overline{h}, \cdot)$ is of class ${\rm Lip}_{\rm loc}([0, \infty))$, we show that Type II holds for each $x$ satisfying $u_0(x) < \overline{h}$, Type II holds if 
\begin{equation}\label{type2d}
(f_{\overline{h}} \circ \mu)(D_{\overline{h}}(0)) = 0 \quad \text{and} \quad {\rm Per}(D_{\overline{h}}(-s)) \le C \; \; \text{for} \; \; 0 < s \le s_0 \quad \text{for some $C, s_0 > 0$,}
\end{equation}
 where we denote by ${\rm Per}(A)$ the perimeter of $A\subset\R^n$.
Similarly, for each $x$ satisfying $u_0(x) > \overline{h}$, we have the convergence of Type if 
\begin{equation}\label{type2e} 
(f_{\overline{h}} \circ \mu)(E_{\overline{h}}(0)) = 0 \quad \text{and} \quad {\rm Per}(E_{\overline{h}}(s)) \le C \; \; \text{for} \; \; 0 < s \le s_0\quad \text{for some $C, s_0 > 0$.} 
\end{equation}

 
In addition, we can further establish a convergence rate of $u(\cdot, t)$ to $\overline{h}$ as $t\to\infty$, regardless of the asymptotic types. 

\begin{thm}\label{thm exponent}
Assume that (A1)--(A4) hold and there exists $\ol{h}\in \R$ satisfying \eqref{h-threshold}. Let $u$ be the unique solution to \eqref{E1} and \eqref{initial} satisfying \eqref{eq:growth}. 
Assume in addition that 
 the following two conditions hold: 
\begin{equation}\label{reg initial lower}
 \limsup_{s\to 0+}\frac{f(\ol{h}, \mu(D_{\ol{h}}(-s)))}{s} <0; \hspace{2cm} 
\end{equation}
\begin{equation}\label{reg initial}
\left\{\ 
\begin{aligned}
&\text{For any $b>0$ small,  there exists $a>0$  such that }\\
 & f(h, \mu(E_h(s)))-f(h, \mu(E_h(0)))\geq a s  \quad \text{for all $\ol{h}\leq h\leq \ol{h}+b$, $0\leq s\leq 1/b$.}
  \end{aligned}
  \right.
\end{equation}
Then for any compact set $K\subset \R^n$, there exist $C_K>0$ and $\lambda>0$ such that 
\begin{equation}\label{large time est}
|u(x, t)-\ol{h}|\leq g(C_K e^{-\lambda t})\quad \text{for all $x\in K$ and $t\geq 0$ large,}
\end{equation}
where $g:[0, \infty)\to [0, \infty)$ denotes a modulus of continuity of $u_0$, i.e., $g$ is a continuous, strictly increasing function satisfying $g(0)=0$. 
In particular, if $u_0$ is H\"{o}lder continuous with component $\alpha\in (0, 1]$, then for any compact set $K\subset \R^n$,  there exists $C_K>0$ such that $|u(x, t)-\ol{h}|\leq C_K e^{-\alpha \lambda t}$ for all $x\in K$ and all $t>0$ large.
\end{thm}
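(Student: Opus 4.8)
The plan is to derive everything from the representation formula in Theorem~\ref{thm:control}, combined with two exponential lower bounds on the parallel‑set displacements $-\delta_D(\cdot,h)$ and $\delta_E(\cdot,h)$ at levels $h$ just below, respectively just above, $\ol h$. First I would reformulate the formula geometrically: since $u=u_D=u_E$ by Theorem~\ref{thm:control}, one reads off from \eqref{new upper value}--\eqref{new lower value}, using that (for $r>0$) $D_h(-r)=\{z:\overline{B}(z,r)\subset D_h(0)\}$ is the $r$-erosion of $D_h(0)$ while $E_h(r)$ is the $r$-dilation of $E_h(0)$, that
\[
\{u(\cdot,t)<h\}=D_h(\delta_D(t,h))\ \ (h<\ol h),\qquad \{u(\cdot,t)\le h\}=E_h(\delta_E(t,h))\ \ (h>\ol h),
\]
and therefore $u(x,t)\ge h\iff\dist(x,\{u_0\ge h\})\le-\delta_D(t,h)$ for $u_0(x)<\ol h$, $h<\ol h$, and $u(x,t)\le h\iff\dist(x,\{u_0\le h\})\le\delta_E(t,h)$ for $u_0(x)>\ol h$, $h>\ol h$. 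By \eqref{bounds} it then suffices to estimate $\ol h-u(x,t)\ge0$ when $u_0(x)<\ol h$ and $u(x,t)-\ol h\ge0$ when $u_0(x)>\ol h$, the case $u_0(x)=\ol h$ being trivial.

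The heart of the matter is exponential growth of these displacements near $h=\ol h$, which is where \eqref{reg initial lower} and \eqref{reg initial} are used. Fix $b>0$ small. For $\tau>0$ small and $s\ge0$ with $s+g^{-1}(\tau)\le 1/b$, a segment argument based on $|u_0(w)-u_0(z)|\le g(|w-z|)$ gives the dilation inclusion $E_{\ol h}(s+g^{-1}(\tau))\subset E_{\ol h+\tau}(s)$; together with monotonicity of $\mu$ under inclusion, (A2), \eqref{reg initial} at level $\ol h$, and $(f_{\ol h}\circ\mu)(E_{\ol h}(0))\ge0$ from \eqref{h-threshold}, this yields $(f_{\ol h+\tau}\circ\mu)(E_{\ol h+\tau}(s))\ge a(s+g^{-1}(\tau))$. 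Since $\delta_E(\cdot,\ol h+\tau)\ge0$ solves the closure evolution $(\delta_E)_t=(f_{\ol h+\tau}\circ\mu)(E_{\ol h+\tau}(\delta_E))$ (the set $E_h(s)$ being closed) with zero initial value, ODE comparison gives
\[
\delta_E(t,\ol h+\tau)\ge g^{-1}(\tau)\,(e^{at}-1)\quad\text{while }\ \delta_E(t,\ol h+\tau)+g^{-1}(\tau)\le 1/b.
\]
Symmetrically, an intermediate‑value argument along segments gives the erosion inclusion $D_{\ol h-\tau}(-s)\subset D_{\ol h}(-(s+g^{-1}(\tau)))$, whence by \eqref{reg initial lower} one gets $-(f_{\ol h-\tau}\circ\mu)(D_{\ol h-\tau}(-s))\ge c_0(s+g^{-1}(\tau))$ for small $s$; as $\delta_D(\cdot,\ol h-\tau)<0$ solves the interior evolution $(\delta_D)_t=(f_{\ol h-\tau}\circ\mu)(D_{\ol h-\tau}(\delta_D))$ ($D_{\ol h-\tau}(\delta_D)$ being open), comparison yields $-\delta_D(t,\ol h-\tau)\ge g^{-1}(\tau)(e^{c_0t}-1)$ on the analogous range.

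To finish, put $d_K:=\sup_{x\in K}\dist(x,\{u_0\le\ol h\})$ and $d_K^{\ast}:=\sup_{x\in K}\dist(x,\{u_0\ge\ol h\})$, both finite, and take $b$ with $1/b\ge d_K+d_K^{\ast}+1$. For $x\in K$ with $u_0(x)>\ol h$, since $\dist(x,\{u_0\le\ol h+\tau\})\le d_K$, the reformulation gives $u(x,t)\le\ol h+\tau$ whenever $d_K\le g^{-1}(\tau)(e^{at}-1)$; choosing the least such $\tau$ and using $e^{at}-1\ge\tfrac12e^{at}$ for $t$ large yields $u(x,t)-\ol h\le g(2d_K e^{-at})$. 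The same computation with $-\delta_D$ gives $\ol h-u(x,t)\le g(2d_K^{\ast}e^{-c_0t})$ for $x\in K$ with $u_0(x)<\ol h$ (after a bounded time shift to absorb the passage of $-\delta_D$ past the fixed validity range of \eqref{reg initial lower}). With $\lambda:=\min\{a,c_0\}$ and $C_K:=2(d_K+d_K^{\ast})$, both depending only on $K$ and the data, this is \eqref{large time est}; the H\"older statement follows by taking $g(r)=[u_0]_{\alpha}r^{\alpha}$.

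I expect the main obstacle to be the measure‑theoretic bookkeeping rather than the ODE comparison: \eqref{dist evolution} is a differential \emph{inclusion} reflecting the possible gap between $\mu(\inter W_{h,t})$ and $\mu(\cl W_{h,t})$, so one must verify that the geometric identities for $\{u(\cdot,t)<h\}$ and $\{u(\cdot,t)\le h\}$, and the differential inequalities above, are valid for the particular maximal/minimal solutions fixed in \eqref{def-sol}. The conceptual subtlety is that near $h=\ol h$ the initial velocities $(f_{\ol h\pm\tau}\circ\mu)(W_{\ol h\pm\tau}(0))$ may degenerate to $0$ as $\tau\to0$ — this is exactly the Type~II (infinite‑time) regime — so the exponential growth cannot be started from a fixed positive speed and must instead be bootstrapped from the linear‑in‑$s$ lower bounds \eqref{reg initial} and \eqref{reg initial lower}; it is precisely the transfer of these bounds through the modulus of continuity of $u_0$, via the inclusions $E_{\ol h}(s+g^{-1}(\tau))\subset E_{\ol h+\tau}(s)$ and $D_{\ol h-\tau}(-s)\subset D_{\ol h}(-(s+g^{-1}(\tau)))$, that forces the modulus $g$ to appear in \eqref{large time est}.
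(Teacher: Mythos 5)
Your proposal is correct and takes essentially the same route as the paper, which deduces Theorem \ref{thm exponent} from Propositions \ref{prop exponent-lower} and \ref{prop exponent-upper}: both rest on the representation formula (via Proposition \ref{prop:parallel-surfaces} and the ODEs \eqref{dist ODE} for $\delta_D,\delta_E$), Gronwall-type exponential estimates on the displacements at levels near $\ol{h}$, and a time-dependent choice of level transferred through the modulus of continuity of $u_0$ --- precisely your scheme. The only differences are organizational: the paper packages the lower bound as the barrier $w_-(x,t)=g_-(\dist(x,D_{\ol{h}}(0)^c)e^{-\lambda t})+\ol{h}$ and absorbs the limited validity range of \eqref{reg initial lower} by shrinking $\lambda$ over the whole bounded range $[-\ul{d},0]$ (using boundedness of $D_{\ol{h}}(0)$), whereas you keep the local rate and invoke a bounded time shift (your two-phase step, which is sound and only changes the constant $C_K$), and your dilation/erosion inclusions let you apply \eqref{reg initial} only at $h=\ol{h}$ instead of at all $h\in[\ol{h},\ol{h}+b]$ as the paper does.
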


Our proof of Theorem \ref{thm exponent} is still based on the optimal control interpretation in Theorem \ref{thm:control}. By constructing appropriate barrier functions, we give estimates for the lower bound and upper bound of $u(\cdot, t)-\ol{h}$ separately; 
more details can be found in Proposition \ref{prop exponent-lower} and Proposition \ref{prop exponent-upper}. Under the assumptions (A1)--(A4) and the non-degeneracy condition \eqref{reg initial lower} associated to the initial critical level set, the lower bound can be justified also by an alternative PDE method; in fact, the lower barrier function we construct is indeed a subsolution of \eqref{E1}, which enables us to use the comparison principle to directly deduce the estimate. 

For the upper bound of $u(\cdot, t)-\ol{h}$, one may expect that a similar estimate holds under a symmetric counterpart of \eqref{reg initial lower} such as
\begin{equation}\label{reg initial upper}
 \limsup_{s\to 0+}\frac{f(\ol{h}, \mu(E_{\ol{h}}(s)))}{s} >0,
\end{equation}
but the situation is actually different, due to additional difficulty arising from the unboundedness of $\{u(\cdot, t)\geq \ol{h}\}$. The same PDE method does not seem to work well. We obtain the estimate still by the control-theoretic approach under \eqref{reg initial}, which requires $f$ to satisfy a more restrictive non-degeneracy condition than \eqref{reg initial upper}.  A more specific sufficient condition for \eqref{reg initial} involving quasiconvexity of $u_0$ is given in Proposition \ref{prop reg initial}. 

It is worth remarking that in some special cases the exponent $\lambda>0$ in \eqref{large time est} may be taken independent of the choice of $K$. This improved result is discussed in Remark \ref{rmk improved decay} and a related simple example is presented in Example \ref{ex decay}.

The paper is organized as follows. 
At the beginning of Section \ref{sec:rep}, we recall the definition of viscosity  solutions and the comparison principle for \eqref{E1}. The rest of Section \ref{sec:rep} is devoted to the proof of Theorem \ref{thm:control}. In Section \ref{sec:fat}, we apply Theorem \ref{thm:control} to study the fattening/non-fattening phenomenon, including a proof of Theorem \ref{thm:non-th-fatten}. 
Finally, in Section \ref{sec:large} we investigate the large time asymptotics of solutions to \eqref{E1} \eqref{initial}.

\subsection*{Acknowledgments}
We would like to thank Prof. Keisuke Takasao for bringing the reference \cite{Kr} to our attention. 
TK was partially supported by Japan Society for the Promotion of Science (JSPS) through grants: KAKENHI \#20H01801, \#21H00990, \#23K12992, \#23H00085. 
QL was partially supported by the JSPS grants KAKENHI \#19K03574, \#22K03396.
HM was partially supported by the JSPS grants: KAKENHI
\#22K03382, \#21H04431, \#20H01816, \#19H00639. 

\section{Control-based representation formulas}\label{sec:rep}

\subsection{Preliminaries}\label{sec:pre} 
We first recall the definition of viscosity solutions to \eqref{E1}. Consult \cite{Sl} for the definition of viscosity solutions to more general equations.
\begin{defn}\label{def env-sol}
A locally bounded upper semicontinuous (USC) function $u: \R^n\times (0, \infty)\to \R$ is called a viscosity subsolution of \eqref{E1} if whenever there exist $(x_0, t_0)\in \R^n\times (0, \infty)$ and $\varphi\in C^1(\R^n\times (0, \infty))$ such that $u-\varphi$ attains a local maximum at $(x_0, t_0)$, 
\begin{equation}\label{sub eq}
\varphi_t(x_0, t_0)+|\nabla \varphi(x_0, t_0)|f\big(u(x_0, t_0), \mu(\{u(\cdot, t_0)< u(x_0, t_0)\})\big)\leq 0.
\end{equation}

Symmetrically, a locally bounded lower semicontinuous (LSC) function $u: \R^n\times (0, \infty)\to \R$ is called a viscosity supersolution of \eqref{E1} if whenever there exist $(x_0, t_0)\in \R^n\times (0, \infty)$ and $\varphi\in C^1(\R^n\times (0, \infty))$ such that $u-\varphi$ attains a minimum at $(x_0, t_0)$, 
\[
\varphi_t(x_0, t_0)+|\nabla \varphi(x_0, t_0)|f\big(u(x_0, t_0), \mu(\{u(\cdot, t_0)\leq u(x_0, t_0)\})\big)\geq 0.
\]
A function $u\in C(\R^n\times (0, \infty))$ is called a viscosity solution of \eqref{E1} if it is both a viscosity subsolution and a viscosity supersolution. 
\end{defn}

\begin{thm}[Comparison principle]\label{thm:comparison1}
Let $u \in USC(\R^n \times [0,\infty))$ and $v \in LSC(\R^n \times [0,\infty))$ be, respectively, a subsolution and a supersolution to \eqref{E1}. 
Assume in addition that for any $T>0$, there exists $M_T > 0$ such that 
\[ u(x,t) \le M_T (|x| + 1), \quad v(x,t) \ge - M_T(|x| + 1) \quad \text{for} \; \; (x,t) \in \R^n \times [0,T]. \]
If there exists a modulus of continuity $\omega_0$ such that 
\begin{equation}\label{initial_modulus} 
u(x,0) - v(y,0) \le \omega_0(|x-y|) \quad \text{for} \; \; x,y \in \R^n, 
\end{equation}
then $u \le v$ holds in $\R^n \times [0,\infty)$. 
\end{thm}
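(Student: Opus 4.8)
The plan is to adapt the classical doubling-of-variables argument for first order Hamilton--Jacobi equations to the nonlocal setting, the key new point being how to control the difference of the nonlocal terms $\mu(\{u(\cdot,t_0)<u(x_0,t_0)\})$ and $\mu(\{v(\cdot,s_0)\le v(y_0,s_0)\})$ at the doubling maximum. First I would reduce to a bounded domain: using the linear growth bounds on $u$ and $v$ together with the coercivity of $u_0$, one shows that for fixed $T$ the maxima under consideration are attained in a fixed compact set, and a standard localization penalization $\zeta(x)=\sqrt{|x|^2+1}$ (times a small parameter) forces this. I would also perform the usual time-regularization $u(x,t)-\eta/(T-t)$ to ensure the maximum is interior in time and strict, letting $\eta\to 0$ at the end. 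Then suppose for contradiction that $\sup(u-v)=:\theta>0$, and for $\alpha,\beta>0$ consider
\[
\Phi_{\alpha,\beta}(x,t,y,s)=u(x,t)-v(y,s)-\frac{|x-y|^2}{2\alpha}-\frac{|t-s|^2}{2\beta}-(\text{localization/time penalty}),
\]
with maximizer $(x_\alpha,t_\alpha,y_\alpha,s_\alpha)$. By the standard lemma, $|x_\alpha-y_\alpha|^2/\alpha\to 0$ and $|t_\alpha-s_\alpha|^2/\beta\to 0$, and along a subsequence $x_\alpha,y_\alpha\to \hat x$, $t_\alpha,s_\alpha\to\hat t$ with $u(\hat x,\hat t)-v(\hat x,\hat t)\ge\theta>0$; in particular $t_\alpha,s_\alpha>0$ for $\alpha$ small thanks to \eqref{initial_modulus} and the time penalty.

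The heart of the matter is the Hamiltonian comparison. Writing $p_\alpha=(x_\alpha-y_\alpha)/\alpha$, the viscosity inequalities give
\[
\frac{t_\alpha-s_\alpha}{\beta}+|p_\alpha|\,f\big(u(x_\alpha,t_\alpha),\mu(\{u(\cdot,t_\alpha)<u(x_\alpha,t_\alpha)\})\big)\le 0,\qquad
\frac{t_\alpha-s_\alpha}{\beta}+|p_\alpha|\,f\big(v(y_\alpha,s_\alpha),\mu(\{v(\cdot,s_\alpha)\le v(y_\alpha,s_\alpha)\})\big)\ge 0,
\]
(modulo the derivatives of the penalty terms, which are $O(1)|p_\alpha|$-small or vanish as the parameters go to their limits). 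Subtracting,
\[
|p_\alpha|\Big(f\big(u(x_\alpha,t_\alpha),\mu(A_\alpha)\big)-f\big(v(y_\alpha,s_\alpha),\mu(B_\alpha)\big)\Big)\le 0,
\]
with $A_\alpha=\{u(\cdot,t_\alpha)<u(x_\alpha,t_\alpha)\}$, $B_\alpha=\{v(\cdot,s_\alpha)\le v(y_\alpha,s_\alpha)\}$. Since $u(x_\alpha,t_\alpha)-v(y_\alpha,s_\alpha)\ge\theta>0$, the monotonicity (A2) in the first variable gives $f(u(x_\alpha,t_\alpha),q)\ge f(v(y_\alpha,s_\alpha),q)$; the issue is the nonlocal argument. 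Here I would use monotonicity of $\mu$ together with the following set inclusion, valid precisely because $u(x_\alpha,t_\alpha)>v(y_\alpha,s_\alpha)$ and $u-v$ attains its near-maximum at the doubled points: for $z$ with $v(z,s_\alpha)\le v(y_\alpha,s_\alpha)$ one expects, up to the small error $|x_\alpha-y_\alpha|$ and using $u(z,\cdot)-v(z,\cdot)\le u(x_\alpha,t_\alpha)-v(y_\alpha,s_\alpha)$ plus continuity in time, that $u(z,t_\alpha)<u(x_\alpha,t_\alpha)$; thus $B_\alpha\subset A_\alpha$ in the limit, hence $\mu(B_\alpha)\le\mu(A_\alpha)$, and by (A1)--(A2), $f(v(y_\alpha,s_\alpha),\mu(B_\alpha))\le f(v(y_\alpha,s_\alpha),\mu(A_\alpha))\le f(u(x_\alpha,t_\alpha),\mu(A_\alpha))$. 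This forces the left side above to be $\ge 0$ while it is $\le 0$, so it equals $0$; but then feeding $|p_\alpha|=0$ back into the two viscosity inequalities gives $(t_\alpha-s_\alpha)/\beta\le 0\le (t_\alpha-s_\alpha)/\beta$ plus the penalty derivative in $t$, which (because of the strict time penalty $\eta/(T-t)$) yields a strict inequality $0<0$, a contradiction.

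The main obstacle I anticipate is making the set inclusion $B_\alpha\subset A_\alpha$ (equivalently, the inequality $\mu(B_\alpha)\le\mu(A_\alpha)$) rigorous at finite $\alpha$ rather than only in the limit, since $u$ and $v$ are merely semicontinuous and the sublevel sets are compared at the \emph{different} times $t_\alpha$ and $s_\alpha$ and at shifted points. To handle this I would exploit the absolute continuity of $\mu$ with respect to Lebesgue measure (A3), which makes $\mu$ continuous along monotone and along small perturbations of sets, and I would use that $t_\alpha-s_\alpha\to 0$ together with the modulus of continuity in time of $u$ (obtained a priori from the structure of the equation, or passed to the limit) to absorb the discrepancy; alternatively one can replace the strict sublevel set of $u$ by $\{u(\cdot,t_\alpha)<u(x_\alpha,t_\alpha)+\rho\}$ and the closed sublevel set of $v$ by $\{v(\cdot,s_\alpha)\le v(y_\alpha,s_\alpha)\}$, use the clean inclusion for $\rho>0$, apply $\omega_f$ from (A1) to pay for the $\rho$-gap in $\mu$, and let $\rho\to 0$ after $\alpha\to 0$. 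Once this monotone comparison of the nonlocal terms is secured, the remainder is the routine Crandall--Ishii bookkeeping, and the argument closes as above.
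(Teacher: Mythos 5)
Your overall doubling strategy is the right family of argument, but there is a genuine gap exactly at the point you flag yourself: the comparison of the nonlocal terms. By doubling the time variable you are forced to compare $\{v(\cdot,s_\alpha)\le v(y_\alpha,s_\alpha)\}$ with $\{u(\cdot,t_\alpha)<u(x_\alpha,t_\alpha)\}$ at \emph{different} times, and neither of your proposed repairs closes this. A merely USC subsolution and LSC supersolution have no a priori modulus of continuity in time, so "continuity in time of $u$" is not available; and the $\rho$-gap trick fails because $\bigcap_{\rho>0}\{u(\cdot,t_\alpha)<u(x_\alpha,t_\alpha)+\rho\}=\{u(\cdot,t_\alpha)\le u(x_\alpha,t_\alpha)\}$, so after $\rho\to0$ the $\omega_f$-payment compares $\mu$ of the \emph{closed} sublevel set of $u$ with the open one appearing in the subsolution test, and that discrepancy need not vanish (indeed the whole open/closed distinction in Definition \ref{def env-sol} is the delicate point). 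The paper avoids this entirely by \emph{not} doubling time: it maximizes $\Phi(x,y,t)=u(x,t)-v(y,t)-|x-y|^2/\ep^2-\alpha(|x|^2+|y|^2)-\lambda/(T-t)$ with a single $t$, so that evaluating $\Phi(x,x,\tilde t)\le\Phi(\tilde x,\tilde y,\tilde t)$ yields, at the \emph{same} time $\tilde t$, the clean inclusion $\tilde V_\alpha:=\{v(\cdot,\tilde t)+2\alpha|\cdot|^2\le v(\tilde y,\tilde t)+|\tilde x-\tilde y|^2/\ep^2\}\subset\{u(\cdot,\tilde t)<u(\tilde x,\tilde t)\}$, and then estimates $\mu(V_\alpha\setminus\tilde V_\alpha)\to0$ as $\alpha\to0$ using (A1) and (A3).

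A second, related gap is that your claimed inclusion cannot hold uniformly without a case distinction. From the maximum one only gets $u(z,t_\alpha)\le u(x_\alpha,t_\alpha)-|x_\alpha-y_\alpha|^2/(2\alpha)+(\text{localization error at }z)$ for $z$ in the sublevel set of $v$; the strict inequality you need fails precisely when the penalization term $|x_\alpha-y_\alpha|^2/(2\alpha)$ degenerates (e.g. $x_\alpha=y_\alpha$), and the localization term works against you for large $|z|$. The paper resolves this by splitting into two cases: if $\liminf_{\alpha\to0}|\tilde x-\tilde y|>0$ the level gap $|\tilde x-\tilde y|^2/\ep^2$ stays bounded below and $\mu(V_\alpha\setminus\tilde V_\alpha)\to0$, while if $|\tilde x-\tilde y|\to0$ (with $\ep$ fixed) the gradients $p_1,p_2\to0$ and the boundedness of $f$ in (A1) alone gives the contradiction with the strictly positive term $\lambda/(T-\tilde t)^2$ coming from the time penalization — no comparison of the nonlocal terms is needed there. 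Your proposal has no analogue of this dichotomy, and your final contradiction step is also garbled: from $|p_\alpha|\bigl(f(u,\mu(A_\alpha))-f(v,\mu(B_\alpha))\bigr)\le 0$ one cannot conclude $|p_\alpha|=0$; the correct mechanism is to keep the strictly positive difference of the time derivatives on the left throughout (as in \eqref{con-hk} of the paper) and contradict it with the nonpositive right-hand side. Finally, your reduction to a compact set invokes coercivity of $u_0$, which is not among the hypotheses of the theorem; the paper instead derives the linear-in-$|x-y|$ growth bound \eqref{ineq:growth} to guarantee the penalized maximum is attained.
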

See Appendix \ref{sec:app} for the proof. A comparison result for more general first order nonlocal equations can be found in \cite{KLM-rims}.

\subsection{Maximal and minimal solutions of parallel set evolutions}

Let us make preparations for our proof of Theorem \ref{thm:control}. 
We recall that $D_h(s)$ and $E_h(s)$ are the parallel sets of $D_h(0), E_h(0)$ defined by \eqref{parallel set}. 
It is clear that $D_h(s)$ is open, $E_h(s)$ is closed, and $D_h(s)\subset E_h(s)$ for all $s\in \mathbb{R}$ and  $h\in \R$. 
Also, we note that in view of (A2) we have the monotonicity 
\begin{equation}\label{eq:mono}
\cl{D_{h_1}}(s)\subset D_{h_2}(s), 
\quad\text{and} \quad 
E_{h_1}(s)\subset \inter E_{h_2}(s) \quad
\text{if} \ h_1<h_2. 
\end{equation}

Let us first mention the result \cite[Theorem 3]{Kr}, which states that there exists a constant $C>0$ such that
\begin{equation}\label{bdd-boundary} 
{\rm Per}(\{x \in \mathbb{R}^n: {\rm dist} \; (x, A) < s\}) \le \frac{C}{s} m(\{x \in \mathbb{R}^n: 0 < {\rm dist} (x, A) < s\}) 
\end{equation}
holds for any $s>0$ and any subset $A \subset \mathbb{R}^n$. Here ${\rm Per}(A)$ denotes the perimeter of $A$. This can be viewed as a certain inverse isoperimetric inequality. Applying (A3), we then have 
\begin{equation}\label{same-cl-in} 
\mu(\cl({W_h(s)})) = \mu(\inter(W_h(s))) \quad \text{for} \; h\in \R, \; s\neq 0, \; W=D, E.  
\end{equation}
We use this relation to show that any solution of  \eqref{dist evolution} actually solves \eqref{dist ODE} for $h \neq \overline{h}$. 

\begin{lem}\label{lem:reform-ODE}
Assume that (A1)--(A4) hold and there exists $\ol{h}\in \R$ satisfying \eqref{h-threshold}.
For any $h\in\R\setminus\{\overline{h}\}$, let $\delta_W(\cdot, h)\in\Lip([0,\infty))$ be any function satisfying 
\eqref{dist evolution} for $W=D$ or $W=E$.  
Then, $\delta_W(\cdot, h)$ is of $C^1((0,\infty))$ and solves \eqref{dist ODE}.
Moreover, $\delta_W(\cdot, h)$ is strictly increasing if $h> \overline{h}$, and 
$\delta_W(\cdot,h)$ is strictly decreasing if $h<\overline{h}$. 
\end{lem}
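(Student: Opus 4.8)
The plan is to exploit the inverse isoperimetric inequality \eqref{bdd-boundary} together with (A3) to upgrade the differential \emph{inequality} \eqref{dist evolution} to the differential \emph{equation} \eqref{dist ODE}, and then to use assumption \eqref{h-threshold} and the strict monotonicity in (A2) to pin down the sign of $(\delta_W)_t$.

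First I would observe that by \eqref{same-cl-in} we have $\mu(\cl W_h(s)) = \mu(\inter W_h(s))$ for every $s\neq 0$. So the key point is to show that $\delta_W(s,h)\neq 0$ for $s>0$, i.e.\ that the parallel-set evolution moves away from $s=0$ immediately and never returns. This is exactly where \eqref{h-threshold} and (A2) enter. Fix $h>\ol h$ (the case $h<\ol h$ being symmetric). Since $s\mapsto (f_h\circ\mu)(W_h(0))$ is the value at $h$ of the strictly increasing map $h\mapsto(f_h\circ\mu)(W_h(0))$, and since by \eqref{h-threshold} this map is $\le 0$ at $\ol h$ for $W=D$ and $\ge 0$ at $\ol h$ for $W=E$, strict monotonicity gives $(f_h\circ\mu)(W_h(0))>0$ for $h>\ol h$ for both $W=D$ and $W=E$. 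By (A1) (continuity of $f$) and the continuity of $s\mapsto\mu(W_h(s))$ (which follows from $\Theta\in L^1$ and dominated convergence, since $W_h(s)$ varies continuously in measure), there is a neighborhood of $s=0$ on which $(f_h\circ\mu)(\inter W_{h,t})>0$; hence from the lower bound in \eqref{dist evolution}, $(\delta_W)_t(t,h)>0$ for $t$ near $0$, so $\delta_W(t,h)>0$ for small $t>0$. A standard continuation/bootstrap argument then shows $\delta_W(t,h)>0$ for \emph{all} $t>0$: if $t_\ast:=\inf\{t>0:\delta_W(t,h)\le 0\}$ were finite, by continuity $\delta_W(t_\ast,h)=0$ while $\delta_W(t,h)>0$ on $(0,t_\ast)$, and using again $(f_h\circ\mu)(W_h(0))>0$ and the measure-continuity, $(\delta_W)_t>0$ in a left-neighborhood of $t_\ast$, forcing $\delta_W(t_\ast,h)>0$, a contradiction. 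Therefore $\delta_W(t,h)\neq 0$ for all $t>0$.

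With $\delta_W(t,h)\neq 0$ for $t>0$, \eqref{same-cl-in} applies at $s=\delta_W(t,h)$ and collapses the upper and lower bounds in \eqref{dist evolution}: $(f_h\circ\mu)(\inter W_{h,t})=(f_h\circ\mu)(\cl W_{h,t})=(f_h\circ\mu)(W_{h,t})$ for a.e.\ $t>0$, which is precisely \eqref{dist ODE} holding a.e. For the $C^1$ regularity, I would note that the right-hand side $t\mapsto(f_h\circ\mu)(W_h(\delta_W(t,h)))$ is continuous on $(0,\infty)$: $\delta_W$ is Lipschitz hence continuous, $s\mapsto\mu(W_h(s))$ is continuous, and $f$ is continuous; an absolutely continuous function whose a.e.\ derivative agrees with a continuous function is $C^1$, with that derivative everywhere. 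This also yields $(\delta_W)_t(t,h)=(f_h\circ\mu)(W_{h,t})$ for every $t>0$.

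Finally, the monotonicity of $\delta_W(\cdot,h)$: for $h>\ol h$ we have shown $\delta_W(t,h)>0$ for $t>0$, so $W_h(\delta_W(t,h))\supset W_h(0)$, and by monotonicity of $\mu$ (it is a positive measure) and (A2) applied to the strictly increasing variable $q$, $(f_h\circ\mu)(W_{h,t})\ge (f_h\circ\mu)(W_h(0))>0$; hence $(\delta_W)_t(t,h)>0$ for all $t>0$, so $\delta_W(\cdot,h)$ is strictly increasing. The case $h<\ol h$ is symmetric: $(f_h\circ\mu)(W_h(0))<0$, $\delta_W(t,h)<0$ for $t>0$, the sublevel/parallel sets shrink, $(f_h\circ\mu)(W_{h,t})\le(f_h\circ\mu)(W_h(0))<0$, and $\delta_W(\cdot,h)$ is strictly decreasing. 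I expect the main obstacle to be the continuation argument showing $\delta_W(t,h)$ never returns to $0$ — making rigorous the claim that once the parallel set has strictly positive (resp.\ negative) signed-distance parameter, the velocity $(f_h\circ\mu)(W_{h,t})$ stays bounded away from $0$ with the right sign, uniformly enough to prevent $\delta_W$ from coming back. Here one uses that $(f_h\circ\mu)(W_h(s))$ is monotone in $s$ (because $s\mapsto\mu(W_h(s))$ is monotone and $f_h$ is increasing), so in fact $(f_h\circ\mu)(W_{h,t})\ge(f_h\circ\mu)(W_h(0))>0$ as soon as $\delta_W(t,h)\ge 0$, which makes the bootstrap immediate rather than delicate.
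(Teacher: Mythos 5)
Your overall strategy is the same as the paper's: reduce everything to showing $\delta_W(t,h)\neq 0$ for $t>0$, use \eqref{same-cl-in} (the consequence of Kraft's inequality \eqref{bdd-boundary}) to collapse the differential inequality \eqref{dist evolution} into the ODE \eqref{dist ODE}, obtain continuity of $t\mapsto (f_h\circ\mu)(W_{h,t})$ from monotone set limits together with \eqref{same-cl-in} (this is the paper's computation \eqref{cont-wrt-set}), and conclude $C^1$ regularity and strict monotonicity. Most of the proposal is sound.

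One justification, however, does not work as written and needs the paper's comparison to repair it. For $W=D$ and $h>\ol{h}$ you assert $(f_h\circ\mu)(D_h(0))>0$ because the map $h\mapsto(f_h\circ\mu)(D_h(0))$ is strictly increasing and, by \eqref{h-threshold}, is $\le 0$ at $\ol{h}$; but that sign points the wrong way: strict monotonicity in $h$ only gives $(f_h\circ\mu)(D_h(0))>(f_{\ol{h}}\circ\mu)(D_{\ol{h}}(0))$, and the right-hand side is $\le 0$, so no positivity follows. The correct route — which is exactly what the paper does via the inclusion $\inter W_{h,t}\supset E_{\ol{h}}(0)$ — is to compare with the $E$-half of \eqref{h-threshold}: since $D_h(0)=\{u_0<h\}\supset E_{h'}(0)\supset E_{\ol{h}}(0)$ for $\ol{h}<h'<h$ (cf. \eqref{eq:mono}), one gets $(f_h\circ\mu)(D_h(0))\ge (f_{h'}\circ\mu)(E_{h'}(0))>(f_{\ol{h}}\circ\mu)(E_{\ol{h}}(0))\ge 0$, the strict step coming from the strictly larger measure and \eqref{monotone eq}. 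Since your continuation argument and the final strict-monotonicity paragraph both rest on the positivity of $(f_h\circ\mu)(W_h(0))$, this fix should be inserted there as well. A minor additional point: $s\mapsto\mu(W_h(s))$ is in general only one-sidedly continuous at $s=0$ (e.g.\ $\mu(D_h(s))\downarrow\mu(\cl{D_h(0)})$ as $s\downarrow 0$), so your appeal to its "continuity" near $0$ should be replaced by monotonicity in $s$ plus the appropriate one-sided limit; this is cosmetic and does not affect the argument once the positivity step is corrected.
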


%

\begin{proof}
We only consider the case $h>\overline{h}$ as we can prove similarly in the case $h<\overline{h}$. We adopt the notations $f_h$ and $W_{h, t}$ as given in \eqref{abbrev}.
By the monotonicity \eqref{eq:mono},  
we have $\inter E_h(0)\supset E_{\overline{h}}(0)$. 
Thus, by the continuity of $\delta_W(t, h)$ in $t \ge 0$, there exists $\ep > 0$ such that 
\[ 
\inter W_{h, t}=\inter E_h(\delta_{W}(t,h)) \supset E_{\overline{h}}(0) \quad \text{for} \; \; 0 \le t < \ep. 
\]
Since $f_h$ is strictly increasing, we get
\[ 
0\le 
(f_h \circ \mu)(E_{\overline{h}}(0))< (f_h \circ \mu)(\inter W_{h,t}) \leq (\delta_W)_t(t,h), 
\]
which implies that 
$\delta_W(\cdot,h)$ is strictly increasing. 

Noticing that $\delta_W(t, h) > 0$ for any $t>0$, by \eqref{same-cl-in}, we see that $\delta_W(\cdot,h)$ solves \eqref{dist ODE} for $t>0$. 
In light of the continuity of the measure, we have, for $t>0$,
\begin{equation}\label{cont-wrt-set}
\begin{aligned} 
& \ \lim_{\varepsilon \to 0+}  (f_h \circ \mu)(W_{h, t-\ep}) =\; (f_h \circ \mu)\left(\bigcup_{\varepsilon > 0} W_h(\delta_W(t-\varepsilon), h))\right) = (f_h \circ \mu)(\inter W_{h,t}) \\
&\qquad \  = (f_h \circ \mu)(\cl{W_{h,t}}) = (f_h \circ \mu)\left(\bigcap_{\varepsilon > 0} W_h(\delta_W(t +\varepsilon, h))\right)  = \lim_{\varepsilon \to 0+} (f_h \circ \mu)(W_{h, t+\ep})  
\end{aligned}
\end{equation}
 owing to the monotonicity of $\delta_W(\cdot, h)$ and \eqref{same-cl-in},  
which implies the continuity of the right hand side of \eqref{dist ODE}, and thus $\delta_W(\cdot, h)$ is of $C^1((0,\infty))$. 
\end{proof}

We next study the existence of solutions to \eqref{dist ODE}.

\begin{prop} \label{prop:unique-ODE} 
Assume that (A1)--(A4) hold. Then there exists a solution $\delta_W(\cdot,h) \in C^1([0,\infty))$ to \eqref{dist ODE} for 
any $h\in\R$, $W=D, E$. 
\end{prop}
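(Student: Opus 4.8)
The plan is to recast \eqref{dist ODE} as the scalar autonomous Cauchy problem $\delta'=g(\delta)$, $\delta(0)=0$ for the single function $g(s):=(f_h\circ\mu)(W_h(s))=f(h,\mu(W_h(s)))$, $s\in\R$, and then to exhibit an explicit monotone solution. First I would record the structure of $g$. By (A1) it is bounded, $|g|\le\|f\|_{L^\infty}$. Since $s\mapsto W_h(s)$ is nondecreasing for inclusion and $f(h,\cdot)$ is nondecreasing by (A2), $g$ is nondecreasing, so it has one-sided limits everywhere; moreover, because $D_h(0)$ is open and $E_h(0)$ is closed, $g(0^-)=g(0)$ when $W=D$ and $g(0^+)=g(0)$ when $W=E$. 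The key point is that $g$ is continuous on $\R\setminus\{0\}$: for $s\neq0$ the two one-sided limits of $s'\mapsto\mu(W_h(s'))$ at $s$ equal $\mu(\inter W_h(s))$ and $\mu(\cl W_h(s))$, which coincide by \eqref{same-cl-in} (this is exactly where the inverse isoperimetric inequality \eqref{bdd-boundary}, together with (A3), is used), and continuity of $f$ in its second variable then transfers to $g$. At $s=0$, however, $g$ may jump upward by an amount controlled by $\mu(\partial W_h(0))$, so no off-the-shelf ODE existence theorem applies directly.

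Next I would distinguish three cases according to the sign of $g(0)$. If $g(0)=0$, then $\delta_W(\cdot,h)\equiv 0$ is trivially a $C^1$ solution of \eqref{dist ODE}. If $g(0)>0$, then $g(0^+)\ge g(0)>0$ (using $g(0^+)\ge g(0^-)=g(0)$ when $W=D$, and $g(0^+)=g(0)$ when $W=E$), hence $g\ge g(0^+)>0$ on $(0,\infty)$ by monotonicity, and I set $\Phi(\sigma):=\int_0^\sigma ds/g(s)$ for $\sigma\ge0$. The bounds $\sigma/\|f\|_{L^\infty}\le\Phi(\sigma)\le\sigma/g(0^+)$ show that $\Phi$ is a strictly increasing homeomorphism of $[0,\infty)$ onto itself, and since $g$ is continuous and positive on $(0,\infty)$, $\Phi$ is $C^1$ there with $\Phi'=1/g$. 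Defining $\delta_W(\cdot,h):=\Phi^{-1}$, the inverse function theorem gives $\delta_W(\cdot,h)\in C^1((0,\infty))$ with $\delta_W'(t,h)=g(\delta_W(t,h))$, which is precisely \eqref{dist ODE} for $t>0$, and $\delta_W(0,h)=0$. The case $g(0)<0$ is entirely analogous, producing a nonincreasing solution: now $g\le g(0^-)<0$ on $(-\infty,0)$, and one inverts $\sigma\mapsto\int_\sigma^0 ds/(-g(s))$ on $(-\infty,0]$.

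It then remains, in the cases $g(0)\neq0$, to check that $\delta_W(\cdot,h)$ is of class $C^1$ up to $t=0$, which is immediate: $\delta_W(\cdot,h)$ is continuous on $[0,\infty)$ and differentiable on $(0,\infty)$ with $\delta_W'(t,h)=g(\delta_W(t,h))$; as $t\to0+$ we have $\delta_W(t,h)\to0$ monotonically, so $\delta_W'(t,h)\to g(0^+)$ (resp.\ $g(0^-)$), and the mean value theorem forces $\delta_W(\cdot,h)$ to be differentiable at $0$ with that value of the derivative, so $\delta_W'(\cdot,h)\in C([0,\infty))$. The real obstacle throughout is the discontinuity of $g$ at the origin: the argument succeeds only because \eqref{dist ODE} is imposed solely for $t>0$ --- where a monotone solution stays away from $0$, so that $g\circ\delta_W$ is continuous there --- and because \eqref{same-cl-in}, a consequence of Krantz's estimate \eqref{bdd-boundary}, restores continuity of $g$ on $\R\setminus\{0\}$. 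Note that only (A1)--(A4) are used, not the existence of $\ol h$.
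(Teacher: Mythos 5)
Your proof is correct, but it takes a genuinely different route from the paper's. The paper handles the possible discontinuity of $s\mapsto (f_h\circ\mu)(W_h(s))$ at $s=0$ by replacing its values for $s\le 0$ with the constant $(f_h\circ\mu)(\cl{W_h(0)})$, so that the modified right-hand side $\tilde f$ is continuous everywhere; it then invokes the Peano existence theorem and uses \eqref{h-threshold}, splitting into the cases $h>\ol h$, $h<\ol h$, $h=\ol h$ with $W=D$ or $W=E$, to argue that the Peano solution is monotone and therefore never re-enters the region where the modification matters, whence it solves \eqref{dist ODE}. You instead bypass Peano entirely: you exploit that $g(s)=(f_h\circ\mu)(W_h(s))$ is bounded, nondecreasing, and continuous off $s=0$ (via \eqref{same-cl-in}, i.e.\ the estimate \eqref{bdd-boundary}), split according to the sign of $g(0)$ rather than the position of $h$ relative to $\ol h$, and construct the solution explicitly by inverting the time map $\Phi(\sigma)=\int_0^\sigma ds/g(s)$ (resp.\ its mirror for $g(0)<0$), checking $C^1$ regularity up to $t=0$ by the mean value theorem. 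What your approach buys: it uses only (A1)--(A4), exactly as the proposition is stated, whereas the paper's proof additionally invokes the constant $\ol h$ of \eqref{h-threshold}; it also produces the solution (and its monotonicity) explicitly rather than abstractly. What the paper's approach buys is brevity, since continuity of $\tilde f$ plus Peano does most of the work in two lines. One cosmetic point: at $s\neq 0$ the decreasing limit of the sets $W_h(s')$ as $s'\downarrow s$ is the sublevel/superlevel set $\{\sd\le s\}$ (resp.\ for $E$), which may be strictly larger than $\cl{W_h(s)}$ as a set; it has the same $\mu$-measure because the level set of the (signed) distance function at a nonzero level is Lebesgue-null, which is in effect what \eqref{same-cl-in} encodes -- the paper's own computation \eqref{cont-wrt-set} is phrased at the same level of informality, so this does not affect correctness.
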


\begin{proof}
Let $\overline{h} \in \mathbb{R}$ be the constant satisfying \eqref{h-threshold}. 
We first consider the case where $h > \overline{h}$ with $W=D$ or $W=E$. 
Define the function $\tilde{f}: \R \to \R$ by 
\[ 
\tilde{f}(s) := 
\begin{cases}
(f_h \circ \mu) (W_h(s)) & \text{if} \; \; s > 0, \\
(f_h \circ \mu) (\cl{W_h(0)}) & \text{if} \; \; s \le 0. 
\end{cases} 
\]
By the same argument as in \eqref{cont-wrt-set} 
we obtain the continuity of $\tilde{f}$. 
Therefore, in light of the Peano existence theorem for ordinary differential equations, 
and the boundedness of $\tilde{f}$, 
there exists a solution $\delta_W(\cdot, h) \in C^1([0, \infty))$ to 
\[ 
(\delta_W)_t (t, h) = \tilde{f}(\delta_W(t,h)) \quad \text{for} \; \; t > 0 \quad \text{with} \; \; \delta_W(0, h) = 0. 
\]
Due to \eqref{h-threshold}, it is obvious that $\delta_W(\cdot, h)$ is strictly increasing, and thus $\delta_W(\cdot,h)$ is a solution to \eqref{dist ODE} by the definition of $\tilde{f}$.  The same proof above also works for the case when $h=\ol{h}$ and $W=E$. 

By symmetry, we can prove the existence of solutions to \eqref{dist ODE} 
in the remaining cases that $h < \overline{h}$ with $W=D, E$ as well as $h=\overline{h}$ with $W=D$.
\end{proof}

It is clear that any solution to \eqref{dist ODE} satisfies \eqref{dist evolution}. 
However, in the case $h=\overline{h}$, a function satisfying \eqref{dist evolution} may not be a solution to \eqref{dist ODE}, and solutions to \eqref{dist evolution} may not be unique in general. 
Therefore, we introduce the maximal and minimal solutions for \eqref{dist evolution}.

\begin{prop} [Maximal and minimal solutions to \eqref{dist evolution}] \label{prop:sol-threshold} 
Assume that (A1)--(A4) hold and there exists $\ol{h}\in \R$ satisfying \eqref{h-threshold}. There exist a unique Lipschitz  maximal solution $\ol{\delta}_W$ and a unique Lipschitz minimal solution $\underline{\delta}_W(\cdot, h)$ to \eqref{dist evolution}, that is, %
\[
\underline{\delta}_W(t, h) \le \delta_W(t, h) \le  \overline{\delta}_W(t, h)
\]
for all $t \ge 0$ and any solution $\delta_W(\cdot, h)$ to \eqref{dist evolution}. 
Moreover, in the case $h= \overline{h}$, where $\overline{h}$ is the constant satisfying \eqref{h-threshold}, $\underline{\delta}_D(\cdot, \overline{h})$ is nonincreasing and $\overline{\delta}_E(\cdot, \overline{h})$ is nondecreasing. 
\end{prop}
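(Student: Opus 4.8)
The plan is to construct the maximal solution $\ol\delta_W$ as the pointwise supremum of all solutions to \eqref{dist evolution} and verify that this supremum is itself a solution; the minimal solution $\underline\delta_W$ is handled symmetrically (or by the reflection $s\mapsto -s$). First I would record that, by Proposition \ref{prop:unique-ODE}, the solution set of \eqref{dist evolution} is nonempty, and that every solution is Lipschitz with a uniform constant, namely $\|f_h\circ\mu\|_\infty \le \|f\|_\infty$ by (A1); hence the family is equi-Lipschitz and, since all members vanish at $t=0$, uniformly bounded on compact time intervals. This makes the pointwise supremum $\ol\delta_W(t,h):=\sup\{\delta_W(t,h):\delta_W \text{ solves }\eqref{dist evolution}\}$ finite and Lipschitz with the same constant. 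For $h\neq\ol h$, Lemma \ref{lem:reform-ODE} already pins down the solution more rigidly: every solution solves the genuine ODE \eqref{dist ODE} with a continuous right-hand side (continuity coming from \eqref{same-cl-in} and the monotonicity argument in \eqref{cont-wrt-set}) and is strictly monotone, so the existence of maximal and minimal solutions in that case is the classical Peano envelope statement; the only delicate case is $h=\ol h$.

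For $h=\ol h$ I would argue as follows. Take the case $W=D$ and show $\underline\delta_D(\cdot,\ol h)$ is nonincreasing: by \eqref{h-threshold}, $(f_{\ol h}\circ\mu)(D_{\ol h}(0))\le 0$, and for $s<0$ we have $D_{\ol h}(s)\subset D_{\ol h}(0)$, so by monotonicity of $\mu$ and (A2) the right-hand side bound $(f_h\circ\mu)(\cl{W_{h,t}})\le (f_{\ol h}\circ\mu)(\cl{D_{\ol h}(0)})$; combined with \eqref{same-cl-in} (valid for $s\neq 0$) this forces $(\delta_D)_t\le 0$ wherever $\delta_D<0$, and a short argument using $\delta_D(0)=0$ shows the solution can never become positive (if it did, the interior bound would be strictly positive only if it stays below some level, but at the first time it returns to $0$ the derivative inequality pushes it back down; more carefully, one shows $\{t:\delta_D(t,\ol h)\le 0\}$ is forward-invariant). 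Hence $\delta_D(\cdot,\ol h)\le 0$ everywhere and is nonincreasing; symmetrically $\delta_E(\cdot,\ol h)\ge 0$ and is nondecreasing. The monotonicity is what makes the envelope construction work at the critical level: once all solutions are known to be nonincreasing (resp.\ nondecreasing) and to stay on one side of $0$, the right-hand sides $(f_{\ol h}\circ\mu)(\inter W_{h,t})$ and $(f_{\ol h}\circ\mu)(\cl{W_{h,t}})$ become, along any such solution, monotone functions of $t$, and \eqref{same-cl-in} collapses the two-sided inequality to an equation for $t>0$ exactly as in Lemma \ref{lem:reform-ODE}, except possibly at the single instant where $\delta_W=0$.

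To see that the supremum $\ol\delta_W$ is a solution, I would use a standard comparison/concatenation argument: given any two solutions, their pointwise maximum is again a subsolution of the upper inequality and a supersolution of the lower one — here one exploits that $s\mapsto W_h(s)$ is monotone (increasing in $s$) so that at a point where two Lipschitz solutions cross, the one with the larger value has the larger admissible derivative band, allowing the max to be "spliced" into a genuine solution on each side. Passing to the supremum over the whole (equi-Lipschitz, hence precompact in $C_{\rm loc}$) family, Arzelà--Ascoli gives a uniform limit along a sequence realizing the sup at a countable dense set of times; the limit is Lipschitz, and one checks it satisfies \eqref{dist evolution} a.e.\ by passing to the limit in the integrated form $\delta(t)=\int_0^t (\delta)_t\,ds$ together with the continuity of $(f_h\circ\mu)$ on sublevel/superlevel parallel sets established via \eqref{cont-wrt-set}. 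Maximality is immediate from the construction; uniqueness of the maximal (resp.\ minimal) solution is automatic.

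The main obstacle I anticipate is the behavior at the single time $t_0$ where a critical-level solution $\delta_W(\cdot,\ol h)$ touches $0$: there \eqref{same-cl-in} does not apply, $\mu(\inter D_{\ol h}(0))$ and $\mu(\cl{D_{\ol h}(0)})$ may genuinely differ, and one must show the differential inequality \eqref{dist evolution} at $t_0$ does not obstruct gluing and does not spoil maximality. The monotonicity conclusions of the proposition are precisely the tool that confines this ambiguity to (at most) one instant and to the "correct" side, so that the envelope is still well-defined and Lipschitz; carefully verifying that $\ol\delta_E(\cdot,\ol h)$ cannot dip below $0$ and that the max-of-two-solutions is admissible across $t_0$ is the part that needs the most care, and I would isolate it as a short lemma.
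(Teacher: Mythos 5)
Your overall architecture---defining $\ol{\delta}_W$ and $\ul{\delta}_W$ as the pointwise supremum/infimum over the (nonempty, equi-Lipschitz with constant $\sup|f|$) family of solutions of \eqref{dist evolution} and then checking that the envelope is itself a solution---is the same as the paper's. The genuine gap is in your treatment of the only delicate case, $h=\ol{h}$. You claim that \emph{every} solution of \eqref{dist evolution} at the critical level stays on one side of $0$ (nonpositive and nonincreasing for $W=D$, nonnegative and nondecreasing for $W=E$), and you make this the engine of the critical-level argument (``once all solutions are known to be nonincreasing \dots the two-sided inequality collapses to an equation except at a single instant''). That claim is false in general: \eqref{h-threshold} controls $f_{\ol{h}}(\mu(D_{\ol{h}}(0)))$ and $f_{\ol{h}}(\mu(E_{\ol{h}}(0)))$, but not $f_{\ol{h}}(\mu(\cl{D_{\ol{h}}(0)}))$ or $f_{\ol{h}}(\mu(\inter E_{\ol{h}}(0)))$. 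If $\partial\{u_0<\ol{h}\}$ has positive Lebesgue measure (a fat-Cantor-type boundary is compatible with (A4)), one can have $f_{\ol{h}}(\mu(\cl{D_{\ol{h}}(0)}))>0$ while \eqref{h-threshold} holds, and then $\delta'=(f_{\ol{h}}\circ\mu)(D_{\ol{h}}(\delta))$ admits a strictly increasing solution leaving $0$, which satisfies \eqref{dist evolution} for $W=D$ since for $t>0$ the two bounds coincide by \eqref{same-cl-in}; symmetrically, strictly decreasing negative solutions for $W=E$ exist when $f_{\ol{h}}(\mu(\inter E_{\ol{h}}(0)))<0$. This two-sided nonuniqueness at $\ol{h}$ is exactly what the proposition is meant to organize, and the statement asserts one-sidedness only for the extremal pair $\ul{\delta}_D(\cdot,\ol{h})$, $\ol{\delta}_E(\cdot,\ol{h})$. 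Your forward-invariance argument for $\{\delta_D\le 0\}$ fails precisely at an instant where $\delta_D=0$, because there the admissible upper bound is $f_{\ol{h}}(\mu(\cl{D_{\ol{h}}(0)}))$, which may be strictly positive.

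Because of this, your verification that the supremum/infimum solves \eqref{dist evolution} is not justified at $h=\ol{h}$: the limit passage you describe appeals to continuity of $s\mapsto (f_h\circ\mu)(W_h(s))$ via \eqref{cont-wrt-set}, which fails exactly at $s=0$ when $h=\ol{h}$ (this is why \eqref{dist evolution} is an inequality rather than the ODE \eqref{dist ODE}), and your substitute for that continuity is the false one-sidedness claim. The paper needs no sign information at this stage: to show, say, that $\ul{\delta}_W$ is a solution, it picks for $t$ near $s$ a solution $\delta^\ast$ with $\delta^\ast(s)\le \ul{\delta}_W(s)+\vep^2$, uses the integrated inequality on $[s,t]$, the uniform Lipschitz bound, and semicontinuity of $\mu$ along monotone limits of $\cl{W_h(\cdot)}$ and $\inter W_h(\cdot)$ (together with a monotone-sequence argument) to get both differential inequalities at a.e.\ $t$; the monotonicity assertions are then obtained separately from $\ul{\delta}_D(\cdot,\ol{h})\le 0\le \ol{\delta}_E(\cdot,\ol{h})$ (by comparison with the zero function) combined with (A2). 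Your ingredients ``the maximum of two solutions is a solution'' and Arzel\`a--Ascoli are sound and could be assembled into a correct proof along these lines, but as written the critical-level case---the only place where the proposition goes beyond classical ODE theory---rests on a false premise and would need to be redone.
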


\begin{proof}

By the standard ODE theory, there exists a Lipschitz solution of \eqref{dist evolution}. Note that the Lipschitz constant can be taken to be $M:=\sup_{\R\times [0, \infty)} |f|$, thanks to (A1). We define $\overline{\delta}_W(\cdot, h)$ and $\underline{\delta}_W(\cdot, h)$ by 
\begin{align*} 
&\overline{\delta}_W(t, h) := \sup\{ \delta_W(t, h): \text{$\delta_W(\cdot, h)$ is an $M$-Lipschitz solution of \eqref{dist evolution}} \}, \\
&\underline{\delta}_W(t, h) := \inf\{ \delta_W(t, h): \text{$\delta_W(\cdot, h)$ is an $M$-Lipschitz solution of \eqref{dist evolution}} \}. 
\end{align*}
It is clear that $\overline{\delta}_W(t,h), \underline{\delta}_W(t,h)$ are both $M$-Lipschitz in $[0, \infty)$. 
It then suffices to show that $\overline{\delta}_W(t,h), \underline{\delta}_W(t,h)$ solve \eqref{dist evolution}.

For any $t>s\geq 0$ with $\vep=t-s>0$ small, by the definition of $\underline{\delta}_W(\cdot, h)$, we can find a solution $\delta_W^\ast(\cdot, h)$ such that $\ul{\delta}_W(s, h)\leq \delta_W^\ast(s, h)\leq \ul{\delta}_W(s, h)+\vep^2$. It follows that 
\[
\ul{\delta}_W(t, h)-\ul{\delta}_W(s, h)\leq \delta_W^\ast(t, h)-\delta_W^\ast(s, h)+\vep^2\leq \int_s^t (f_h\circ \mu)(\cl W_{h,\tau}^\ast)\, d\tau+\vep^2, 
\]
where we set $W_{h,\tau}^\ast:= W_h(\delta_W^{\ast}(\tau, h))$ for $\tau\geq 0$. 
By the continuity of $\mu$ and $q\mapsto f(h, q)$ as well as the $M$-Lipschitz continuity of $\delta_W^\ast$, we have 
\begin{equation}\label{eq sol minmax1}
\ul{\delta}_W(t, h)-\ul{\delta}_W(s, h)\leq \vep  (f_h\circ \mu)(\cl W_{h, s}^\ast)+o(\vep). 
\end{equation}
Note also that 
\begin{equation}\label{eq sol minmax2}
\limsup_{t\to s+} \cl W^\ast _{h, s}=\bigcap_{\tau>0} \bigcup_{s<t<s+\tau} \cl W^\ast_{h, s}=\cl W_h(\ul{\delta}_W(s, h)).
\end{equation}
If $\ul{\delta}_W(\cdot, h)$ is differentiable at $s$, then dividing both sides of \eqref{eq sol minmax1} by $\vep$ and letting $t\to s$ with $\vep\to 0$, we obtain from \eqref{eq sol minmax2} and the upper semicontinuity of $\mu$ that
\[
(\ul{\delta}_W)_t(s, h)\leq (f_h\circ \mu)(\cl W_h(\ul{\delta}_W(s, h))).
\]
A symmetric argument yields 
\[
(\ul{\delta}_W)_t(s, h)\geq (f_h\circ \mu)(\inter W_h(\ul{\delta}_W(s, h)))
\]
for almost all $s\geq 0$.


By the definition of $\underline{\delta}_W(t,h)$, we can choose a sequence of solutions $\delta^{(i)}_W(\cdot, h)$ such that $\delta^{(i+1)}_W(\cdot, h) \le \delta^{(i)}_W(\cdot, h)$ for any $i \in \mathbb{N}$ and 
\[ \lim_{i \to \infty} \delta^{(i)}_W(t, h) = \underline{\delta}_W(t, h) \quad \text{for} \; \; t \ge 0. \]
Set $W_{h,t}^{i} := W_h(\delta_W^{(i)}(t, h))$ and $W^{\infty}_{h, t} := W_h(\underline{\delta}_W (t,h))$. 
We have 
\[ \inter W^{\infty}_{h,t} \subset \bigcap_{i=1}^\infty \inter W^{i}_{h,t} \subset \bigcap_{i=1}^\infty \cl{W^{i}_{h,t}} \subset \cl{W^\infty_{h,t}} \quad \text{for} \; \; t \ge 0. \]
Therefore, we deduce that for any $t \ge 0$,
\[ (f_h \circ \mu) (\inter W^{\infty}_{h,t}) \le \lim_{i \to \infty} (f_h \circ \mu)(\inter W_{h, t}^{i}), \quad (f_h \circ \mu)(\cl{W^{\infty}_{h, t}}) \ge \lim_{i \to \infty} (f_h \circ \mu) (\cl{W^{i}_{h, t}}). \]
Since  $\delta^{(i)}_W(\cdot, \overline{h})$ solves \eqref{dist evolution} for all $i$, taking the limit as $i\to \infty$ we see that $\underline{\delta}_W(t, h)$ is also a solution to \eqref{dist evolution}. One can similarly verify that $\overline{\delta}_W(t,h)$ is also a Lipschitz solution to \eqref{dist evolution}.

We finally prove that $\underline{\delta}_D(\cdot, \overline{h})$ is nonincreasing and $\overline{\delta}_E(\cdot, \overline{h})$ is nondecreasing. 
Since $\delta(t, \overline{h}) \equiv 0$ is a solution to \eqref{dist evolution} with $h=\overline{h}$, the maximal  and minimal solutions satisfy 
\[ \underline{\delta}_D(t, \overline{h}) \le 0 \le \overline{\delta}_E(t, \overline{h}) \quad \text{for} \; \; t \ge 0. \]
Therefore, due to the monotonicity of $f$ as in (A2), we can see that $\underline{\delta}_D(\cdot, \overline{h})$ is nonincreasing and $\overline{\delta}_E(\cdot, \overline{h})$ is nondecreasing. 
\end{proof}

Hereafter, we redefine $\delta_W(\cdot, h)$ for $h\in\R$, $W=D, E$ as in \eqref{def-sol}. 
Notice that we continue to use $\delta_D, \delta_E$ by abuse of notations. 

\begin{lem}[Monotonicity and continuity in $h$] \label{lem:ode} 
{Assume that (A1)--(A4) hold.} Let $h\in\R$ and $\delta_W(\cdot, h)$ be the function defined by \eqref{def-sol}.  
Then the following properties hold. 
\begin{enumerate}[{\rm(a)}]
\item 
$\delta_{D}(t, h_1) < \delta_{D}(t, h_2)$, $\delta_{D}(t, h_1) < \delta_{E}(t, h_2)$ and $\delta_{E}(t, h_1) < \delta_{E}(t, h_2)$ 
for any $t>0$ and $h_1 < h_2$. 
\item $h\mapsto \delta_D(t, h)$ is left continuous in $\R$, that is, for any $t>0$ and $h\in \R$, 
\begin{equation*}
\delta_D(t, h-\vep)\to \delta_D(t, h)\quad \text{as $\vep\to 0+$}. 
 \end{equation*}
 \item $h\mapsto \delta_E(t, h)$ is right continuous in $\R$, that is, for any $t>0$ and $h\in \R$, 
\begin{equation*}
\delta_E(t, h+\vep)\to \delta_E(t, h)\quad \text{as $\vep\to 0+$.}
 \end{equation*}
\item $\delta_D(t, h)\leq \delta_E(t, h)$ for all $t\geq 0$ and $h\in \R$. 
\end{enumerate}
\end{lem}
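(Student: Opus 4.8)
The plan is to deduce all four statements from one elementary ``time-change'' comparison for scalar ODEs. For $W\in\{D,E\}$ and $h\in\R$ put $G_{W,h}(s):=(f_h\circ\mu)(W_h(s))$. Since $s\mapsto W_h(s)$ is nondecreasing for inclusion, (A2)--(A3) make $G_{W,h}$ nondecreasing in $s$, and by \eqref{same-cl-in} (the computation in \eqref{cont-wrt-set}) it is continuous on $\R\setminus\{0\}$; by Lemma~\ref{lem:reform-ODE}, for $h\ne\overline h$ the function $\delta_W(\cdot,h)$ lies in $C^1((0,\infty))$, vanishes at $0$, and solves $\partial_t\delta_W(t,h)=G_{W,h}(\delta_W(t,h))$. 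First I would record two facts. \emph{(Sign.)} For $h>\overline h$ one has $E_{\overline h}(0)=\{u_0\le\overline h\}\subset\{u_0<h\}=D_h(0)\subset W_h(s)$ for $s\ge0$, and $\mu(D_h(0))>\mu(E_{\overline h}(0))$ in the non-degenerate case (as $\Theta>0$ and $D_h(0)\setminus E_{\overline h}(0)$ is then not $\mu$-null), so by \eqref{h-threshold} and the strict monotonicity of $f$ in $q$, $G_{W,h}\ge G_{W,h}(0)>f(\overline h,\mu(E_{\overline h}(0)))\ge0$ on $[0,\infty)$; symmetrically $G_{W,h}<0$ on $(-\infty,0]$ for $h<\overline h$, while $G_{E,\overline h}\ge0$ on $[0,\infty)$ and $G_{D,\overline h}\le0$ on $(-\infty,0]$. \emph{($h$-monotonicity.)} By \eqref{eq:mono} and (A2), $h_1<h_2$ forces $G_{D,h_1}\le G_{D,h_2}$, $G_{E,h_1}\le G_{E,h_2}$ and $G_{D,h_1}\le G_{E,h_2}$ everywhere; since $\Theta>0$ and $\R^n$ is connected the inclusions in \eqref{eq:mono} have positive $\mu$-measure, so each of these inequalities is \emph{strict} in a punctured neighbourhood of $s=0$.

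The comparison step is this: if $\psi_1,\psi_2\in C^1((0,\infty))$ vanish at $0+$, solve $\psi_i'=g_i(\psi_i)$ with $g_1,g_2$ continuous, of one and the same strict sign on the range of $\psi_1$, and $g_1\le g_2$ there, then $\Phi(s):=\int_0^s d\sigma/g_2(\sigma)$ is a finite strictly monotone $C^1$ function with $\Phi(0)=0$, $\Phi(\psi_2(t))=t$, and $\frac{d}{dt}\Phi(\psi_1(t))=g_1(\psi_1(t))/g_2(\psi_1(t))$, which is $\le1$ if $g_i>0$ and $\ge1$ if $g_i<0$; inverting $\Phi$ gives $\psi_1\le\psi_2$ on $(0,\infty)$, strictly for $t>0$ if moreover $g_1<g_2$ near $0$. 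Applying this with $(\psi_1,\psi_2)$ the appropriate ordered pair of $\delta$'s and $g_i$ the matching $G_{W,h}$ (admissible by the sign fact, which also makes $\Phi$ finite), the $h$-monotonicity fact yields every inequality of (a) when $h_1,h_2\ne\overline h$, and — using only $G_{D,h}\le G_{E,h}$ — yields (d) for $h\ne\overline h$. For $h=\overline h$: (d) is immediate from Proposition~\ref{prop:sol-threshold}, since $\delta_D(t,\overline h)\le0\le\delta_E(t,\overline h)$; and in (a) with $\overline h\in\{h_1,h_2\}$ I would either read off $\delta_D(\cdot,\overline h)\le0$ / $\delta_E(\cdot,\overline h)\ge0$, or insert an intermediate level $h'\in(h_1,h_2)\setminus\{\overline h\}$, the one borderline subcase (comparing $\delta_D(\cdot,h_1)$, $h_1<\overline h$, with the minimal solution $\delta_D(\cdot,\overline h)$) being covered by the same time-change after observing that $\delta_D(\cdot,\overline h)\not\equiv0$ forces $G_{D,\overline h}<0$ on $(-\infty,0]$.

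For (b) and (c) the plan is a monotone-stability argument. By (a), $h\mapsto\delta_D(t,h)$ is nondecreasing, so $L(t):=\lim_{\varepsilon\to0+}\delta_D(t,h-\varepsilon)$ exists and $L\le\delta_D(\cdot,h)$. Because $\{u_0<h\}=\bigcup_{\varepsilon>0}\{u_0<h-\varepsilon\}$ one has $D_h(s)=\bigcup_{\varepsilon>0}D_{h-\varepsilon}(s)$, hence $\mu(D_{h-\varepsilon}(s))\uparrow\mu(D_h(s))$ and, $f$ being continuous, $G_{D,h-\varepsilon}\uparrow G_{D,h}$ pointwise, so locally uniformly off $0$ by Dini. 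The $\delta_D(\cdot,h-\varepsilon)$ being $M$-Lipschitz and vanishing at $0$, the convergence to $L$ is uniform on compacts, and passing to the limit in $\delta_D(t,h-\varepsilon)=\int_0^t G_{D,h-\varepsilon}(\delta_D(\tau,h-\varepsilon))\,d\tau$ shows $L$ solves \eqref{dist evolution} with $L(0)=0$ (the two-sided bound near $t=0$ following from the monotonicity of $L$ and, when $h\ne\overline h$, its strict monotonicity off $0$). Since $\delta_D(\cdot,h)=\underline\delta_D(\cdot,h)$ is by definition the minimal solution, $\delta_D(\cdot,h)\le L\le\delta_D(\cdot,h)$, which is (b). Part (c) is the mirror statement: $\{u_0\le h\}=\bigcap_{\varepsilon>0}\{u_0\le h+\varepsilon\}$ gives $E_h(s)=\bigcap_{\varepsilon>0}E_{h+\varepsilon}(s)$ and $G_{E,h+\varepsilon}\downarrow G_{E,h}$, the limit $L'=\lim_{\varepsilon\to0+}\delta_E(t,h+\varepsilon)\ge\delta_E(t,h)$ solves \eqref{dist evolution}, and maximality of $\delta_E(\cdot,h)=\overline\delta_E(\cdot,h)$ forces $L'=\delta_E(\cdot,h)$.

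The step I expect to be the main obstacle is the threshold level $h=\overline h$: there $\delta_D,\delta_E$ are only minimal/maximal solutions of a differential inclusion whose right-hand side jumps at $s=0$ and need not be Lipschitz, so the classical ODE comparison principles do not apply verbatim. The time-change circumvents the missing Lipschitz bound, but one must still check carefully (i) the strict-sign dichotomy for $G_{D,\overline h}$ and $G_{E,\overline h}$ used in (a), and (ii) that the limit functions $L,L'$ in (b),(c) really satisfy the inclusion — not merely the reduced ODE — in a right-neighbourhood of $t=0$. (Throughout, the non-degenerate configuration $D_{\overline h}(0)\ne\emptyset$ is assumed; when $\overline h<\inf u_0$ the parallel sets involved are empty or $\mu$-negligible and every assertion is trivial.)
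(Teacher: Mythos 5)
Your treatment of (b), (c) and of the limiting step in (d) is essentially the paper's proof: use the $h$-monotonicity from (a) to extract the one-sided limit $L$, show that $L$ solves the limit equation \eqref{ODE limit}/\eqref{dist evolution} (the paper does this via the set identity $\bigcup_{\vep>0}D_{h-\vep,t}=D_h(\delta(t))$ and continuity of $\mu$, you via monotone convergence of $G_{D,h-\vep}$ plus Dini and dominated convergence — the same squeeze in different packaging), and then invoke minimality/maximality in \eqref{def-sol} together with $L\le\delta_D(\cdot,h)$ to force equality. Where you genuinely diverge is (a) and the direct part of (d): the paper dismisses (a) as an immediate consequence of \eqref{eq:mono}, (A2) and \eqref{dist evolution} (implicitly a first-crossing comparison using that the right-hand sides are strictly ordered), whereas you run an explicit time-change $\Phi(s)=\int_0^s d\sigma/g_2(\sigma)$, which is a legitimate substitute for the missing Lipschitz comparison and has the merit of making the strictness and the threshold level $h=\ol{h}$ visible; the paper instead obtains (d) at every $h$ painlessly by letting $\vep\to0$ in $\delta_D(t,h-\vep)<\delta_E(t,h+\vep)$, which avoids your borderline subcases altogether and is the cheaper route once (b), (c) are in hand.

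Two points need repair. First, in the borderline subcase $h_1<h_2=\ol{h}$ for $\delta_D$, your justification ``$\delta_D(\cdot,\ol{h})\not\equiv0$ forces $G_{D,\ol{h}}<0$ on $(-\infty,0]$'' is false at $s=0$: this is exactly the fattening regime of \eqref{fattening suff1}, where $f(\ol{h},\mu(D_{\ol{h}}(0)))=0$ yet the minimal solution escapes $0$ (e.g.\ $G_{D,\ol{h}}(s)=-\sqrt{|s|}$). What is true, and what your time-change actually needs, is (i) $G_{D,\ol{h}}<0$ on $(-\infty,0)$, which holds automatically when $D_{\ol{h}}(0)\neq\emptyset$ because the boundary layer $D_{\ol{h}}(0)\setminus D_{\ol{h}}(s)$ contains a nonempty open set, so $\mu$ strictly drops and \eqref{monotone eq} applies; and (ii) finiteness of $\Phi$ near $0^-$, i.e.\ integrability of $1/|G_{D,\ol{h}}|$ there, which does not follow from a sign but does follow from the escape of $\psi_2=\delta_D(\cdot,\ol{h})$ itself (separating variables along the escaping solution, and noting by time translation that the minimal solution leaves $0$ immediately, so $\Phi(\psi_2(t))=t$ still holds). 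With these substitutions the subcase goes through, so the gap is local and patchable rather than fatal. Second, your closing parenthetical that in the degenerate configuration ``every assertion is trivial'' is not accurate for the strict inequalities in (a): if $h_1<h_2\le\inf u_0$ then $\delta_D(t,h_i)=f(h_i,0)t$ and (A2) gives only $f(h_1,0)\le f(h_2,0)$, so strictness can fail; this defect, however, is inherited from the statement and the paper's own one-line proof of (a), not introduced by your argument.
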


\begin{proof}
The statement (a) is a direct consequence of \eqref{eq:mono}, (A2), and \eqref{dist evolution}.
Let us prove (b). Note that for any $h\in \R$, there exists $\delta \in {\rm Lip}([0, \infty))$ such that 
$\lim_{\ep \to 0+} \delta_D(t, h-\ep) = \delta(t)$ for  $t \ge 0$. 
We then obtain 
\[ \bigcup_{\ep > 0} D_{h - \ep, t} = D_h(\delta(t)) \quad \text{for} \; \; t \ge 0. \]
Moreover, we have $h-\vep\neq \ol{h}$ for $\vep>0$ sufficiently small. By Lemma \ref{lem:reform-ODE}, we see that \eqref{dist ODE} holds with $h$ replaced by $h-\vep$. Passing to the limit 
as $\ep \to 0+$, we have 
\begin{equation}\label{ODE limit} 
\delta_t (t) = (f_h \circ \mu) (D_h(\delta(t))) \quad \text{for a.e.} \; \; t \ge 0 \quad \text{with} \; \; \delta(0) = 0. 
\end{equation}
In particular, $\delta(t)$ satisfies \eqref{dist evolution}. 
On the other hand, by (a), we see that 
\[ \delta(t) = \lim_{\ep \to 0+} \delta_D(t, h-\ep) \le \delta_D (t, h) \quad \text{for} \; \; t \ge 0. \]
Since $\delta_D(\cdot, h)$ was chosen as the minimal solution to \eqref{dist evolution} in \eqref{def-sol}, we have $\delta(t) \equiv \delta_D(t, h)$, 
which implies the left continuity of $\delta_D(t,h)$. 
The right continuity of $\delta_E$ can be proved symmetrically. 
Finally, since $\delta_{D}(t,h-\ep)<\delta_{E}(t,h+\ep)$ holds for all $\vep>0$,  letting $\ep\to0+$ yields (d) in light of (b) and (c).  
\end{proof}

Our argument in the proof of Lemma \ref{lem:ode}(b) above enables us to also show that $\delta_W$ given by \eqref{def-sol} are  solutions of \eqref{dist ODE} for $h=\ol{h}$. This is true in the case $h\neq \ol{h}$ as well due to Lemma \ref{lem:reform-ODE} and \ref{prop:sol-threshold}.

\begin{cor}\label{cor:ODE-oh} 
Assume that (A1)--(A4) hold and there exists $\ol{h}\in \R$ satisfying \eqref{h-threshold}. Let $h\in\R$ and $\delta_W(\cdot, h)$ be the function defined by \eqref{def-sol}.  
Then, $\delta_W(\cdot, \overline{h})$ is of class $C^1((0,\infty))$ and is a solution to \eqref{dist ODE}, 
respectively, for $W=D$ and $W=E$. 
\end{cor}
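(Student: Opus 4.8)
The plan is to prove the statement for $W=D$ in detail and to obtain the case $W=E$ by the symmetric argument, approximating the critical level $\overline{h}$ from above by $\overline{h}+\varepsilon$ and using that $\delta_E(\cdot,\overline{h})=\overline{\delta}_E(\cdot,\overline{h})$ is nondecreasing (Proposition \ref{prop:sol-threshold}), exactly as the proof of Lemma \ref{lem:ode}(c) mirrors that of Lemma \ref{lem:ode}(b). For the assertion that $\delta_D(\cdot,\overline{h})$ solves \eqref{dist ODE}, the key point is that the computation already carried out in the proof of Lemma \ref{lem:ode}(b) never uses $h\neq\overline{h}$ for the level being approximated: one approximates $\overline{h}$ by $\overline{h}-\varepsilon$, which satisfies $\overline{h}-\varepsilon\neq\overline{h}$ for $\varepsilon>0$, so Lemma \ref{lem:reform-ODE} applies to $\delta_D(\cdot,\overline{h}-\varepsilon)$; the limit $\delta(t):=\lim_{\varepsilon\to 0+}\delta_D(t,\overline{h}-\varepsilon)$ exists in $\Lip([0,\infty))$ by monotonicity in $\varepsilon$ and the uniform Lipschitz bound; and passing to the limit in \eqref{dist ODE} at level $\overline{h}-\varepsilon$ — using the continuity of $\mu$ and of $q\mapsto f(\overline{h},q)$ together with $\bigcup_{\varepsilon>0}D_{\overline{h}-\varepsilon}(\delta_D(t,\overline{h}-\varepsilon))=D_{\overline{h}}(\delta(t))$ — produces precisely \eqref{ODE limit} with $h=\overline{h}$, which is \eqref{dist ODE}. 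Since Lemma \ref{lem:ode}(a) gives $\delta(t)\le\delta_D(t,\overline{h})$ and $\delta_D(\cdot,\overline{h})$ is the minimal solution of \eqref{dist evolution}, we get $\delta\equiv\delta_D(\cdot,\overline{h})$, so $\delta_D(\cdot,\overline{h})$ solves \eqref{dist ODE}.

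It then remains to upgrade the regularity to $C^1((0,\infty))$, which is the only genuinely new point and the place where the critical level $\overline{h}$ departs from the generic case of Lemma \ref{lem:reform-ODE}. There the key was that strict monotonicity forces $\delta_W(t,h)\neq0$ for $t>0$, so \eqref{same-cl-in} is available and the argument of \eqref{cont-wrt-set} yields continuity of the right-hand side of \eqref{dist ODE}. At the critical level I would instead use that $\delta_D(\cdot,\overline{h})$ is nonincreasing with $\delta_D(0,\overline{h})=0$ (Proposition \ref{prop:sol-threshold}): at any $t>0$ with $\delta_D(t,\overline{h})<0$ the signed distance stays strictly negative near $t$, so \eqref{same-cl-in} applies and the sandwiching
\[
\inter D_{\overline{h}}(\delta_D(t,\overline{h}))\ \subset\ D_{\overline{h}}(\delta_D(s,\overline{h}))\ \subset\ \cl{D_{\overline{h}}(\delta_D(t,\overline{h}))}\qquad (s\to t\pm)
\]
gives continuity of $s\mapsto(f_{\overline{h}}\circ\mu)(D_{\overline{h}}(\delta_D(s,\overline{h})))$ at $t$, exactly as in \eqref{cont-wrt-set}. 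The only other possibility is $\delta_D(\cdot,\overline{h})\equiv0$ on an initial interval $[0,t_0]$; there the right-hand side of \eqref{dist ODE} is constantly $(f_{\overline{h}}\circ\mu)(D_{\overline{h}}(0))$, and since $D_{\overline{h}}(0)=\{u_0<\overline{h}\}$ is open one has $\bigcup_{s>t_0}D_{\overline{h}}(\delta_D(s,\overline{h}))=D_{\overline{h}}(0)$, so the one-sided limits at $t_0$ coincide. Hence $t\mapsto(f_{\overline{h}}\circ\mu)(D_{\overline{h}}(\delta_D(t,\overline{h})))$ is continuous on $(0,\infty)$; since $\delta_D(\cdot,\overline{h})$ is Lipschitz and solves \eqref{dist ODE} for a.e.\ $t$, it is in fact of class $C^1((0,\infty))$.

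I expect the main obstacle to be precisely this last continuity check at the exceptional times where $\delta_W(t,\overline{h})=0$: the identity \eqref{same-cl-in} that does all the work in Lemma \ref{lem:reform-ODE} is unavailable at $s=0$, so one must instead extract the needed continuity directly from the openness of $D_{\overline{h}}(0)$ — respectively the closedness of $E_{\overline{h}}(0)$ in the case $W=E$, where one uses $\bigcap_{s>t_0}E_{\overline{h}}(\delta_E(s,\overline{h}))=E_{\overline{h}}(0)$. Everything else is a faithful transcription of the proofs of Lemma \ref{lem:reform-ODE} and Lemma \ref{lem:ode}.
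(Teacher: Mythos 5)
Your proposal is correct and follows essentially the same route as the paper: the fact that $\delta_W(\cdot,\overline{h})$ solves \eqref{dist ODE} is extracted from the limiting argument already carried out in the proof of Lemma \ref{lem:ode} (approximation in $h$ plus minimality/maximality), and the $C^1$ regularity is obtained from the monotonicity of $\delta_W(\cdot,\overline{h})$ in Proposition \ref{prop:sol-threshold} together with \eqref{same-cl-in} on the time set where $\delta_W(t,\overline{h})\neq 0$, exactly as in the paper. Your only addition is the explicit continuity check at the junction time $t_0$ via $\bigcup_{s>t_0}D_{\overline{h}}(\delta_D(s,\overline{h}))=D_{\overline{h}}(0)$ (respectively $\bigcap_{s>t_0}E_{\overline{h}}(\delta_E(s,\overline{h}))=E_{\overline{h}}(0)$), a detail the paper's proof leaves implicit.
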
 

\begin{proof}
When $h\neq \ol{h}$, the proof immediately follows from Lemma \ref{lem:reform-ODE} and \ref{prop:sol-threshold}. 
We only give a proof for $\delta_D(\cdot, \overline{h})$ since we can similarly prove for $\delta_E(\cdot, \overline{h})$. 
We have shown in the proof of Lemma \ref{lem:ode} that $\delta(t)=\delta_D(t, \ol{h})$ is a Lipschitz solution of \eqref{ODE limit} with $h=\ol{h}$.  It remain to prove $C^1$ continuity of $\delta_D(\cdot, \ol{h})$.
If $\delta_D(t, \overline{h}) = 0 $ for all $t\geq 0$, then $(f_{\overline{h}} \circ \mu)(D_{\overline{h}, t})$ is identically $0$ in time and, in particular, is continuous in $t \in [0, \infty)$.  
Suppose that there exists $t_0\ge0$ such that $\delta_D(t, \overline{h}) = 0$ for $t \in [0, t_0]$ and $\delta_D(\cdot, \overline{h})$ is strictly decreasing for $t \in (t_0, \infty)$. Since $\delta_D(t, \overline{h}) < 0$ for $t \in (t_0, \infty)$, we can apply \eqref{same-cl-in} to obtain the continuity of $(f_{\overline{h}} \circ \mu)(D_{\overline{h}, t})$. 
It therefore follows from \eqref{ODE limit} that $\delta_D(\cdot, \overline{h})$ is a solution to \eqref{dist ODE} of class $C^1((0,\infty))$. 
\end{proof}

\subsection{The representation formula}
Let us prove Theorem \ref{thm:control} in this section. Recall that the functions $U_D, U_E, u_D, u_E$ are defined by \eqref{def:value}, \eqref{def:value2}, \eqref{new upper value}, \eqref{new lower value}, respectively. 
We first remark that
\begin{equation}\label{eq suboptimal add2}
U_D(x, t, u_D(x, t))\geq 0 \quad \text{for all $(x, t)\in \R^n\times [0, \infty)$}.
\end{equation}
Indeed, thanks to the left continuity of $h\mapsto \delta_D(t, h)$ shown in Lemma \ref{lem:ode}, we have 
\[
\lim_{\vep\to 0+} U_D(x, t, h-\vep)=U_D(x, t, h)
\] 
for any $(x, t)\in \R^n\times [0, \infty)$ and $h\in \R$. Then \eqref{eq suboptimal add2} follows from the definition of $u_D$ and the monotonicity of $h\mapsto U_D(x, t, h)$ stated in Proposition \ref{prop:mono}. We can similarly show that 
$U_E(x, t, u_E(x, t))\leq 0$ for all $(x, t)\in \R^n\times [0, \infty)$.

\begin{prop}[Dynamic Programming Principle (DPP)]\label{prop dpp}
Assume that {\rm(A1)--(A4)} hold and there exists $\ol{h}\in \R$ satisfying \eqref{h-threshold}. 
Let $\delta_W$ with $W=D, E$ denote the maximal and minimal solutions as in \eqref{def-sol}.  Then, 
\begin{align}
&U_D(x, t, h)=
\left\{
\begin{array}{ll}
{\min}\{U_D(y, s, h): |y-x|\leq \delta_D(t, h)-\delta_D(s, h)\}  & \text{if} \; \; h > \overline{h}\\
\max\{U_D(y, s, h): |y-x| \le \delta_D(s,h)-\delta_D(t,h)\}  &\text{if} \; \; h \le \overline{h},  
\end{array}
\right. 
\label{dpp1}\\
&
U_E(x, t, h)=
\left\{
\begin{array}{ll}
\min\{U_E(y, s, h): |y-x|\leq \delta_E(t, h)-\delta_E(s, h)\} & \text{if} \; \; h \ge \overline{h}\\
{\max}\{U_E(y, s, h): |y-x|\leq \delta_E(s, h)-\delta_E(t, h)\} &\text{if} \; \; h < \overline{h}
\end{array}
\right. \label{dpp2}
\end{align}
for all $x\in \R^n$, $t\geq s\geq 0$. 
\end{prop}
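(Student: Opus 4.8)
The plan is to recognize that, for each fixed level $h$, the map $t\mapsto U_D(\cdot,t,h)$ is simply the orbit of $U_0(\cdot,h)$ under a one–parameter family of morphological operators (erosion if $h>\overline{h}$, dilation if $h\le\overline{h}$) whose radius is $|\delta_D(t,h)|$, and that these operators form a semigroup in the radius parameter; the claimed dynamic programming principle is precisely this semigroup property. The nonlocal ODE \eqref{dist ODE} plays no role in the argument beyond having been used to establish the monotonicity of $\delta_W(\cdot,h)$ in $t$.

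First I would record the elementary building blocks. For a continuous function $g:\R^n\to\R$ and $r\ge 0$, set
\[
\mathcal{E}_r[g](x):=\min_{|y-x|\le r}g(y),\qquad \mathcal{D}_r[g](x):=\max_{|y-x|\le r}g(y),
\]
which are well defined, the extrema being attained on the compact ball $\overline{B}(x,r)$, and again continuous — indeed $\mathcal{E}_r[g]$ and $\mathcal{D}_r[g]$ inherit a modulus of continuity of $g$ — so these operators may be iterated. The only nontrivial ingredient is the semigroup identity: by the triangle inequality, $\overline{B}(x,r_1+r_2)=\bigcup_{|y-x|\le r_1}\overline{B}(y,r_2)$ for all $r_1,r_2\ge0$, and hence
\[
\mathcal{E}_{r}[g]=\mathcal{E}_{r-r'}\big[\mathcal{E}_{r'}[g]\big],\qquad \mathcal{D}_{r}[g]=\mathcal{D}_{r-r'}\big[\mathcal{D}_{r'}[g]\big]\qquad\text{whenever }r\ge r'\ge 0.
\]

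Next I would translate the definitions \eqref{def:value}--\eqref{def:value2} into this language and run through the sign regimes. Recall that $U_0(\cdot,h)=u_0-h$ is uniformly continuous by (A4) and that $\delta_W(0,h)=0$. If $h>\overline{h}$ and $W=D$, then $\delta_D(\cdot,h)$ is increasing by Lemma \ref{lem:reform-ODE}, hence $0\le\delta_D(s,h)\le\delta_D(t,h)$ for $0\le s\le t$, and $U_D(\cdot,\tau,h)=\mathcal{E}_{\delta_D(\tau,h)}[U_0(\cdot,h)]$; applying the erosion semigroup identity with $r=\delta_D(t,h)$ and $r'=\delta_D(s,h)$ gives $U_D(\cdot,t,h)=\mathcal{E}_{\delta_D(t,h)-\delta_D(s,h)}[U_D(\cdot,s,h)]$, which is \eqref{dpp1}. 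If $h\le\overline{h}$ and $W=D$, then $\delta_D(\cdot,h)$ is nonincreasing with $\delta_D(0,h)=0$ — by Lemma \ref{lem:reform-ODE} when $h<\overline{h}$ and by the last assertion of Proposition \ref{prop:sol-threshold} when $h=\overline{h}$ — so $0\le-\delta_D(s,h)\le-\delta_D(t,h)$, and now $U_D(\cdot,\tau,h)=\mathcal{D}_{-\delta_D(\tau,h)}[U_0(\cdot,h)]$, whence the dilation semigroup identity with $r=-\delta_D(t,h)$ and $r'=-\delta_D(s,h)$ gives \eqref{dpp1}. The formula \eqref{dpp2} for $U_E$ follows in exactly the same way, using that $\delta_E(\cdot,h)$ is increasing and nonnegative for $h\ge\overline{h}$ (Lemma \ref{lem:reform-ODE} when $h>\overline{h}$, Proposition \ref{prop:sol-threshold} when $h=\overline{h}$) and nonincreasing and nonpositive for $h<\overline{h}$.

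I do not anticipate a genuine obstacle here; the argument is essentially the standard infimal-convolution semigroup. The only points requiring attention are: (i) verifying that every radius occurring — $\delta_D(t,h)-\delta_D(s,h)$, $\delta_D(s,h)-\delta_D(t,h)$, and their $E$-counterparts — is nonnegative, which is exactly where the monotonicity of $\delta_W(\cdot,h)$ in $t$ together with $\delta_W(0,h)=0$ enters, so that the extrema are always taken over nonempty compact balls; and (ii) keeping the four cases (the two choices $W=D,E$ crossed with $h$ above or below $\overline{h}$) and the critical value $h=\overline{h}$ sorted out, invoking Proposition \ref{prop:sol-threshold} rather than Lemma \ref{lem:reform-ODE} at that level.
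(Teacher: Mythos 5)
Your proposal is correct and is essentially the paper's argument in disguise: the paper proves the two inequalities by exactly the two point-selections (triangle inequality for one direction, a point chosen on the segment from $x$ to the optimal $z$ for the other) that underlie your ball identity $\overline{B}(x,r_1+r_2)=\bigcup_{|y-x|\le r_1}\overline{B}(y,r_2)$, and it likewise relies on the monotonicity of $\delta_W(\cdot,h)$ to make the radii nonnegative in each sign regime. Packaging this as the semigroup property of erosion/dilation operators is a cosmetic, not substantive, difference.
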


\begin{proof}
We only prove \eqref{dpp1} with $h > \overline{h}$ as we can prove the others similarly. 
Fix $x\in\R^n$, $t\geq s\geq 0$ and $h>\overline{h}$. Take any $y\in \R^n$ satisfying
\begin{equation}\label{eq dpp1}
|y-x|\leq \delta_D(t, h)-\delta_D(s, h).
\end{equation}
By \eqref{def:value}, there exists $z\in \R^n$ such that 
\begin{equation}\label{eq dpp2}
|z-y|\leq \delta_D(s, h), 
\quad\text{and} \quad U_D(y, s, h)=U_0(z, h).
\end{equation}
We combine \eqref{eq dpp1} and \eqref{eq dpp2} to deduce that
\begin{equation}\label{eq dpp3}
\delta_D(t, h)\ge \delta_D(s, h)+|y-x|\ge |z-y|+|y-x|\ge |z-x|, 
\end{equation}
which, by \eqref{def:value} again, yields
$U_D(x, t, h)\leq U_0(z, h)= U_D(y, s, h)$.
Letting $\vep\to 0$, we have
\[
U_D(x, t, h)\leq \inf\{U_D(y, s, h): |y-x|\leq \delta_D(t, h)-\delta_D(s, h)\}.
\]

Let us next prove the opposite inequality. 
Take $z\in \R^n$ satisfying \eqref{eq dpp3} 
and $U_D(x, t, h)= U_0(z, h)$. 
For any $0\leq s\leq t$, take $y\in \R^n$ such that 
\[
|z-y|+|y-x|=|z-x|, \quad |z-y|=\min\{\delta_D(s, h), |z-x|\}, 
\]
which implies \eqref{eq dpp1}. By \eqref{def:value}, we obtain $U_D(y, s, h)\leq U_0(z, h)= U_D(x, t, h)$. 
It follows that 
\[
U_D(x, t, h)\geq \inf\{U_D(y, s, h): |y-x|\leq \delta_D(t, h)-\delta_D(s, h)\}, 
\]
which concludes the proof. 
\end{proof}

\begin{prop}\label{prop:mono}
Assume that {\rm(A1)--(A4)} hold and there exists $\ol{h}\in \R$ satisfying \eqref{h-threshold}. 
Then,  for each $(x, t)\in \R^n\times [0, \infty)$, $h\mapsto U_D(x, t, h)$ and $h\mapsto U_E(x, t, h)$ 
are strictly decreasing in $\R$. 
\end{prop}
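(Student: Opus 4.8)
The plan is to prove strict monotonicity of $h\mapsto U_D(x,t,h)$; the argument for $U_E$ is entirely symmetric. Recall from \eqref{def:value} that $U_D$ has two regimes, $h>\ol h$ and $h\le\ol h$, and that in both regimes $U_D(x,t,h)$ is built from $U_0(y,h)=u_0(y)-h$ by taking a min (resp.\ max) over a ball of radius $\pm\delta_D(t,h)$. The key structural facts I would invoke are: (i) $h\mapsto U_0(y,h)$ is strictly decreasing in $h$ for every fixed $y$, since $U_0(y,h)=u_0(y)-h$; (ii) the monotonicity of the parallel sets in $h$ from \eqref{eq:mono}, namely $\cl{D_{h_1}}(s)\subset D_{h_2}(s)$ when $h_1<h_2$; and (iii) the strict monotonicity of the radii in $h$, i.e.\ Lemma~\ref{lem:ode}(a), which gives $\delta_D(t,h_1)<\delta_D(t,h_2)$ for $h_1<h_2$ and $t>0$.

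Take $h_1<h_2$ and fix $(x,t)$. I would split into cases according to where $h_1,h_2$ sit relative to $\ol h$. \emph{Case 1: $\ol h<h_1<h_2$.} Here $U_D(x,t,h_i)=\min_{|y-x|\le\delta_D(t,h_i)}U_0(y,h_i)$. Let $y_1$ attain the minimum for $h_1$. Since $\delta_D(t,h_1)\le\delta_D(t,h_2)$ (with strict inequality when $t>0$), $y_1$ is admissible for $h_2$, so $U_D(x,t,h_2)\le U_0(y_1,h_2)=U_0(y_1,h_1)-(h_2-h_1)=U_D(x,t,h_1)-(h_2-h_1)<U_D(x,t,h_1)$. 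The same comparison-of-feasible-sets argument with a sign flip handles \emph{Case 3: $h_1<h_2\le\ol h$}, where both values are maxima over balls of radius $-\delta_D(t,h_i)$: here one uses $-\delta_D(t,h_2)\le-\delta_D(t,h_1)$, picks the maximizer $y_2$ for $h_2$, checks it is admissible for $h_1$, and concludes $U_D(x,t,h_1)\ge U_0(y_2,h_1)=U_0(y_2,h_2)+(h_2-h_1)>U_D(x,t,h_2)$. \emph{Case 2: $h_1\le\ol h<h_2$.} This mixed case is the genuinely new one: $U_D(x,t,h_1)=\max_{|y-x|\le-\delta_D(t,h_1)}U_0(y,h_1)$ while $U_D(x,t,h_2)=\min_{|y-x|\le\delta_D(t,h_2)}U_0(y,h_2)$. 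Here I cannot directly compare feasible balls; instead I would use \eqref{bounds} (Proposition~\ref{prop direct compare}), which gives $u_E(x,t)\le u_D(x,t)$ and, together with the definitions, forces $U_D(x,t,\ol h)\ge 0\ge$ the relevant value — more precisely, I would argue via the already-proven DPP (Proposition~\ref{prop dpp}) and the sign constraint \eqref{h-threshold} that at $h=\ol h$ both one-sided limits of $U_D$ agree and equal $U_D(x,t,\ol h)$, reducing Case~2 to chaining Case~3 on $[h_1,\ol h]$ with Case~1 on $[\ol h,h_2]$ provided $U_D$ is continuous (not merely monotone) across $\ol h$; alternatively, one observes directly that $\delta_D(t,\ol h)=0$ when $(f_{\ol h}\circ\mu)(D_{\ol h}(0))=0$, or $\delta_D(t,\ol h)\le 0$, so the radius in the $h_1$-formula is $\ge 0$ and the radius in the $h_2$-formula is $>0$, and strictness is inherited from the $-(h_2-h_1)$ shift in $U_0$ after using $t>0$.

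The step I expect to be the main obstacle is exactly \emph{Case 2}, the transition across the critical level $\ol h$, because the formula for $U_D$ changes from a max to a min there and the two balls need not be nested. Handling it cleanly requires either (a) first establishing continuity of $h\mapsto U_D(x,t,h)$ at $h=\ol h$ — which one can extract from the left/right continuity of $\delta_D$ in Lemma~\ref{lem:ode}(b) combined with the sign-threshold property ensuring $\delta_D(t,\ol h^-)$ and $\delta_D(t,\ol h^+)$ both match $\delta_D(t,\ol h)$ — and then concatenating the two monotone pieces, or (b) a direct estimate using that $\delta_D(t,h)\ge 0$ for $h>\ol h$ and $\delta_D(t,h)\le 0$ for $h\le\ol h$, so that for $h_1\le\ol h<h_2$ both admissible balls are genuine (nonempty, possibly shrinking to a point) and the strict drop $-(h_2-h_1)$ in $U_0(\cdot,h_2)$ versus $U_0(\cdot,h_1)$ cannot be compensated. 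I would present approach (b) for brevity, noting that the $t=0$ case is immediate since $\delta_D(0,h)=0$ for all $h$ so $U_D(x,0,h)=U_0(x,h)=u_0(x)-h$ is manifestly strictly decreasing, and the $t>0$ case follows from the feasible-set comparisons above together with (i).
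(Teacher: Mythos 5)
Your proposal is correct and takes essentially the same route as the paper: nested-ball comparisons together with the shift $U_0(y,h_1)=U_0(y,h_2)+(h_2-h_1)$ handle the two pure cases, and the mixed case $h_1\le\ol{h}<h_2$ is handled via the sign information $\delta_D(t,h_1)\le 0\le\delta_D(t,h_2)$. The paper finishes that mixed case exactly as in your option (b), by testing $y=x$ in both formulas to get $U_D(x,t,h_1)\ge U_0(x,h_1)>U_0(x,h_2)\ge U_D(x,t,h_2)$, so the continuity/chaining alternative (a) you worried about is unnecessary.
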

\begin{proof}
For any $\overline{h} < h_1<h_2$, $(x, t)\in \R^n\times [0, \infty)$, by Lemma \ref{lem:ode}(a), we use the definition of $U_D$ in \eqref{def:value} to get
\[
\begin{aligned}
U_D(x, t, h_1)&=\inf\{U_0(y, h_1): |y-x|\leq \delta_D(t, h_1)\}\\
&> \inf\{U_0(y, h_2)+h_2-h_1: |y-x|\leq \delta_D(t, h_2)\}\\
&> \inf\{U_0(y, h_2): |y-x|\leq \delta_D(t, h_1)\} =U_D(x, t, h_2).
\end{aligned}
\]
One can similarly show the same property for $h_1<h_2\leq \ol{h}$. For $h_1\leq \ol{h}<h_2$, we have $\delta_D(t, h_1)\leq 0\leq \delta_D(t, h_2)$. By \eqref{def:value} again, we obtain 
\[
U_D(x, t, h_1)\geq U_0(x, h_1)>U_0(x, h_2)\geq U_D(x, t, h_2).
\]
This completes the proof of strict monotonicity of $h\mapsto U_D(x, t, h)$. The proof for the monotonicity of $h\mapsto U_E(x, t, h)$ is analogous and thus omitted here. 
\end{proof} 


\begin{prop}\label{prop direct compare} 
Assume that {\rm(A1)--(A4)} hold and there exists $\ol{h}\in \R$ satisfying \eqref{h-threshold}. 
Then \eqref{bounds} holds. 
\end{prop}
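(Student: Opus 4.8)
The plan is to read off all three inequalities of \eqref{bounds} directly from the definitions \eqref{def:value}--\eqref{def:value2} of $U_D,U_E$ and \eqref{new upper value}--\eqref{new lower value} of $u_D,u_E$, using only the sign information on $\delta_D(t,\cdot),\delta_E(t,\cdot)$ relative to $\ol h$ (from Lemma \ref{lem:reform-ODE} and Proposition \ref{prop:sol-threshold}) together with the ordering $\delta_D(t,h)\le\delta_E(t,h)$ from Lemma \ref{lem:ode}(d). No appeal to the PDE is needed.

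First I would dispatch the two outer bounds, which are nearly immediate. For $u_D(x,t)\le\max\{u_0(x),\ol h\}$: any $h>\max\{u_0(x),\ol h\}$ satisfies $h>\ol h$, hence $\delta_D(t,h)\ge 0$, so the centre $y=x$ is admissible in the minimization defining $U_D(x,t,h)$ and $U_D(x,t,h)\le U_0(x,h)=u_0(x)-h<0$; by \eqref{new upper value} this forces $u_D(x,t)\le\max\{u_0(x),\ol h\}$. The bound $u_E(x,t)\ge\min\{u_0(x),\ol h\}$ follows by the mirror-image argument: for $h<\min\{u_0(x),\ol h\}$ we have $h<\ol h$, so $-\delta_E(t,h)\ge 0$, the point $y=x$ is admissible in the maximization defining $U_E(x,t,h)$, and $U_E(x,t,h)\ge u_0(x)-h>0$, whence $u_E(x,t)\ge\min\{u_0(x),\ol h\}$. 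In particular $u_D(x,t)$ and $u_E(x,t)$ are finite.

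The heart of the matter is the middle inequality $u_E(x,t)\le u_D(x,t)$, and for this I would first prove the pointwise-in-$h$ comparison $U_E(x,t,h)\le U_D(x,t,h)$ for every $h\in\R$. I expect this to be the main (though still modest) obstacle, since it requires splitting according to the sign of $h-\ol h$, and in each regime the two value functions are built from different operations. For $h>\ol h$ both $U_D$ and $U_E$ are minima of $U_0(\cdot,h)$ over closed balls about $x$ of radii $\delta_D(t,h)$ and $\delta_E(t,h)$; since $0\le\delta_D(t,h)\le\delta_E(t,h)$, the admissible set for $U_E$ is the larger, so $U_E\le U_D$. For $h<\ol h$ both are maxima over balls of radii $-\delta_D(t,h)$ and $-\delta_E(t,h)$; here $\delta_D(t,h)\le\delta_E(t,h)\le 0$, so $-\delta_D(t,h)\ge-\delta_E(t,h)\ge 0$, the admissible set for $U_D$ is the larger, and again $U_E\le U_D$. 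Finally, at $h=\ol h$, $U_D(x,t,\ol h)$ is a maximum of $U_0(\cdot,\ol h)$ over a ball of radius $-\delta_D(t,\ol h)\ge 0$ and $U_E(x,t,\ol h)$ a minimum over a ball of radius $\delta_E(t,\ol h)\ge 0$ (using $\underline\delta_D(\cdot,\ol h)\le 0\le\overline\delta_E(\cdot,\ol h)$ from Proposition \ref{prop:sol-threshold}), so taking $y=x$ in both optimizations gives $U_E(x,t,\ol h)\le U_0(x,\ol h)\le U_D(x,t,\ol h)$.

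The last step is to deduce $u_E(x,t)\le u_D(x,t)$ by contradiction: if $u_D(x,t)<h<u_E(x,t)$ for some $h$, then $h\notin\{h':U_D(x,t,h')\ge 0\}$ forces $U_D(x,t,h)<0$, while $h\notin\{h':U_E(x,t,h')\le 0\}$ forces $U_E(x,t,h)>0$, contradicting the pointwise comparison just established. (Note this final step does not even use the strict monotonicity of Proposition \ref{prop:mono}.) Assembling the three inequalities yields \eqref{bounds}.
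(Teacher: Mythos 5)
Your proposal is correct and follows essentially the same route as the paper: the middle inequality comes from the pointwise comparison $U_E(x,t,h)\le U_D(x,t,h)$ (via $\delta_D\le\delta_E$ and the sign of $\delta_W$ relative to $\ol h$), and the outer bounds come from testing $U_D,U_E$ with the admissible point $y=x$ at levels beyond $\max\{u_0(x),\ol h\}$ and below $\min\{u_0(x),\ol h\}$. The only cosmetic difference is that the paper proves the two-sided bound for $u_D$ (and symmetrically for $u_E$) and phrases the conclusion with an $\vep$-argument rather than your contradiction, which changes nothing of substance.
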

\begin{proof}
We first show that $u_E\leq u_D$ in $\R^n\times [0, \infty)$. 
In view of (A2) and Lemma \ref{lem:ode}(d), by \eqref{def:value} and \eqref{def:value2} we easily see that 
\begin{equation}\label{height comparison}
U_D(x, t, h)\geq U_E(x, t, h)
\end{equation}
for all $(x, t)\in \R^n\times [0, \infty)$ and $h\in \R$. In view of the definition of $u_D$ in \eqref{new upper value}, for any $\vep>0$, 
we have $U_D(x, t, u_D(x, t)+\vep)< 0$, which by \eqref{height comparison} implies that $U_E(x, t, u_D(x, t)+\vep)< 0$. Applying the definition of $u_E$ in \eqref{new lower value}, we obtain $u_E\leq u_D+\vep$ in $\R^n\times [0, \infty)$, which implies the desired inequality due to the arbitrariness of $\vep>0$. 

We next show that 
\begin{equation}\label{bounds-d}
\min\{u_0(x),\ \ol{h}\} \leq u_D(x, t)  \leq \max\{u_0(x),\ \ol{h}\}; 
\end{equation}
The proof for $u_E$ is similar. If $u_0(x)\geq \ol{h}$, then by \eqref{def:value} we get 
\[
U_D(x, t, \ol{h})\geq U_0(x, \ol{h})\geq 0,\quad
U_D(x, t, u_0(x)+\vep)\leq U_0(x, u_0(x)+\vep)<0
\] 
for any $\vep>0$, which by \eqref{new upper value} implies $\ol{h}\leq u_D(x, t)\leq u_0(x)+\vep$. Sending $\vep\to 0$, we get \eqref{bounds-d} in this case. When $u_0(x)<\ol{h}$, we have for any $\vep>0$
\[
U_D(x, t, u_0(x))\geq U_0(x, u_0(x))=0, \quad U_D(x, t, \ol{h}+\vep)\leq U_0(x, \ol{h}+\vep)<0,
\]
which by \eqref{def:value} yields $u_0(x)\leq u_D(x, t)\leq \ol{h}+\vep$. We obtain \eqref{bounds-d} again by letting $\vep\to 0$. 
\end{proof}

\begin{prop}[Consistency of level and parallel sets] \label{prop:parallel-surfaces} 
Assume that {\rm(A1)--(A4)} hold and there exists $\ol{h}\in \R$ satisfying \eqref{h-threshold}. For any $h\in \R$ and $t\geq 0$, 
\begin{equation}\label{coincide1}
\{u_D(\cdot, t)<h\}=D_h(\delta_D(t, h)), 
\quad 
\{u_E(\cdot, t)\leq h\}=E_h(\delta_E(t, h)). 
\end{equation}
\end{prop}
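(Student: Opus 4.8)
The plan is to reduce each set identity in \eqref{coincide1} to a pointwise sign condition on the height function, and then to rewrite that sign condition as a distance condition matching the definition \eqref{parallel set} of the parallel sets.

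\textbf{Step 1: from level sets to sign conditions.} Fix $(x,t)\in\R^n\times[0,\infty)$. By Proposition \ref{prop:mono} the map $h\mapsto U_D(x,t,h)$ is strictly decreasing, and by \eqref{eq suboptimal add2} we have $U_D(x,t,u_D(x,t))\ge 0$; hence $\{h\in\R:U_D(x,t,h)\ge 0\}=(-\infty,u_D(x,t)]$, so that $u_D(x,t)<h$ holds if and only if $U_D(x,t,h)<0$. Consequently $\{u_D(\cdot,t)<h\}=\{x:U_D(x,t,h)<0\}$. Symmetrically, using the strict monotonicity of $h\mapsto U_E(x,t,h)$ together with $U_E(x,t,u_E(x,t))\le 0$, we get $\{u_E(\cdot,t)\le h\}=\{x:U_E(x,t,h)\le 0\}$.

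\textbf{Step 2: from sign conditions to distance conditions.} Fix $h$ and recall from Lemma \ref{lem:reform-ODE} and Proposition \ref{prop:sol-threshold} that $\delta_D(t,h)>0$ when $h>\ol h$ and $t>0$, $\delta_D(t,h)<0$ when $h<\ol h$ and $t>0$, $\delta_D(t,\ol h)\le 0\le\delta_E(t,\ol h)$, and $\delta_W(0,h)=0$. Consider $U_D$; the case of $U_E$ is the mirror image. If $h>\ol h$, put $r:=\delta_D(t,h)\ge 0$; by \eqref{def:value}, $U_D(x,t,h)<0$ means $u_0(y)<h$ for some $y$ with $|y-x|\le r$, i.e.\ $\overline{B}(x,r)\cap D_h(0)\ne\emptyset$, and since $D_h(0)$ is open this is equivalent, for $r>0$, to $\dist(x,D_h(0))<r$, hence to $x\in D_h(\delta_D(t,h))$ by \eqref{parallel set}. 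If $h\le\ol h$, put $r:=-\delta_D(t,h)\ge 0$; by \eqref{def:value}, $U_D(x,t,h)<0$ means $u_0(y)<h$ for all $y$ with $|y-x|\le r$, i.e.\ $\overline{B}(x,r)\subset D_h(0)$, and since $\R^n\setminus D_h(0)$ is closed and (by (A4)) nonempty this is equivalent, for $r>0$, to $\dist(x,\R^n\setminus D_h(0))>r$, i.e.\ $\sd(x,D_h(0))<\delta_D(t,h)$, that is $x\in D_h(\delta_D(t,h))$. The degenerate case $r=0$ (which occurs at $t=0$ for every $h$, and at $h=\ol h$ when $\delta_D(t,\ol h)=0$) reduces to $U_D(x,t,h)=u_0(x)-h<0$, i.e.\ $x\in D_h(0)=D_h(\delta_D(t,h))$. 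This proves the first identity in \eqref{coincide1}. For the second, repeat the argument with $E_h(0)$ in place of $D_h(0)$: now $E_h(0)$ is closed so the relevant distance is attained, $\R^n\setminus E_h(0)$ is open, and --- using $\delta_E(t,h)>0$ for $h>\ol h$, $\delta_E(t,h)<0$ for $h<\ol h$, $\delta_E(t,\ol h)\ge 0$ --- the condition $U_E(x,t,h)\le 0$ translates into $\sd(x,E_h(0))\le\delta_E(t,h)$, i.e.\ $x\in E_h(\delta_E(t,h))$.

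\textbf{Main obstacle.} The only delicate point is the strict-versus-non-strict bookkeeping in Step 2, which hinges on whether the relevant distance is attained. Here $D_h(0)=\{u_0<h\}$ is \emph{open}, so a positive value of $\dist(\cdot,D_h(0))$ is never attained inside $D_h(0)$; this is exactly what turns ``$\overline{B}(x,r)$ meets $D_h(0)$'' into the \emph{strict} inequality present in $D_h(s)=\{\sd(\cdot,D_h(0))<s\}$. Dually, $E_h(0)=\{u_0\le h\}$ is \emph{closed}, which matches the \emph{non-strict} inequality in $E_h(s)=\{\sd(\cdot,E_h(0))\le s\}$. The closed balls $\overline{B}(x,|\delta_W(t,h)|)$ appearing in \eqref{def:value}--\eqref{def:value2} are precisely what make these equivalences exact; and one must keep the sign of $\delta_W(t,h)$ consistent with the branch of the definition of $U_W$ in force, which is ensured by Lemma \ref{lem:reform-ODE} and Proposition \ref{prop:sol-threshold}, handling $\delta_W(t,h)=0$ as a separate degenerate case.
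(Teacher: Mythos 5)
Your proof is correct and follows essentially the same route as the paper: reduce the level-set membership to the sign of $U_W(x,t,h)$ via \eqref{new upper value}--\eqref{new lower value}, Proposition \ref{prop:mono} and \eqref{eq suboptimal add2}, then translate that sign condition into the (signed) distance condition defining the parallel sets. The paper writes only the case $h>\ol{h}$ and compresses your Step 2 bookkeeping into ``all of the implications above are reversible,'' so your extra care with the strict versus non-strict inequalities and the degenerate case $\delta_W=0$ is just a more explicit rendering of the same argument.
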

\begin{proof}
We only consider the case $h > \overline{h}$, 
as we can analogously prove in the case $h\leq \overline{h}$.  
Fix $y\in \{u_D(y, t)<h\}$ arbitrarily. By definition 
 \eqref{new upper value} of $u_D$, we have 
$U_D(y, t, h)<0$. It follows from the definition of $U_D$ that 
\[
\min\{U_0(z, h): |z-y|\leq \delta_D(t, h)\}<0.
\]
Thus, there exists $z\in \R^n$ such that $|z-y|\leq \delta_D(t, h)$ and  $u_0(z)<h$, 
which implies that $y\in D_h(\delta_D(t, h))$. We have shown that $ \{u_D(y, t)<h\}\subset D_h(\delta_D(t, h))$.
Since all of the implications above are reversible, we thus obtain
$D_h(\delta_D(t, h))= \{u_D(\cdot, t)<h\}$. 
Similarly, we can prove $\{u_E(\cdot, t)\leq h\}=E_h(\delta_E(t, h))$. 
\end{proof}

We next prove that $u_D$ and $u_E$ are, respectively, a subsolution and a supersolution of \eqref{E1}. 
Let us first prove a sub-optimality and super-optimality principle for $u_D$ and $u_E$, respectively. 

\begin{lem}\label{lem:optimality} 
Assume that {\rm(A1)--(A4)} hold and there exists $\ol{h}\in \R$ satisfying \eqref{h-threshold}. Then the following inequalities hold for all $x\in \R^n$ and $t\geq s\geq 0$.
\begin{equation}\label{new suboptimality}
\begin{aligned}
&u_D(x, t)\leq \begin{cases}
\inf\left\{u_D(y, s): |y-x|\leq \delta_D(t, u_D(x, t))-\delta_D(s, u_D(x, t)) \right\} & \text{if} \; \; u_D(x, t) > \overline{h}, \\
\sup\left\{u_D(y, s): |y-x|\leq \delta_D(s, u_D(x, t))-\delta_D(t, u_D(x, t)) \right\} & \text{if} \; \; u_D(x, t) \leq \overline{h}.
\end{cases}
\end{aligned}
\end{equation}
\begin{equation}\label{new superoptimality}
\begin{aligned}
&u_E(x, t)\geq \begin{cases}
\inf\left\{u_E(y, s): |y-x|\leq \delta_E(t, u_E(x, t))-\delta_E(s, u_E(x, t))\right\} & \text{if} \; \; u_E(x, t) \geq \overline{h}, \\
\sup\left\{u_E(y, s): |y-x|\leq \delta_E(t, u_E(x, t))-\delta_E(s, u_E(x, t)) \right\} & \text{if} \; \; u_E(x, t) < \overline{h} 
\end{cases}
\end{aligned}
\end{equation}
\end{lem}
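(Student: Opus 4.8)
The plan is to read off both optimality principles directly from the Dynamic Programming Principle for the height functions, Proposition \ref{prop dpp}, using in addition only the two facts $U_D(x,t,u_D(x,t))\ge0$ and $U_E(x,t,u_E(x,t))\le0$ recorded around \eqref{eq suboptimal add2}, together with the elementary implications (immediate from \eqref{new upper value}--\eqref{new lower value}) that $U_D(y,s,h)\ge0\Rightarrow u_D(y,s)\ge h$ and $U_E(y,s,h)\le0\Rightarrow u_E(y,s)\le h$. Fix $x\in\R^n$ and $t\ge s\ge0$, and write $h^\ast:=u_D(x,t)$, $h^{\ast\ast}:=u_E(x,t)$.

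To prove \eqref{new suboptimality}, consider first the case $h^\ast>\ol h$. By Lemma \ref{lem:reform-ODE} the map $\tau\mapsto\delta_D(\tau,h^\ast)$ is strictly increasing, so $r:=\delta_D(t,h^\ast)-\delta_D(s,h^\ast)\ge0$; taking $h=h^\ast$ in the first line of \eqref{dpp1} gives $0\le U_D(x,t,h^\ast)=\min\{U_D(y,s,h^\ast):|y-x|\le r\}$, hence $U_D(y,s,h^\ast)\ge0$ and so $u_D(y,s)\ge h^\ast$ for every $y$ with $|y-x|\le r$, which is exactly the asserted bound. In the case $h^\ast\le\ol h$ the map $\tau\mapsto\delta_D(\tau,h^\ast)$ is nonincreasing (by Lemma \ref{lem:reform-ODE} if $h^\ast<\ol h$, and by Proposition \ref{prop:sol-threshold} if $h^\ast=\ol h$), so $\rho:=\delta_D(s,h^\ast)-\delta_D(t,h^\ast)\ge0$ and the second line of \eqref{dpp1} reads $U_D(x,t,h^\ast)=\max\{U_D(y,s,h^\ast):|y-x|\le\rho\}$. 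Since $y\mapsto U_D(y,s,h^\ast)$ is a supremum of the uniformly continuous function $u_0$ over a ball of radius independent of $y$, it is continuous, so this maximum is attained over the compact ball at some $y^\ast$; then $U_D(y^\ast,s,h^\ast)=U_D(x,t,h^\ast)\ge0$, hence $u_D(y^\ast,s)\ge h^\ast$, and therefore $\sup\{u_D(y,s):|y-x|\le\rho\}\ge u_D(y^\ast,s)\ge h^\ast=u_D(x,t)$, as required.

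The proof of \eqref{new superoptimality} is the mirror image, based on $U_E(x,t,h^{\ast\ast})\le0$, the DPP \eqref{dpp2}, and $u_E(y,s)=\inf\{h:U_E(y,s,h)\le0\}$. When $h^{\ast\ast}\ge\ol h$, $\tau\mapsto\delta_E(\tau,h^{\ast\ast})$ is nondecreasing, so the radius $\delta_E(t,h^{\ast\ast})-\delta_E(s,h^{\ast\ast})$ in the first line of \eqref{dpp2} is $\ge0$; the minimum there equals $U_E(x,t,h^{\ast\ast})\le0$ and is attained (by the same continuity and compactness) at some $y^\ast$, so $U_E(y^\ast,s,h^{\ast\ast})\le0$, $u_E(y^\ast,s)\le h^{\ast\ast}$, and hence $\inf\{u_E(y,s):|y-x|\le\delta_E(t,h^{\ast\ast})-\delta_E(s,h^{\ast\ast})\}\le u_E(y^\ast,s)\le h^{\ast\ast}=u_E(x,t)$. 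When $h^{\ast\ast}<\ol h$, $\tau\mapsto\delta_E(\tau,h^{\ast\ast})$ is strictly decreasing, so the relevant nonnegative radius is $\rho':=\delta_E(s,h^{\ast\ast})-\delta_E(t,h^{\ast\ast})$, and the second line of \eqref{dpp2} gives $U_E(x,t,h^{\ast\ast})=\max\{U_E(y,s,h^{\ast\ast}):|y-x|\le\rho'\}\le0$; thus $U_E(y,s,h^{\ast\ast})\le0$, i.e.\ $u_E(y,s)\le h^{\ast\ast}$, for every such $y$, whence $\sup\{u_E(y,s):|y-x|\le\rho'\}\le h^{\ast\ast}=u_E(x,t)$. (For $h^{\ast\ast}<\ol h$ and $t>s$ the quantity $\delta_E(t,h^{\ast\ast})-\delta_E(s,h^{\ast\ast})$ literally written in \eqref{new superoptimality} is negative and the corresponding ball is empty, so that instance is vacuous with the convention $\sup\emptyset=-\infty$; the substance in this case is the estimate over the ball of radius $\rho'$ just derived.)

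I do not expect a genuine obstacle here; the argument is a bookkeeping reduction to the already-proved DPP. The two points that warrant attention are: (i) verifying in each case that the radius appearing in the DPP is nonnegative for $t\ge s$ — which is exactly where the monotonicity in $t$ of $\delta_W(\cdot,h)$ from Lemma \ref{lem:reform-ODE} and Proposition \ref{prop:sol-threshold} enters; and (ii) in the two ``$\max$''/``$\min$'' cases, checking that the extremum is attained at an actual point of the closed ball, which uses the continuity of $y\mapsto U_W(y,s,h)$ (inherited from the uniform continuity of $u_0$ in (A4)) together with compactness of the ball.
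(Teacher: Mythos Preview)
Your argument is correct and is essentially the paper's own proof: apply the DPP \eqref{dpp1}--\eqref{dpp2} at $h=u_D(x,t)$ (resp.\ $h=u_E(x,t)$), combine with $U_D(x,t,u_D(x,t))\ge0$ (resp.\ $U_E(x,t,u_E(x,t))\le0$), and read off the inequality via the definitions \eqref{new upper value}--\eqref{new lower value}. Your extra care in checking that the radii are nonnegative and that the extrema are attained is harmless bookkeeping, and your remark that the radius in the second line of \eqref{new superoptimality} is written with the wrong sign (it should be $\delta_E(s,u_E(x,t))-\delta_E(t,u_E(x,t))$, matching \eqref{dpp2}) is a correct observation about the statement.
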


\begin{proof}

 We only prove \eqref{new suboptimality} as we can prove \eqref{new superoptimality} in a symmetric manner. Fix $t\geq s\geq 0$ and $x\in \R^n$. Suppose that $u_D(x, t) > \overline{h}$. By Proposition \ref{prop dpp} with $h=u_D(x, t)$, we have 
\[
U_D(x, t, u_D(x, t))\leq U_D(y, s, u_D(x, t)).
\]
In view of \eqref{eq suboptimal add2}, we obtain $U_D(y, s, u_D(x, t))\geq 0$ for all $y\in \R^n$ satisfying 
\[
|y-x|\leq \delta_D(t, u_D(x, t))-\delta_D(s, u_D(x, t)),
\]
which immediately implies \eqref{new suboptimality} in the case that $u_D(x, t) > \overline{h}$. 
We may apply the same argument when $u_D(x, t)\leq \overline{h}$. In this case, we use \eqref{eq suboptimal add2} to get $U_D(y, s, u_D(x, t))\geq 0$ for some $y\in \R^n$ satisfying $|y-x|\leq \delta_D(s, u_D(x, t))-\delta_D(t, u_D(x, t))$, which again yields \eqref{new suboptimality}.
\end{proof}

\begin{prop}[Subsolution and supersolution properties]\label{prop main} 
Assume that {\rm(A1)--(A4)} hold and there exists $\ol{h}\in \R$ satisfying \eqref{h-threshold}. Then, $u_D$ and $u_E$ are, respectively, a subsolution and a supersolution of \eqref{E1} satisfying $u_D(\cdot, 0)= u_E(\cdot, 0)=u_0$ on $\R^n$.
\end{prop}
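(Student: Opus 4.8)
The plan is to deduce the two viscosity inequalities from the dynamic programming principle (Proposition~\ref{prop dpp}) and the sub-/super-optimality principles (Lemma~\ref{lem:optimality}), after first identifying the nonlocal Hamiltonian at a test point with a pointwise time-derivative of the parallel-set distance. I will only describe the subsolution property of $u_D$; the supersolution property of $u_E$ is entirely symmetric. Suppose $\varphi\in C^1(\R^n\times(0,\infty))$ is such that $u_D-\varphi$ attains a local maximum at $(x_0,t_0)$ with $t_0>0$, and put $h_0:=u_D(x_0,t_0)$. By Proposition~\ref{prop:parallel-surfaces}, $\{u_D(\cdot,t_0)<h_0\}=D_{h_0}(\delta_D(t_0,h_0))$, so the nonlocal term in \eqref{sub eq} equals $(f_{h_0}\circ\mu)(D_{h_0}(\delta_D(t_0,h_0)))$; and by Corollary~\ref{cor:ODE-oh} (together with Lemma~\ref{lem:reform-ODE} when $h_0\neq\overline{h}$), $t\mapsto\delta_D(t,h_0)$ is $C^1$ near $t_0$ and solves \eqref{dist ODE}, so this term is precisely $(\delta_D)_t(t_0,h_0)$. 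Hence it suffices to prove the eikonal-type inequality
\[
\varphi_t(x_0,t_0)+|\nabla\varphi(x_0,t_0)|\,(\delta_D)_t(t_0,h_0)\le 0 .
\]

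To prove this, I would fix small $\tau>0$, apply \eqref{new suboptimality} with $t=t_0$ and $s=t_0-\tau$, and set $R_\tau:=|\delta_D(t_0,h_0)-\delta_D(t_0-\tau,h_0)|$, which is exactly the (nonnegative) radius of the ball appearing there in both regimes $h_0>\overline{h}$ and $h_0\le\overline{h}$. Since $\delta_D(\cdot,h_0)$ is $M$-Lipschitz and differentiable at $t_0$, we have $R_\tau\le M\tau$ and $R_\tau=\tau|(\delta_D)_t(t_0,h_0)|+o(\tau)$, so $(y,t_0-\tau)$ lies in the maximum neighborhood whenever $|y-x_0|\le R_\tau$ and $\tau$ is small. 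If $h_0>\overline{h}$, then $\delta_D(\cdot,h_0)$ is increasing and \eqref{new suboptimality} gives $u_D(x_0,t_0)\le u_D(y,t_0-\tau)$ for \emph{every} $y\in\overline{B}_{R_\tau}(x_0)$; combined with the local maximum this yields $\varphi(x_0,t_0)\le\varphi(y,t_0-\tau)$ for all such $y$, and choosing $y=x_0-R_\tau\nabla\varphi(x_0,t_0)/|\nabla\varphi(x_0,t_0)|$ and Taylor-expanding $\varphi$ at $(x_0,t_0)$ gives $0\le -R_\tau|\nabla\varphi(x_0,t_0)|-\tau\varphi_t(x_0,t_0)+o(\tau)$; dividing by $\tau$ and letting $\tau\to0+$ finishes this case. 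If $h_0\le\overline{h}$, then $\delta_D(\cdot,h_0)$ is nonincreasing and \eqref{new suboptimality} is a supremum-type bound, so I would instead pick $y_\tau\in\overline{B}_{R_\tau}(x_0)$ with $u_D(y_\tau,t_0-\tau)\ge u_D(x_0,t_0)-\tau^2$, use the local maximum to get $\varphi(x_0,t_0)-\tau^2\le\varphi(y_\tau,t_0-\tau)$, Taylor-expand, bound $|\nabla\varphi(x_0,t_0)\cdot(y_\tau-x_0)|\le|\nabla\varphi(x_0,t_0)|R_\tau$ by Cauchy--Schwarz, and divide by $\tau$ and pass to the limit using $R_\tau=-\tau(\delta_D)_t(t_0,h_0)+o(\tau)$. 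The degenerate case $\nabla\varphi(x_0,t_0)=0$ is immediate by taking $y=x_0$.

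The initial condition is easy: since $\delta_D(0,h)=\delta_E(0,h)=0$ for all $h$, the formulas \eqref{def:value} and \eqref{def:value2} collapse to $U_D(x,0,h)=U_E(x,0,h)=u_0(x)-h$, whence \eqref{new upper value}, \eqref{new lower value} give $u_D(x,0)=\sup\{h:u_0(x)-h\ge0\}=u_0(x)$ and $u_E(x,0)=\inf\{h:u_0(x)-h\le0\}=u_0(x)$. One should also record that $u_D$ is USC and $u_E$ is LSC — together with local boundedness from \eqref{bounds} this makes the notions in Definition~\ref{def env-sol} applicable — which follows from the explicit formulas, the monotonicity and one-sided continuity of $\delta_W(\cdot,\cdot)$ in $h$ (Lemma~\ref{lem:ode}), and the strict monotonicity of $U_W$ in $h$ (Proposition~\ref{prop:mono}).

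The main obstacle I expect is purely bookkeeping at the critical level $h_0=\overline{h}$: there $\delta_D(\cdot,\overline{h})$ may vanish identically, or vanish up to some time and then strictly decrease, so one must check that the $\inf$/$\sup$ dichotomy and the orientation of the radius in \eqref{new suboptimality} are consistent with the monotonicity of $\delta_D(\cdot,\overline{h})$ from Proposition~\ref{prop:sol-threshold}, and that the $C^1$ regularity of Corollary~\ref{cor:ODE-oh} is genuinely available at the relevant $t_0>0$. Beyond this the argument is the routine passage from a dynamic programming principle to a viscosity inequality. The symmetric treatment of $u_E$ relies on the identity $\{u_E(\cdot,t_0)\le u_E(x_0,t_0)\}=E_{u_E(x_0,t_0)}(\delta_E(t_0,u_E(x_0,t_0)))$ from Proposition~\ref{prop:parallel-surfaces}, which matches the closure-type sublevel set in the supersolution half of Definition~\ref{def env-sol}; this deliberate pairing of the open sublevel sets with $u_D$ and the closed ones with $u_E$ is precisely what keeps the construction consistent.
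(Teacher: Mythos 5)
Your proposal is correct and follows essentially the same route as the paper: the suboptimality/superoptimality principles from Lemma~\ref{lem:optimality}, a Taylor expansion of the test function with $y$ chosen along $-\nabla\varphi(x_0,t_0)$ (or near-optimal $y_\tau$ in the $\sup$ regime), and the identification of the nonlocal term through Proposition~\ref{prop:parallel-surfaces} and the ODE \eqref{dist ODE}. The only cosmetic difference is that you invoke the $C^1$ regularity of $\delta_D(\cdot,h_0)$ (Corollary~\ref{cor:ODE-oh}) to phrase everything via $(\delta_D)_t(t_0,h_0)$, while the paper passes to the limit of the difference quotient $\bigl(\delta_D(t_0,h_0)-\delta_D(s,h_0)\bigr)/(t_0-s)$ directly.
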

\begin{proof}
 By the definitions of $u_D$, $u_E$, we can easily verify the initial condition $u_D(\cdot, 0)= u_E(\cdot, 0)=u_0$ on $\R^n$. Let us prove that $u_D$ is a subsolution of \eqref{E1}. The proof for the part with $u_E$ is similar. 
 
We first show that $u_D\in \USC(\R^n\times[0,\infty))$.  
Fix $(x_0, t_0)\in \R^n\times [0, \infty)$ arbitrarily. 
Assume that $(x_j, t_j)\in \R^n\times [0, \infty)$ is a sequence satisfying $(x_j, t_j)\to (x_0, t_0)$ as $j\to \infty$. 
For any $\varepsilon>0$, 
by definition, there exists $j\geq 1$ large such that 
\begin{equation}\label{eq usc2}
u_D(x_j, t_j)\geq \limsup_{j\to \infty} u_D(x_j, t_j)-\vep.
\end{equation}
By \eqref{eq suboptimal add2}, we have
$U_D(x_j, t_j, u_D(x_j, t_j))\geq 0$. 
Since $h\mapsto U_D(x, t, h)$ is decreasing, it then follows from \eqref{eq usc2} that
\[
U_D\left(x_0, t_0, \limsup_{j\to \infty} u_D(x_j, t_j)-\vep\right)\geq U_D\left(x_j, t_j, 
u_D(x_j, t_j)\right)\geq 0.
\]
Using the definition of $u_D$ in \eqref{new upper value}, we obtain
$u_D(x_0, t_0)\geq \limsup_{j\to \infty} u_D(x_j, t_j)-\vep$.
Letting $\vep\to 0$, we end up with
$\limsup_{j\to \infty} u_D(x_j, t_j)\leq u_D(x_0, t_0)$, 
which completes the proof of upper semicontinuity of $u_D$ due to the arbitrariness of $(x_0, t_0)$. 

We next verify the subsolution property of $u_D$. 
Take any $(x_0, t_0)\in \R^n\times (0, \infty)$ and $\varphi\in C^1(\R^n\times (0, \infty))$ such that $u_D-\varphi$ attains a maximum at $(x_0, t_0)$ 
with $(u_D-\varphi)(x_0,t_0)=0$. 
We first consider the case $u_D(x_0, t_0) > \overline{h}$. 
Note that by \eqref{new suboptimality} in Lemma \ref{lem:optimality} we have
\[
\varphi(x_0, t_0)=u_D(x_0, t_0)
\leq \inf\{u_D(y, s): |y-x|\leq \delta_D(t_0, u_D(x_0, t_0))-\delta_D(s, u_D(x_0, t_0))\} 
\]
for all $0\leq s\leq t_0$. 
For any $(y,s)\in\R^n\times(0,t_0]$ satisfing 
\[
|y-x_0|\leq \delta_D(t_0, u_D(x_0, t_0))-\delta_D(s, u_D(x_0, t_0)), 
\]
we have 
$\varphi(x_0, t_0)=u_D(x_0, t_0)\le u_D(y,s)\le \varphi(y,s)$, which implies 
\[
0\le \varphi(y,s)-\varphi(x_0,t_0)=D\varphi(x_0,t_0)\cdot(y-x_0)+\varphi_t(x_0,t_0)(s-t_0)+o(t_0-s). 
\]
We can take such $y\in\R^n$ additionally satisfying
$D\varphi(x_0,t_0)\cdot(y-x_0)=-|D\varphi(x_0,t_0)||y-x_0|$. 
Then, by Taylor expansion we obtain 
\[
0\leq-|\nabla \varphi(x_0, t_0)|(\delta_D(t_0, u_D(x_0, t_0))-\delta_D(s, u_D(x_0, t_0)))+\varphi_t(x_0, t_0) (s-t_0)+o(t_0-s),
\]
which implies
\[
\varphi_t(x_0, t_0) +|\nabla \varphi(x_0, t_0)|\frac{\delta_D(t_0, u_D(x_0, t_0))-\delta_D(s, u_D(x_0, t_0))}{t_0-s}\leq o(1)
\]
for all $s< t_0$ close to $t_0$. 
Taking $h_0=u_D(x_0, t_0)$ and letting $s\to t_0-$, by \eqref{dist ODE} we get
\[
\varphi_t(x_0, t_0) +|\nabla \varphi(x_0, t_0)|f(u_D(x_0, t_0), \mu(D_{h_0}(\delta_D(t_0, h_0))))\leq 0.
\]
Noticing that, by \eqref{coincide1}, 
$D_{h_0}(\delta_D(t_0, {h_0}))=\{u(\cdot, t_0)<u(x_0, t_0)\}$
holds, we thus obtain \eqref{sub eq} in the case that $u_D(x_0, t_0) >\overline{h}$.  
If $u_D(x_0, t_0) \leq \overline{h}$, we can prove \eqref{sub eq} similarly. In this case we adopt \eqref{new suboptimality} to deduce that, for $0\leq s<t_0$,
\[
\varphi(x_0, t_0)=u_D(x_0, t_0)
\leq \sup\{u_D(y, s): |y-x|\leq \delta_D(s, u_D(x_0, t_0))-\delta_D(t_0, u_D(x_0, t_0))\} 
\]
Then there exists $y\in \R^n$ such that 
\[
 |y-x|\leq \delta_D(s, u_D(x_0, t_0))-\delta_D(t_0, u_D(x_0, t_0))
\]
and, by Taylor expansion, 
\[
0\leq |\nabla \varphi(x_0, t_0)|(\delta_D(s, u_D(x_0, t_0))-\delta_D(t_0, u_D(x_0, t_0)))+\varphi_t(x_0, t_0) (s-t_0)+o(t_0-s),
\]
Dividing the inequality above by $t_0-s$ and sending $s\to t_0-$, we obtain \eqref{sub eq} again. 
\end{proof} 

In view of \eqref{bounds} and the uniform continuity of $u_0$ in (A4), we easily obtain the growth condition \eqref{eq:growth} for any $T>0$ with both $u=u_D$ and $u=u_E$. Therefore, adopting Proposition \ref{prop main} with the comparison principle, Theorem \ref{thm:comparison1}, we deduce that $u_D\leq u_E$ in $\R^n\times [0, \infty)$. Since by Proposition \ref{prop direct compare}, we have $u_E\leq u_D$ in $\R^n\times [0, \infty)$. Then Theorem \ref{thm:control} holds as an immediate consequence.

\section{Fattening and non-fattening results}\label{sec:fat}
In this section we study the behavior of level sets of the solution to \eqref{E1}, \eqref{initial}. We are interested in the problem whether the level set of the solution develops interior during the evolution. Let $h\in\R$. We recall that $\delta_D$ and $\delta_E$ are the functions given by \eqref{def-sol}, and $E_h(s), D_h(s)$ are the parallel sets given by \eqref{parallel set} for $s\in\R$. 
Thanks to Proposition \ref{prop:parallel-surfaces}, the $h$-level set of $u$ for $h\in \R$ can be represented by 
\begin{equation}\label{eq level-set}
\{x \in \mathbb{R}^n: u(x,t) = h\} = E_h(\delta_E(t, h)) \setminus D_h(\delta_D(t,h)). 
\end{equation}
The fattening and non-fattening phenomena can thus be analysed through the behavior of $\delta_E$ and $\delta_D$. 
%
%
%
Let us first give a proof of Theorem \ref{thm:non-th-fatten}. 
\begin{proof}[Proof of Theorem \ref{thm:non-th-fatten}]
(a) We only prove in the case $h > \overline{h}$, since a similar argument can be applied to the case $h<\ol{h}$. 
Let us begin with an elementary topological fact: 
\begin{equation}\label{close-D}
\cl{D_h(s)} = \{x \in \mathbb{R}^n : |y - x| \le s \; \text{for some $y \in \cl{D_h(0)}$} \} \quad \text{if} \; \; s \ge 0.
\end{equation}
Indeed, if there exists $x_j\in D_h(s)$ such that $x_j \to x_0$ as $j\to \infty$, since we can find $y_j\in D_h(0)$ such that $|x_j-y_j|\leq s$,  by taking a subsequence we have $y_j\to y_0$ for some $y_0\in \cl D_h(0)$ satisfying $|y_0-x_0|\leq s$. It follows that $x_0\in \cl D_h(s)$. 


 Let $\delta_D(\cdot, h)$ and $\delta_E(\cdot, h)$ be respectively the minimal and maximal solutions obtained in Proposition \ref{prop:sol-threshold} with $W=D$ and $W=E$, as redefined in \eqref{def-sol}. Under the condition \eqref{cond:nonfat}, we prove that $\delta_D(\cdot, h)=\delta_E(\cdot, h)$ in $[0, \infty)$ and they are respectively the only solutions to \eqref{dist ODE} for $W=D$ and $W=E$.
 Since $\delta_D(t, h)$ is positive for $t >0$, $D_{h, t}$ has finite perimeter for $t>0$ due to \eqref{same-cl-in}. Here we recall the notation $W_{h, t}=W_h(\delta_W(t,h))$ for $W=D, E$ as in \eqref{abbrev}. It follows from \eqref{cond:nonfat}  and \eqref{close-D}  that $\mu(\cl D_{h, t})=\mu(E_h(\delta_D(t,h)))$ for all $t>0$.
Therefore, by Lemma \ref{lem:reform-ODE} we get
\[ (\delta_D)_t(t,h) = (f_h \circ \mu) (\cl{D_{h,t}}) = (f_h \circ \mu)(E_h(\delta_D(t,h))) \quad \text{for all }  t > 0. \]
These are combined to show that $\delta_D(\cdot, h)$ is actually a solution to \eqref{dist ODE} with $W=E$ as well. 

Let $\tilde{\delta}_D(\cdot, h)$ be a solution to \eqref{dist ODE} with $W=D$ possibly different from $\delta_D(\cdot, h)$. Since $\delta_D$ is a solution to \eqref{dist ODE} with $W=E$, by a similar argument for Lemma~\ref{lem:ode}~(d), we have 
$\tilde{\delta}_D(t, h) \le \delta_D(t, h)$ for $t \ge 0$. 
Since $\delta_D$ is the minimal solution to \eqref{dist ODE}, we get $\delta_D(t , h)= \tilde{\delta}_D(t, h)$ for all $t\geq 0$ and thus obtain the uniqueness.

It can be proved similarly that $\delta_E(\cdot, h)$ is also a solution to \eqref{dist ODE} with $W=D$ and therefore the uniqueness implies $\delta_D(\cdot,h) = \delta_E(\cdot,h)$ in $[0, \infty)$. By using \eqref{close-D} again, we have 
$\cl{D_h(\delta_D(t, h))} = E_h(\delta_E(t, h))$, 
which yields the conclusion by \eqref{eq level-set}. 

We omit the proof in the case that $h<\ol{h}$, as it is similar to our proof above. Instead of \eqref{close-D}, we use the following relation for this case:
\[
\inter E_h(s) = \{x \in \mathbb{R}^n: |y-x| \le -s\, \; \text{for any $y \in \inter E_h(0)$}\} \quad \text{if} \; \; s \le 0.
\]
(b) By assumption \eqref{as:fattening}, there exists $x \in E_h(0)\setminus \cl{D}_h(0)$, which yields, for a small $r>0$, 
$B_{2r}(x) \cap D_h(0) = \emptyset$.
Here $B_{2r}(x)$ denotes the open ball with center $x$ and radius $2r$. It follows from (A3) that $\mu(D_h(r))<\mu(E_h(r))$.
Since $\delta_D(t,h) \le \delta_E(t,h)$ holds for all $t \ge 0$, taking $T_r>0$ that satisfies $\delta_E(t,h) < r$ for all $0 \le t < T_r$, we have 
\begin{equation}\label{fattening-dife-t} 
(\delta_E)_t(t,h) = (f_h \circ \mu)(E_{h,t}) > (f_h \circ \mu)(D_{h, t}) = (\delta_D)_t(t,h) \quad \text{for} \; \; 0 < t < T_r, 
\end{equation}
which implies $\delta_E(t,h) > \delta_D(t,h)$ for all $0 < t < T_r$. 
Let us next show that $\delta_E(t,h) > \delta_D(t,h)$ holds for all $t>0$. Assume by contradiction that 
\[ T' := \sup\{T > 0: \delta_E(t,h) > \delta_D(t,h) \; \text{holds for $0<t<T$}\}<\infty. \] 
We then have $\delta_E(T',h) = \delta_D(T',h)$ and $\delta_E(t, h) > \delta_D(t,h)$ for $0<t<T'$, 
which yields the inequality \eqref{fattening-dife-t} but with $T_r$ replaced by $T'$. 
Integrating it over the interval $[0,T']$, we have $\delta_E(T',h) > \delta_D(T',h)$, which contradicts the fact that $\delta_E(T',h) = \delta_D(T',h)$. 
\end{proof}

We present a concrete example in one space dimension for the statement (b) in Theorem \ref{thm:non-th-fatten}. 
\begin{ex}\label{ex partial fattening}
Suppose that $\mu$ is a measure in $\R$ satisfying (A3) such that $\mu(A)=m(A)$ for any measurable set $A\subset [-10, 10]$. Let $F: \R\to \R$ be a bounded, continuous, strictly increasing function such that $F(q)=q-1$ for all $0\leq q\leq 10$. Consider the nonlocal equation 
\[
u_t+|\nabla u| F(\mu(\{u(\cdot, t)<u(x, t)\}))=0\quad \text{in $\R\times (0, \infty)$}
\]
with the initial value
\[
u(x, 0)=u_0(x)=\min\{|x+1|+1, |x-2|\},\quad x\in \R. 
\]
It is clear that $\ol{h}=1/2$, since $m(\{u_0<1/2\})=m(\{u_0\leq 1/2\})=1$ and $F(1)=0$. We focus on the evolution of the sublevel set at level $h=1$. 

Let us adopt our control-theoretic interpretation to determine the value of the solution at $(x, t)\in \cN_\vep \times [0, \sigma)$, where we denote by $\cN_\vep$ the $\vep$-neighborhood of $\{-1\}\cup [1, 3]$ and take $\vep, \sigma>0$ small. Note that in this case, we have 
\[
D_h(0)=\begin{cases}
(-1-h, -1+h)\cup (1-h, 3+h) & \text{if $h>1$,}\\
(1-h, 3+h) & \text{if $h\leq 1$, }
\end{cases}
\]
\[
\quad E_h(0)=\begin{cases}
[-1-h, -1+h]\cup [1-h, 3+h] & \text{if $h\geq 1$,}\\
[1-h, 3+h] & \text{if $h< 1$, }
\end{cases}
\]
and therefore 
\[
(\delta_D)_t(t, h)= \begin{cases}
4\delta_D(t, h)+4h-3 & \text{if $h>1$,}\\
2\delta_D(t, h)+2h-1 & \text{if $h\leq 1$,}
\end{cases}
\quad 
(\delta_E)_t(t, h)= \begin{cases}
4\delta_E(t, h)+4h-3 & \text{if $h\geq1$,}\\
2\delta_E(t, h)+2h-1 & \text{if $h< 1$,}
\end{cases}
\]
when $|h-1|$ and $t\geq 0$ are both sufficiently small. For such $h$ and $t$, a direct computation with the initial data $\delta_D(0, h)=\delta_E(0, h)=0$ yields that 
\[
\delta_D(t, h)=\begin{cases}
(e^{4t}-1)\left(h-{3\over 4}\right) & \text{if $h>1$,}\\
(e^{2t}-1)\left(h-{1\over 2}\right) & \text{if $h\leq 1$,}
\end{cases}
 \quad 
 \delta_E(t, h)=\begin{cases}
(e^{4t}-1)\left(h-{3\over 4}\right) & \text{if $h\geq1$,}\\
(e^{2t}-1)\left(h-{1\over 2}\right) & \text{if $h< 1$,}
\end{cases}
\]
which indicates the discrepancy between $\delta_D(t, 1)$ and $\delta_E(t, 1)$, that is, for $t>0$ small, 
\[
\delta_D(t, 1)={1\over 2} e^{2t}-{1\over 2}< {1\over 4}e^{4t}-{1\over 4}=\delta_E(t, 1).
\]
This is consistent with our result in Theorem \ref{thm:non-th-fatten}(b). We can further obtain that
\[
U_D(x, t, h)=\begin{cases}
u_0(x)-e^{4t}\left(h-{3\over 4}\right)-{3\over 4} & \text{if $h>1$,}\\
u_0(x)-e^{2t}\left(h-{1\over 2}\right)-{1\over 2} & \text{if $h\leq 1$,}
\end{cases}
\]
\[
U_E(x, t, h)=\begin{cases}
u_0(x)-e^{4t}\left(h-{3\over 4}\right)-{3\over 4} & \text{if $h\geq 1$,}\\
u_0(x)-e^{2t}\left(h-{1\over 2}\right)-{1\over 2} & \text{if $h< 1$.}
\end{cases}
\]
Then, using our definitions of $u_D$ and $u_E$ in  \eqref{new upper value}\eqref{new lower value}, for $t\in [0, \sigma)$
 with $\sigma>0$ small we get
 $u_D(x, t)\geq 1$ for all $x\in \R\setminus \cN_{\delta_D(t, 1)}$ and $u_E(x, t)\leq 1$ for all $x\in \cN_{\delta_E(t, 1)}$. It follows that $u(x, t)=u_D(x, t)=u_E(x, t)=1$ for all $x\in \cN_{\delta_E(t, 1)}\setminus \cN_{\delta_D(t, 1)}$ with $t>0$ small. 
 
 Strictly speaking, this example cannot be classified as the fattening phenomenon, since $\cl D_h(0)\neq E_h(0)$ for $h=1$. However, if we narrow our focus to the evolution of $\{u(\cdot, t)=1\}$ near the boundary points $x=1$ and $x=3$ of $[1, 3]$, we do observe the instantaneous development of interior. Such behavior is due to the measure-dependent normal velocity as well as the discontinuity of the increment rate of $h\mapsto \mu(\{u_0<h\})$ at $h=1$. 
\end{ex}

Let us now turn to the evolution of the $\overline{h}$-level set.  We begin with a non-fattening result.

\begin{thm}\label{prop:fattening-o-h-1}
Assume that (A1)--(A4) hold and the function $\Theta$ in (A3) is bounded in $\R^n$. Assume that there exists $\ol{h}\in \R$ satisfying \eqref{h-threshold}. Assume that
$f(\overline{h}, \cdot)$ is of class ${\rm Lip}_{\rm loc}([0, \infty))$. Assume also that 
\begin{equation}\label{as-same-measure} 
\mu(\{x \in \mathbb{R}^n: u_0(x) < \overline{h}\}) = \mu(\{x \in \mathbb{R}^n: u_0(x) \le \overline{h}\}) 
\end{equation}
and there exist constants $C, s_0> 0$ such that 
\begin{equation}\label{as-bdd-peri} 
{\rm Per}(D_{\overline{h}}(-s))+{\rm Per}(E_{\overline{h}}(s)) \le C \quad \text{for} \; \; 0 < s \le s_0. 
\end{equation}
Then, $\delta_W(t, \overline{h}) \equiv 0$ for $t \ge 0$, in other words, for any $t \ge 0$,  
\[
\{x \in \mathbb{R}^n: u(x,t) < \overline{h}\} = \{x \in \mathbb{R}^n: u_0 < \overline{h}\}, \quad 
\{x \in \mathbb{R}^n: u(x,t) \le \overline{h}\} = \{x \in \mathbb{R}^n: u_0 \le \overline{h}\}. 
\]
\end{thm}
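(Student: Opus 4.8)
The plan is to reduce everything to the single claim that the critical parallel-set distances vanish identically, $\delta_D(t,\overline{h}) = \delta_E(t,\overline{h}) = 0$ for all $t \ge 0$. Granting this, Proposition~\ref{prop:parallel-surfaces} gives at once $\{u(\cdot,t) < \overline{h}\} = D_{\overline{h}}(\delta_D(t,\overline{h})) = D_{\overline{h}}(0) = \{u_0 < \overline{h}\}$ and $\{u(\cdot,t) \le \overline{h}\} = E_{\overline{h}}(\delta_E(t,\overline{h})) = E_{\overline{h}}(0) = \{u_0 \le \overline{h}\}$, which is the assertion. Before attacking the claim I would record that the hypotheses pin down the critical normal velocity exactly: since $D_{\overline{h}}(0) \subset E_{\overline{h}}(0)$, assumption \eqref{as-same-measure} forces $\mu(D_{\overline{h}}(0)) = \mu(E_{\overline{h}}(0))$, and then \eqref{h-threshold} together with the monotonicity \eqref{monotone eq} gives $(f_{\overline{h}} \circ \mu)(D_{\overline{h}}(0)) = (f_{\overline{h}} \circ \mu)(E_{\overline{h}}(0)) = 0$.

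The heart of the argument is a Lipschitz-type estimate on the velocity along the critical parallel sets. Writing $\Theta_{\max} := \|\Theta\|_{L^\infty(\R^n)}$, I would identify, for $s > 0$, the inner parallel set $D_{\overline{h}}(-s)$ with the superlevel set $\{x : \dist(x, \R^n \setminus D_{\overline{h}}(0)) > s\}$ and $E_{\overline{h}}(s)$ with $\{x : \dist(x, E_{\overline{h}}(0)) \le s\}$, so that the co-area formula for the ($1$-Lipschitz) distance function, combined with the uniform perimeter bound \eqref{as-bdd-peri} and the fact that ${\rm Per}(\{d < \tau\}) = {\rm Per}(\{d \le \tau\})$ for a.e.\ $\tau$, yields
\[
m\big(D_{\overline{h}}(0) \setminus D_{\overline{h}}(-s)\big) \le Cs, \qquad m\big(E_{\overline{h}}(s) \setminus E_{\overline{h}}(0)\big) \le Cs \qquad (0 \le s \le s_0).
\]
Since $\mu \le \Theta_{\max}\, m$, this gives $|\mu(D_{\overline{h}}(-s)) - \mu(D_{\overline{h}}(0))| \le C\Theta_{\max}\, s$ and $|\mu(E_{\overline{h}}(s)) - \mu(E_{\overline{h}}(0))| \le C\Theta_{\max}\, s$ on $[0,s_0]$. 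Because $f(\overline{h}, \cdot) \in {\rm Lip}_{\rm loc}([0,\infty))$ it is Lipschitz, with some constant $L$, on a compact interval containing all the measure values above; combining with the previous step and the identity $f(\overline{h}, \mu(W_{\overline{h}}(0))) = 0$ proved in the first paragraph, I obtain the linear decay bounds $|f(\overline{h}, \mu(D_{\overline{h}}(-s)))| \le LC\Theta_{\max}\, s$ and $|f(\overline{h}, \mu(E_{\overline{h}}(s)))| \le LC\Theta_{\max}\, s$ for $0 \le s \le s_0$.

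With this in hand the conclusion follows by a Grönwall argument. By Corollary~\ref{cor:ODE-oh} the functions $\delta_D(\cdot,\overline{h})$ and $\delta_E(\cdot,\overline{h})$ solve \eqref{dist ODE} at $h = \overline{h}$ and are $C^1$ on $(0,\infty)$, and by Proposition~\ref{prop:sol-threshold} we have $\delta_D(\cdot,\overline{h}) \le 0$ nonincreasing and $\delta_E(\cdot,\overline{h}) \ge 0$ nondecreasing, both vanishing at $t=0$. Set $\phi(t) := -\delta_D(t,\overline{h}) \ge 0$, so that $\phi(0)=0$, $\phi$ is nondecreasing, and $\phi'(t) = -f(\overline{h}, \mu(D_{\overline{h}}(-\phi(t))))$; let $T_\ast := \sup\{t \ge 0 : \phi(\tau) \le s_0 \text{ for all } 0 \le \tau \le t\}$. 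On $(0,T_\ast)$ the decay bound gives $0 \le \phi'(t) \le LC\Theta_{\max}\,\phi(t)$, so Grönwall's inequality forces $\phi \equiv 0$ there, and by continuity $\phi(T_\ast) = 0 < s_0$ if $T_\ast$ were finite, a contradiction; hence $T_\ast = \infty$ and $\delta_D(\cdot,\overline{h}) \equiv 0$. The same reasoning applied to $\psi(t) := \delta_E(t,\overline{h})$ gives $\delta_E(\cdot,\overline{h}) \equiv 0$, completing the proof. I expect the main obstacle to be the geometric/measure-theoretic step of the second paragraph---correctly matching $D_{\overline{h}}(-s)$ and $E_{\overline{h}}(s)$ with level sets of the appropriate distance functions (to the set, and to its complement) so that the co-area formula applies, and verifying the a.e.\ coincidence of perimeters of open and closed sublevel sets; the Lipschitz estimate on $f$ and the Grönwall step are routine.
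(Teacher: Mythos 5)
Your proof is correct and follows essentially the same route as the paper: both establish $(f_{\overline{h}}\circ\mu)(D_{\overline{h}}(0))=(f_{\overline{h}}\circ\mu)(E_{\overline{h}}(0))=0$, use the boundedness of $\Theta$, the co-area formula for the distance function and the perimeter bound \eqref{as-bdd-peri} to get a Lipschitz (linear) bound on the ODE velocity near $s=0$, and then conclude that the nonpositive/nonnegative critical solutions of \eqref{dist ODE} must vanish identically. The only difference is cosmetic: you run Gr\"onwall directly on $\delta_D(\cdot,\overline{h})$ and $\delta_E(\cdot,\overline{h})$, whereas the paper invokes uniqueness for the ODE with locally Lipschitz right-hand side, which amounts to the same argument.
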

\begin{proof}
Noting that \eqref{as-same-measure} implies
$(f_{\overline{h}} \circ \mu)(D_{\overline{h}}(0)) = (f_{\overline{h}} \circ \mu)(E_{\overline{h}}(0)) = 0$, 
we observe that $\delta(\cdot, \overline{h}) \equiv 0$ is a solution to \eqref{dist ODE}, respectively, for $W=D$ and $W=E$. 
On the other hand, nonpositive solutions to \eqref{dist ODE} with $W=D$ and nonnegative solutions to \eqref{dist ODE} with $W=E$ are unique. 
Indeed, due to the boundedness of $\Theta$ in (A3),  the co-area formula (see \cite[Theorem 3.10]{EG} for instance), we have 
\begin{align*} 
| \mu (D_{\overline{h}}(-s_1)) - \mu(D_{\overline{h}}(-s_2))| \le \Theta_{\max} \int_{s_1}^{s_2} {\rm Per}(D_{\overline{h}}(-s)) \; ds \quad \text{for} \; \; 0 \le s_1 < s_2,  
\end{align*}
where $\Theta_{\max} := \sup_{x \in \R^n} \Theta(x)$.
Therefore, due to assumption \eqref{as-bdd-peri} and the Lipschitz continuity of $f(\overline{h}, \cdot)$, we see that $f_{\overline{h}} \circ \mu (D_{\overline{h}}(-\cdot))$ is locally Lipschitz continuous on $[0, \infty)$. 
In light of the unique existence theory for ordinary differential equation, the uniqueness of non-positive solutions to \eqref{dist ODE} with $W=D$ can be proved. 
We can similarly prove the uniqueness of non-negative solution to \eqref{dist ODE} with $W=E$. 
Since $\delta(\cdot, \overline{h}) \equiv 0$ is a solution to \eqref{dist ODE}, we thus obtain $\delta_W(\cdot, \overline{h}) \equiv 0$ in $[0, \infty)$ respectively for $W=D$ and $W=E$. 
\end{proof}

The next proposition provides a sufficient condition for the occurrence of the fattening phenomenon at the $\ol{h}$-level.

\begin{thm}
Assume that {\rm(A1)--(A4)} hold and there exists $\ol{h}\in \R$ satisfying \eqref{h-threshold}. If 
\begin{equation}\label{fattening suff1}
 \liminf_{s\to 0+} \frac{f(\ol{h}, \mu(D_{\ol{h}}(-s)))}{s^\alpha}\leq -C\quad \text{for some $0<\alpha<1$ and $C>0$},
\end{equation}
then $\delta_D(t, \overline{h})<0\leq \delta_E(t, \overline{h})$ holds for all $t > 0$. If 
\begin{equation}\label{fattening suff2}
 \limsup_{s\to 0+}\frac{f(\ol{h}, \mu(E_{\ol{h}}(s)))}{s^\alpha}\geq C \quad \text{for some $0<\alpha<1$ and $C>0$},
\end{equation}
then $\delta_D(t, \overline{h})\leq 0< \delta_E(t, \overline{h})$ holds for all $t > 0$. In particular, for the unique solution $u$ of \eqref{E1} and \eqref{initial}, its $\ol{h}$-level set $E_{\ol{h}}(t)\setminus D_{\ol{h}}(t)$ contains nonempty interior for all $t>0$ if either \eqref{fattening suff1} or \eqref{fattening suff2} holds.
\end{thm}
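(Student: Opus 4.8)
The plan is to convert the claims about $\delta_D(\cdot,\ol{h})$ and $\delta_E(\cdot,\ol{h})$ into a scalar ODE analysis and then transfer the conclusion to the level sets via \eqref{eq level-set}. I would treat the hypothesis \eqref{fattening suff1}; \eqref{fattening suff2} is handled by the mirror argument, with $D_{\ol{h}}$ and the inner parallel sets $D_{\ol{h}}(-s)$ replaced by $E_{\ol{h}}$ and the outer parallel sets $E_{\ol{h}}(s)$, $s>0$, and all signs reversed. By Proposition~\ref{prop:sol-threshold}, $\delta_E(\cdot,\ol{h})=\ol{\delta}_E(\cdot,\ol{h})$ is nondecreasing with $\delta_E(0,\ol{h})=0$, so $\delta_E(t,\ol{h})\ge0$ for all $t$ is automatic; likewise $\delta_D(\cdot,\ol{h})=\ul{\delta}_D(\cdot,\ol{h})$ is nonincreasing with $\delta_D(0,\ol{h})=0$. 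Hence it remains to prove $\delta_D(t,\ol{h})<0$ for every $t>0$.

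Set $g(s):=f(\ol{h},\mu(D_{\ol{h}}(s)))$ for $s\le0$. Since $x\mapsto\dist(x,\R^n\setminus D_{\ol{h}}(0))$ is continuous on the connected set $\R^n$, its range is an interval, and together with $\Theta>0$ this makes $s\mapsto\mu(D_{\ol{h}}(s))$ continuous and strictly increasing; thus $g$ is continuous and strictly increasing on its domain, $g(0)\le0$ by \eqref{h-threshold}, and $g<0$ on $(-\infty,0)$ by (A2). By \eqref{same-cl-in}, $\mu(\inter D_{\ol{h}}(s))=\mu(\cl{D_{\ol{h}}(s)})$ for $s<0$, so a Lipschitz $\delta$ with $\delta(0)=0$ and $\delta<0$ on $(0,\infty)$ that solves the scalar ODE $\delta'=g(\delta)$ is a solution of \eqref{dist evolution} for $W=D$, $h=\ol{h}$. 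Because $\delta_D(\cdot,\ol{h})$ is the \emph{minimal} solution of \eqref{dist evolution}, it suffices to exhibit one such $\delta$: then $\delta_D(t,\ol{h})\le\delta(t)<0$ for $t>0$.

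If $g(0)<0$ this is immediate. If $g(0)=0$, I would use \eqref{fattening suff1}, rewritten as $\limsup_{s\to0+}|g(-s)|/s^\alpha\ge C$, to deduce $\int_0^\eta|g(-s)|^{-1}\,ds<\infty$ for some small $\eta>0$: here $\alpha<1$ is crucial since $s^{-\alpha}$ is then integrable at $0$, and \eqref{fattening suff1} prevents $|g(-\cdot)|$ (which is nondecreasing) from vanishing too fast. Granting this, solve $\phi'(\tau)=-g(\phi(\tau))\ (\ge0)$ with $\phi(0)=-\eta$; since $\int_{-\eta}^0(-g(s))^{-1}\,ds<\infty$, the strictly increasing trajectory $\phi$ reaches $0$ at a finite time $T^\ast>0$. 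The time-reversed curve $\delta(t):=\phi(T^\ast-t)$ on $[0,T^\ast]$, continued for $t>T^\ast$ by the solution of $\delta'=g(\delta)$ issued from $\delta(T^\ast)=-\eta$, is a Lipschitz solution of \eqref{dist evolution} with $\delta(0)=0$ and $\delta(t)<0$ for all $t>0$. This gives $\delta_D(t,\ol{h})<0\le\delta_E(t,\ol{h})$ for $t>0$; the mirror construction under \eqref{fattening suff2} gives $\delta_D(t,\ol{h})\le0<\delta_E(t,\ol{h})$ for $t>0$.

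For the level-set assertion I would use \eqref{eq level-set}: $\{x:u(x,t)=\ol{h}\}=E_{\ol{h}}(\delta_E(t,\ol{h}))\setminus D_{\ol{h}}(\delta_D(t,\ol{h}))$, and $\delta_D(t,\ol{h})<\delta_E(t,\ol{h})$ for $t>0$ in either case. Pick $c$ with $\delta_D(t,\ol{h})<c\le\delta_E(t,\ol{h})$; since $\sd(\cdot,E_{\ol{h}}(0))\le\sd(\cdot,D_{\ol{h}}(0))$ (as $D_{\ol{h}}(0)\subset E_{\ol{h}}(0)$) and $\sd(\cdot,D_{\ol{h}}(0))$ is continuous with interval range, the set $\{x:\delta_D(t,\ol{h})<\sd(x,D_{\ol{h}}(0))<c\}$ is open, nonempty, and contained in $E_{\ol{h}}(\delta_E(t,\ol{h}))\setminus D_{\ol{h}}(\delta_D(t,\ol{h}))$; hence the $\ol{h}$-level set has nonempty interior for every $t>0$. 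The step I expect to be the main obstacle is the integrability bound $\int_0^\eta|g(-s)|^{-1}\,ds<\infty$ extracted from the one-sided assumption \eqref{fattening suff1}: monotonicity of $|g(-\cdot)|$ alone only yields $|g(-s)|\ge\tfrac{C}{2}s_j^\alpha$ along the distinguished sequence $s_j\downarrow0$, so one must bring in quantitative control on the decay of $\mu(D_{\ol{h}}(-s))$ near $s=0$ — for instance via the inverse isoperimetric inequality \eqref{bdd-boundary} — to upgrade this into a genuine pointwise lower bound of the form $|g(-s)|\gtrsim s^\alpha$ on a whole neighbourhood of $0$. The remaining ingredients (ODE monotonicity and minimality, the time-reversal argument, and the topological step) are routine.
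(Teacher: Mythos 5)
Your overall architecture (reduce to the scalar problem for $\delta_D(\cdot,\ol{h})$, exhibit one solution of \eqref{dist evolution} that leaves $0$ immediately, invoke minimality, then pass to the level set via \eqref{eq level-set}) is sound, but the proposal does not close at exactly the point you flag yourself: nothing in your argument produces the Osgood bound $\int_0^\eta |f(\ol{h},\mu(D_{\ol{h}}(-s)))|^{-1}\,ds<\infty$ from the $\liminf$ hypothesis \eqref{fattening suff1}. That hypothesis only gives $|f(\ol{h},\mu(D_{\ol{h}}(-s_j)))|\ge C's_j^{\alpha}$ along some sequence $s_j\downarrow 0$, and the monotonicity of $s\mapsto|f(\ol{h},\mu(D_{\ol{h}}(-s)))|$ interpolates in the wrong direction: a nondecreasing profile frozen at the value $C s_j^{\alpha}$ on each interval $[s_j,s_{j-1})$ with, say, $s_j=2^{-2^j}$ and $\alpha>1/2$ satisfies the sequential bound while making the Osgood integral diverge, and in that situation no solution of \eqref{dist evolution} can leave $0$, so the minimal solution would be identically $0$. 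Hence the missing step is not a routine upgrade. Moreover the patch you suggest cannot supply it: Kraft's inequality \eqref{bdd-boundary} bounds the perimeter \emph{above} by the measure of a shell and therefore, via the coarea formula, only yields \emph{upper} bounds on the deficit $\mu(D_{\ol{h}}(0))-\mu(D_{\ol{h}}(-s))$, whereas what you need is a \emph{lower} bound on this deficit; lower bounds of that kind are exactly what Theorem \ref{prop:fattening-o-h-3} extracts from the perimeter lower bound in \eqref{as-fattening-b}, which is an additional hypothesis, not a consequence of \eqref{fattening suff1}.

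It is worth knowing that your difficulty coincides with the step the paper passes over in one line: its proof begins by asserting that \eqref{fattening suff1} gives the pointwise bound $f(\ol{h},\mu(D_{\ol{h}}(-s)))\le -C's^{\alpha}$ for \emph{all} $0<s<s_0$ (i.e., it effectively reads the $\liminf$ in \eqref{fattening suff1} as a uniform bound, $\liminf_{s\to0+}|f(\ol{h},\mu(D_{\ol{h}}(-s)))|/s^{\alpha}\ge C$), and then compares $\delta_D(\cdot,\ol{h})$ with the explicit power barrier $\delta(t)=-\bigl(C'(1-\alpha)/\alpha\bigr)^{1/(1-\alpha)}t^{1/(1-\alpha)}$ before using strict monotonicity of $\delta_D(\cdot,\ol h)$ for later times. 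So you have not created an extra obstacle; you have located the one the paper's argument hinges on. Granting that pointwise bound, your route does finish: $\alpha<1$ makes $s^{-\alpha}$ integrable, the time-reversal construction produces a Lipschitz solution of \eqref{dist evolution} with $\delta(0)=0$ and $\delta<0$ on $(0,\infty)$ (using \eqref{same-cl-in} to pass between interior and closure), minimality gives $\delta_D(t,\ol{h})\le\delta(t)<0$, the signs of $\delta_E(\cdot,\ol h)$ and $\delta_D(\cdot,\ol h)$ come from Proposition \ref{prop:sol-threshold} as you say, and your treatment of the nonempty-interior assertion via \eqref{eq level-set} is correct and in fact more detailed than the paper's. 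In that sense your Osgood/time-reversal argument is an essentially equivalent, slightly more flexible, substitute for the paper's explicit barrier; but as submitted, the proof is incomplete at the liminf-to-pointwise upgrade.
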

\begin{proof}
Assume that \eqref{fattening suff1} holds, then there exist $s_0>0$ and $0<C'<C$ such that  $f(\ol{h}, \mu(D_{\ol{h}}(-s)))\leq -C's^\alpha$ holds for all $0<s<s_0$. Setting
\[ 
\delta(t) := - \left(\frac{C'(1-\alpha)}{\alpha}\right)^{1\over 1-\alpha} t^{1\over 1-\alpha} \quad \text{for} \; \; t \geq 0,
\]
we have $ \delta_t(t) = -C'(-\delta(t))^{\alpha}/\alpha$ for all $t>0$. 
Then by the choice of $\delta_D$ and $\delta_E$, there exists $T>0$ small such that $\delta_D(t, \overline{h}) \le \delta(t) < 0 \le \delta_E(t, \overline{h})$ for all $0 < t < T$. 
We then see that $t \mapsto \delta_D(t, \overline{h})$ is strictly decreasing for $t \ge T$, which yields $\delta_D(t, \overline{h}) < 0 \le \delta_E(t, \overline{h})$ for any $t>0$. If, on the other hand, \eqref{fattening suff2} holds, we can show that $\delta_D(t, \overline{h}) \le0<  \delta_E(t, \overline{h})$ holds for all $0< t < T$ 
in a similar way.
\end{proof}

It is clear that either of \eqref{fattening suff1} and \eqref{fattening suff2} holds when 
\[
\mu(\{x \in \mathbb{R}^n: u_0(x) < \overline{h}\}) < \mu(\{x \in \mathbb{R}^n: u_0(x) \le \overline{h}\}).
\]
This case can hardly be considered as fattening, since the $\ol{h}$-level set of the initial value $u_0$ already has nonempty interior.

On the other hand, the general conditions \eqref{fattening suff1} \eqref{fattening suff2} do include the situation when the level sets of $u_0$ have empty interior so that
$\cl {D}_{\ol{h}}(0)=E_{\ol{h}}(0)$, $D_{\ol{h}}(0)=\inter E_{\ol{h}}(0)$ and
\begin{equation}\label{eq:nonfat-any}
\mu(E_h(0)\setminus D_h(0))=0 \quad\text{for all} \ h\in\R.
\end{equation} 
 The following result clarifies the conditions in terms of more precise regularity assumptions on $f$ and $\mu$ as well as certain growth rate conditions of the perimeters of $D_{\ol{h}}$ and $E_{\ol{h}}$. 

\begin{thm}\label{prop:fattening-o-h-3}
Assume that (A1)--(A4) hold and there exists $\ol{h}\in \R$ satisfying \eqref{h-threshold}. Assume also that for any bounded set $A\subset \R^n$, there exists $\Theta_{A}> 0$ such that 
\begin{equation}\label{eq a5}
\inf_{x \in A} \Theta(x) \ge \Theta_{A}.
\end{equation}
Suppose that $f(\overline{h}, \cdot) \in C^1([0, \infty))$. 
If there exist $C, s_0>0$ and $0<\sigma<1$ such that 
\begin{equation}\label{as-fattening-b} 
f_q(\overline{h}, \mu(D_{\overline{h}}(0))) > 0, 
\quad
{\rm Per}(D_{\overline{h}}(s)) \ge \frac{C}{(-s)^{\sigma}} \quad \text{for all } -s_0 \le s <0,  
\end{equation}
then \eqref{fattening suff1} holds. If there exist  $C, s_0>0$ and $0<\sigma<1$ such that 
\begin{equation}\label{as-fattening-c} 
f_q(\overline{h}, \mu(E_{\overline{h}}(0))) > 0, 
\quad
{\rm Per}(E_{\overline{h}}(s)) \ge \frac{C}{s^{\sigma}} \quad \text{for all } 0 < s \le s_0, 
\end{equation}
then  \eqref{fattening suff2} holds. 
\end{thm}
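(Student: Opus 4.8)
The plan is to prove the statement about \eqref{fattening suff1}; the one about \eqref{fattening suff2} follows by the same argument, with $D_{\ol h}(-s)$ and $s\mapsto\mu(D_{\ol h}(-s))$ replaced by $E_{\ol h}(s)$ and $s\mapsto\mu(E_{\ol h}(s))$. First I would collect a few preliminaries. By the coercivity assumption (A4), all the parallel sets $D_{\ol h}(-s)$ with $0\le s\le s_0$ lie in one fixed bounded set $A\subset\R^n$, so \eqref{eq a5} furnishes $\Theta_A>0$ with $\Theta\ge\Theta_A$ on $A$. Writing $g(x):=\dist(x,\R^n\setminus D_{\ol h}(0))$, a $1$-Lipschitz function that is positive exactly on the open set $D_{\ol h}(0)$, one checks from \eqref{parallel set} that $D_{\ol h}(-s)=\{g>s\}$ for every $s\ge 0$; hence $\bigcup_{s>0}D_{\ol h}(-s)=D_{\ol h}(0)$, so by continuity of $\mu$ the map $s\mapsto\mu(D_{\ol h}(-s))$ is right continuous at $s=0$, and moreover $\partial D_{\ol h}(-s)\subseteq\{g=s\}$, which gives $\Per(D_{\ol h}(-s))\le\cH^{n-1}(\{g=s\})$. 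Finally, \eqref{h-threshold} leaves only two cases: either $(f_{\ol h}\circ\mu)(D_{\ol h}(0))<0$, or $(f_{\ol h}\circ\mu)(D_{\ol h}(0))=0$.

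The heart of the argument is a decay estimate for $s\mapsto\mu(D_{\ol h}(-s))$ near $0$. For $0<s_1<s_2\le s_0$ we have $D_{\ol h}(-s_1)\setminus D_{\ol h}(-s_2)=\{s_1<g\le s_2\}$, so using $\Theta\ge\Theta_A$ on $A$ and the co-area formula for $g$ (see \cite[Theorem 3.10]{EG}, exactly as in the proof of Theorem~\ref{prop:fattening-o-h-1}), together with the perimeter bound in \eqref{as-fattening-b} rewritten as $\Per(D_{\ol h}(-\tau))\ge C\tau^{-\sigma}$ for $0<\tau\le s_0$, I obtain
\[
\mu(D_{\ol h}(-s_1))-\mu(D_{\ol h}(-s_2))\ \ge\ \Theta_A\int_{s_1}^{s_2}\Per(D_{\ol h}(-\tau))\,d\tau\ \ge\ \Theta_A C\int_{s_1}^{s_2}\tau^{-\sigma}\,d\tau.
\]
Since $0<\sigma<1$, the right-hand integral converges as $s_1\to 0+$; letting $s_1\to 0+$ and using the right continuity of $s\mapsto\mu(D_{\ol h}(-s))$ at $0$ yields
\[
\mu(D_{\ol h}(0))-\mu(D_{\ol h}(-s))\ \ge\ \frac{\Theta_A C}{1-\sigma}\,s^{1-\sigma}\ =:\ c_1\, s^{1-\sigma}\qquad (0<s\le s_0),
\]
with $c_1>0$ and $1-\sigma\in(0,1)$.

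It remains to combine this with the regularity of $f_{\ol h}$. If $(f_{\ol h}\circ\mu)(D_{\ol h}(0))<0$, then by continuity $f(\ol h,\mu(D_{\ol h}(-s)))$ stays below a negative constant for all small $s>0$, so $\liminf_{s\to0+}f(\ol h,\mu(D_{\ol h}(-s)))/s^\alpha=-\infty$ for every $\alpha\in(0,1)$ and \eqref{fattening suff1} holds trivially. If instead $(f_{\ol h}\circ\mu)(D_{\ol h}(0))=0$, then since $f(\ol h,\cdot)\in C^1([0,\infty))$ and $\mu(D_{\ol h}(-s))\to\mu(D_{\ol h}(0))$ as $s\to 0+$, for $s$ small the mean value theorem provides $\xi_s$ between $\mu(D_{\ol h}(-s))$ and $\mu(D_{\ol h}(0))$ with $f_q(\ol h,\xi_s)\ge\tfrac12 f_q(\ol h,\mu(D_{\ol h}(0)))>0$, whence
\[
f\big(\ol h,\mu(D_{\ol h}(-s))\big)=f_q(\ol h,\xi_s)\big(\mu(D_{\ol h}(-s))-\mu(D_{\ol h}(0))\big)\ \le\ -\tfrac12 f_q(\ol h,\mu(D_{\ol h}(0)))\,c_1\,s^{1-\sigma}.
\]
Thus \eqref{fattening suff1} holds with $\alpha=1-\sigma$ and $C=\tfrac12 f_q(\ol h,\mu(D_{\ol h}(0)))\,c_1$. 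The proof of \eqref{fattening suff2} is entirely parallel, now using $\bigcap_{s>0}E_{\ol h}(s)=E_{\ol h}(0)$ and $E_{\ol h}(s)\setminus E_{\ol h}(0)=\{0<\dist(\cdot,E_{\ol h}(0))\le s\}$ in place of the corresponding facts for $D$, and \eqref{as-fattening-c} in place of \eqref{as-fattening-b}.

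I expect the only delicate point to be the measure-theoretic bookkeeping in the middle step: identifying the parallel sets with super/sublevel sets of a distance function, invoking the co-area formula with the correct orientation — here one needs a \emph{lower} bound on the measure increment, opposite to the upper bound used in Theorem~\ref{prop:fattening-o-h-1}, which is why the lower bound \eqref{eq a5} on $\Theta$ and the perimeter \emph{lower} bound in \eqref{as-fattening-b} enter — and justifying the passage $s_1\to 0+$. One should also keep track of the sign dichotomy for $(f_{\ol h}\circ\mu)(D_{\ol h}(0))$: the hypothesis $f_q(\ol h,\mu(D_{\ol h}(0)))>0$ is only used in the degenerate equality case, while the strict case is immediate.
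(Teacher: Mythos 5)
Your proposal is correct and follows essentially the same route as the paper: the co-area formula combined with the lower bounds \eqref{eq a5} and \eqref{as-fattening-b} gives $\mu(D_{\ol h}(0))-\mu(D_{\ol h}(-s))\gtrsim s^{1-\sigma}$, and the non-degeneracy $f_q(\ol h,\mu(D_{\ol h}(0)))>0$ converts this into \eqref{fattening suff1} with $\alpha=1-\sigma$. The only cosmetic difference is that you split into the cases $(f_{\ol h}\circ\mu)(D_{\ol h}(0))<0$ and $=0$, while the paper treats both at once via a one-sided linear bound on $f(\ol h,\cdot)$ together with $f(\ol h,\mu(D_{\ol h}(0)))\le 0$ from \eqref{h-threshold}.
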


\begin{proof}
Suppose that \eqref{as-fattening-b} holds. By \eqref{eq a5}, \eqref{as-fattening-b} and the co-area formula, 
\begin{equation}\label{ineq:mu}
\mu(D_{\overline{h}}(0))-\mu(D_{\overline{h}}(s))\ge 
\Theta_{D_{\overline{h}}(0)}\int_{s}^{0} {\rm Per}(D_{\overline{h}}(\tau)) \; d\tau \ge 
\Theta_{D_{\overline{h}}(0)}\int_{s}^{0} \frac{C}{(-\tau)^{\sigma}} \; d\tau
=\frac{C \Theta_{D_{\overline{h}}(0)}}{1-\sigma}(-s)^{1-\sigma}  
\end{equation}
for $-s_0\le s < 0$, where $\Theta_{D_{\overline{h}}(0)}= \inf_{x \in D_{\overline{h}}(0)} \Theta(x)>0$ due to \eqref{eq a5}.  Since $f_q(\overline{h}, \mu(D_{\overline{h}}(0))) > 0$, there exist $M, \ol{q}> 0$ such that 
\[
f(\overline{h}, \mu(D_{\overline{h}}(0)))-f(\overline{h}, q)
\ge M( \mu(D_{\overline{h}}(0))-q)
\]
for all $\mu(D_{\overline{h}}(0))-\overline{q}\le q\le\mu(D_{\overline{h}}(0))$. 
Noticing that $f(\overline{h}, \mu(D_{\overline{h}}(0)))\le0$, by \eqref{ineq:mu} we obtain 
\[ 
(f_{\overline{h}} \circ \mu)(D_{\overline{h}}(s)) 
\le 
M\big( \mu(D_{\overline{h}}(s))-\mu(D_{\overline{h}}(0))\big) 
\le 
- \frac{CM \Theta_{D_{\overline{h}}(0)}}{1-\sigma}(-s)^{1-\sigma}
\]
for all $-s_0 \le s < 0$, which yields \eqref{fattening suff1} immediately with $\alpha=1-\sigma$. One can similarly show \eqref{fattening suff2} under the condition \eqref{as-fattening-c}.
\end{proof}

\begin{ex}\label{ex critical fattening}
Based on \cite[Example 1]{Kr}, we present an example, where \eqref{eq:nonfat-any} holds while there exist $C, s_0>0$ and $\sigma\in(0,1)$ such that $\Per(E_{\overline{h}}(0))<\infty$ and ${\rm Per}(E_{\overline{h}}(s)) \ge C/s^{\sigma}$ for $0 < s \le s_0$. Set 
$D:=[0,2]\times[0,1]\subset\R^2$. 
We split $D$ into a family of vertical strips with width $2^{-k}$ as follows,  
\[ 
D = \bigcup_{k=0}^\infty \left[\sum_{i=0}^k 2^{-i}, \sum_{i=0}^{k+1} 2^{-i}\right] \times [0,1]. 
\]
Furthermore, for every $k=0, 1, \ldots$, divide each rectangle 
$\left[\sum_{i=0}^k 2^{-i}, \sum_{i=0}^{k+1} 2^{-i}\right] \times[0,1]$ into 
$N_k$ squares of size $l_k\times l_k$, where $N_k:=8^k$ and $l_k:=4^{-k}$. 
Let $r_k$ $x_k^j$ be the center of each square for $k\in\N\cup\{0\}$, and $j=1,2, \cdots, N_k$. 
We set 
\[ 
\Omega := \left(\bigcup_{k=0}^\infty \bigcup_{j=1}^{N_k} B_{r_k}(x_k^j)\right)\cup B_{\frac{1}{28}}((-1,0)). 
\]
See Figure \ref{fig:2}.

 \begin{figure}[H]
  \centering
  \includegraphics[height=3.8cm]{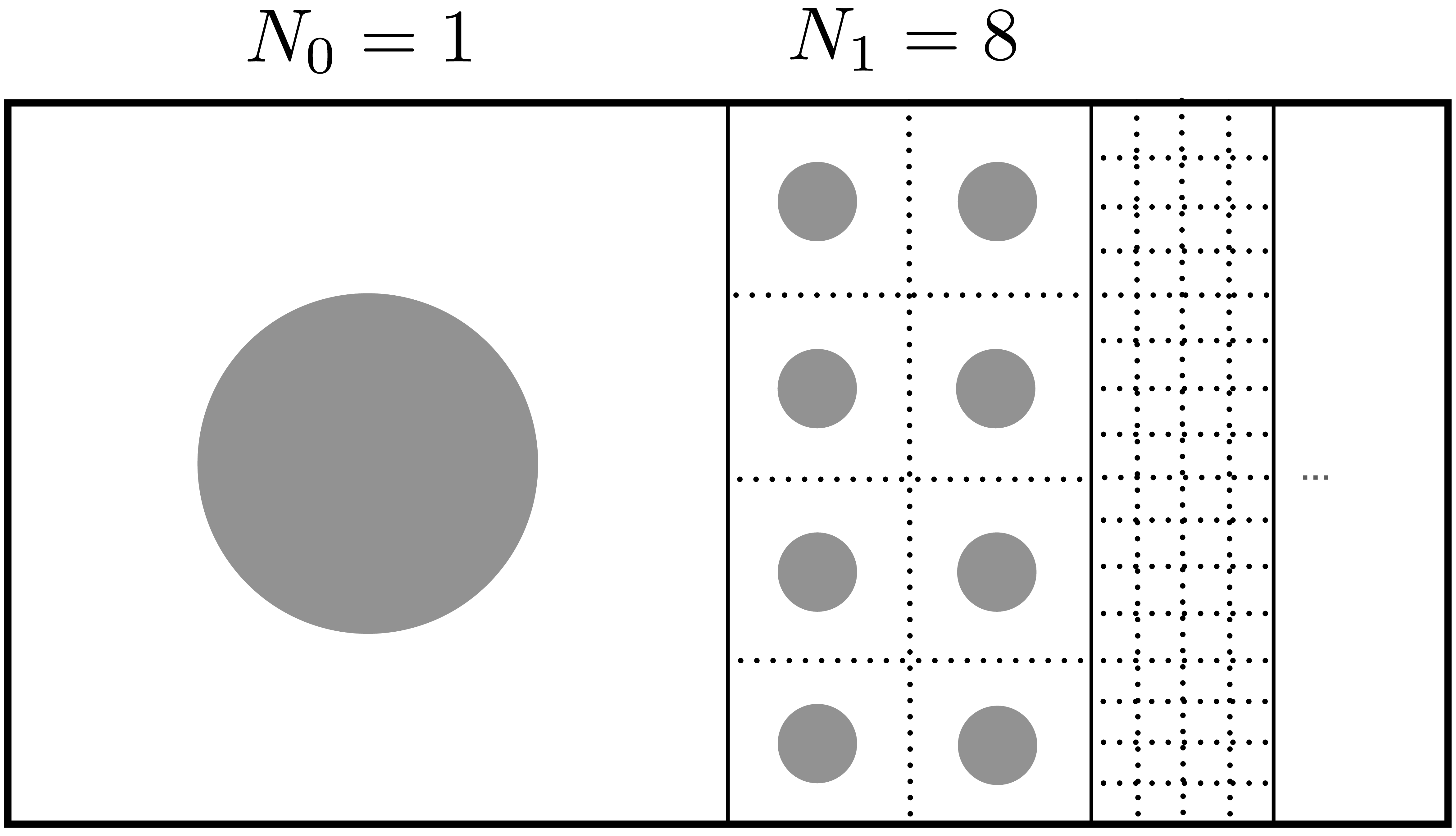}
  \caption{This figure depicts an initial level set $\Omega$ that causes fattening instantly. }\label{fig:2}
\end{figure}
Now, fix $\overline{h}\in\R$, and take a function $f:\R\times[0,\infty)\to\R$ so that 
\[
f(\overline{h}, \mu(\Omega))=0, \ 
f(h,\mu(\Omega))>0 \ \text{if} \ h>\overline{h}, \ 
\text{and} \ 
f(h,\mu(\Omega))<0 \ \text{if} \ h<\overline{h}. 
\]
Define $u_0:\R^2\to\R$ by 
\[ 
u_0(x) := 
\begin{cases}
\overline{h}+{\rm dist}(x,\Omega) & \text{if} \; \; x \in\R^2\setminus\Omega, \\
\overline{h}-{\rm dist}(x, \R^2\setminus\Omega) & \text{if} \; \; x \in\Omega. 
\end{cases} 
\]
Then, it is easily seen that $u_0$ satisfies \eqref{eq:nonfat-any}, 
$\Per(E_{\overline{h}}(0))=\frac{15\pi}{14}<\infty$, 
and 
$\overline{\Omega}=E_{\overline{h}}(0)=\{u_0\le\overline{h}\}$. 
By the argument in \cite[Example 1]{Kr}, we have, for all $s>0$ small, 
\[
\Per(E_{\overline{h}}(s))\ge 
\frac{\pi}{14}\left(\frac{1}{2\sqrt{s}}-1\right)+\frac{\pi}{14}=\frac{\pi}{28\sqrt{s}}.
\]
\end{ex}

\section{Large-time asymptotics}\label{sec:large}
In this section we investigate the large time behavior of solutions of \eqref{E1}, \eqref{initial} 
under assumptions (A1)--(A4). We also discuss the rate of convergence. 

\subsection{Convergence and finite time stabilization}

First, we prove the locally uniform convergence \eqref{eq:conv}.  
\begin{prop}\label{prop:conv}
Assume that (A1)--(A4) hold and there exists $\ol{h}\in \R$ satisfying \eqref{h-threshold}. Let $u$ be the unique solution of \eqref{E1} and \eqref{initial}. 
Then for any compact set $K\subset \R^n$, $u(\cdot, t)\to\overline{h}$ uniformly in $K$ as $t\to\infty$. 
\end{prop}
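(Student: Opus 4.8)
The plan is to read the convergence off the explicit description of the sub- and superlevel sets of $u$ provided by Proposition~\ref{prop:parallel-surfaces} (recall $u=u_D=u_E$ by Theorem~\ref{thm:control}), combined with the sign information on $f$ encoded in (A2) and the threshold relation \eqref{h-threshold}. Concretely, I would show that for every level $h>\ol{h}$ the sublevel set $\{u(\cdot,t)<h\}$ exhausts $\R^n$ as $t\to\infty$, and that for every $h<\ol{h}$ the sublevel set $\{u(\cdot,t)\le h\}$ shrinks to the empty set; on a compact $K$ this gives $u(\cdot,t)<h$ for $h$ slightly above $\ol{h}$ and $u(\cdot,t)>h$ for $h$ slightly below $\ol{h}$, once $t$ is large, which is exactly \eqref{eq:conv}. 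Throughout I would work in the generic situation $\inf_{\R^n}u_0<\ol{h}<\sup_{\R^n}u_0$; in the complementary cases one of the level sets of $u_0$ is empty or all of $\R^n$, and the corresponding one-sided bound on $u$ is then immediate from \eqref{bounds}.

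First I would treat $h>\ol{h}$. By \eqref{coincide1}, $\{u(\cdot,t)<h\}=D_h(\delta_D(t,h))$, and by Lemma~\ref{lem:reform-ODE} the map $t\mapsto\delta_D(t,h)$ is nonnegative, increasing, of class $C^1$, and solves $(\delta_D)_t(t,h)=(f_h\circ\mu)(D_h(\delta_D(t,h)))$. Since $\delta_D(\cdot,h)\ge0$ and $D_h(\cdot),\mu,f(h,\cdot)$ are monotone, this forces $(\delta_D)_t(t,h)\ge(f_h\circ\mu)(D_h(0))$ for all $t\ge0$. The key point is that $(f_h\circ\mu)(D_h(0))>0$: because $h>\ol{h}$ one has $E_{\ol{h}}(0)=\{u_0\le\ol{h}\}\subset\{u_0<h\}=D_h(0)$, and the open set $D_h(0)\setminus E_{\ol{h}}(0)=\{\ol{h}<u_0<h\}$ is nonempty (continuity of $u_0$ and $\inf u_0<\ol{h}<\sup u_0$), hence of positive $\mu$-measure by (A3); so $\mu(D_h(0))>\mu(E_{\ol{h}}(0))$, and combining the strict monotonicity of $f(h,\cdot)$ with the monotonicity in the first variable and \eqref{h-threshold} gives $(f_h\circ\mu)(D_h(0))>(f_h\circ\mu)(E_{\ol{h}}(0))\ge(f_{\ol{h}}\circ\mu)(E_{\ol{h}}(0))\ge0$. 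Thus $\delta_D(t,h)\ge c_h t$ for some $c_h>0$, so $\delta_D(t,h)\to+\infty$. Since $D_h(0)$ is nonempty and bounded by (A4), $x\mapsto\dist(x,D_h(0))$ is continuous, hence bounded on the compact set $K$; once $\delta_D(t,h)$ exceeds this bound we get $K\subset D_h(\delta_D(t,h))=\{u(\cdot,t)<h\}$, i.e.\ $u(\cdot,t)<h$ on $K$.

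The case $h<\ol{h}$ is symmetric. By \eqref{coincide1}, $\{u(\cdot,t)\le h\}=E_h(\delta_E(t,h))$, and by Lemma~\ref{lem:reform-ODE} the map $\delta_E(\cdot,h)$ is nonpositive, decreasing, and solves the corresponding ODE, so $(\delta_E)_t(t,h)\le(f_h\circ\mu)(E_h(0))$; using $E_h(0)=\{u_0\le h\}\subset\{u_0<\ol{h}\}=D_{\ol{h}}(0)$ with the nonempty open set $\{h<u_0<\ol{h}\}$ of positive measure in between, the same chain of inequalities yields $(f_h\circ\mu)(E_h(0))<(f_{\ol{h}}\circ\mu)(D_{\ol{h}}(0))\le0$, hence $\delta_E(t,h)\to-\infty$. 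Since for $s<0$ one has $E_h(s)=\{x\in E_h(0):\dist(x,\R^n\setminus E_h(0))\ge-s\}$, which is empty once $-s$ exceeds the inradius of the bounded set $E_h(0)$, it follows that $K\cap E_h(\delta_E(t,h))=\emptyset$ for large $t$, i.e.\ $u(\cdot,t)>h$ on $K$. Combining the two conclusions, for any $\vep>0$ we get $\ol{h}-\vep<u(\cdot,t)<\ol{h}+\vep$ on $K$ for all sufficiently large $t$.

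The hard part will be the second step in each case, namely extracting the strictly signed bound on $(\delta_W)_t$ that drives $\delta_D(t,h)\to+\infty$ and $\delta_E(t,h)\to-\infty$: this is where the \emph{strict} monotonicity of $f$ in its second variable and the characterization \eqref{h-threshold} of $\ol{h}$ are essential, and some care is needed to verify, via continuity of $u_0$ and positivity of $\Theta$, that $D_h(0)$ and $E_{\ol{h}}(0)$ (resp.\ $E_h(0)$ and $D_{\ol{h}}(0)$) have strictly different $\mu$-measure. As a convenient byproduct, one may also first note that $t\mapsto u(x,t)$ is monotone --- nonincreasing where $u_0(x)>\ol{h}$ and nondecreasing where $u_0(x)<\ol{h}$, by applying the sub- and super-optimality inequalities of Lemma~\ref{lem:optimality} with $y=x$ --- so that $u(x,t)$ converges pointwise thanks to \eqref{bounds}; the level-set argument above then both identifies the limit as $\ol{h}$ and upgrades the convergence to be uniform on compact sets.
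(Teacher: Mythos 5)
Your proposal is correct and runs on the same engine as the paper's proof: both arguments reduce the statement to showing $\delta_D(t,h)\to+\infty$ for every $h>\ol{h}$ and $\delta_E(t,h)\to-\infty$ for every $h<\ol{h}$, by bounding $(\delta_W)_t$ away from zero through the ODE \eqref{dist ODE}, the monotonicity in (A2) and the threshold relation \eqref{h-threshold}, and then transferring this to $u$ via Proposition~\ref{prop:parallel-surfaces}. The difference is only in how uniformity on $K$ is extracted: the paper first proves pointwise convergence separately on $\{u_0\ge\ol{h}\}$ and $\{u_0\le\ol{h}\}$, using \eqref{bounds} and the time-monotonicity of $u_D,u_E$ coming from Lemma~\ref{lem:optimality}, and then invokes Dini's theorem; you instead read uniformity directly from the inclusions $K\subset D_h(\delta_D(t,h))$ and $K\cap E_h(\delta_E(t,h))=\emptyset$ for large $t$, which avoids Dini altogether and even gives $u(\cdot,t)>h$ on all of $\R^n$ for $h<\ol{h}$. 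A further plus is that you justify the strict sign $(f_h\circ\mu)(D_h(0))>0$ for $h>\ol{h}$ (through the nonempty open set $\{\ol{h}<u_0<h\}$ and (A3)), which the paper simply asserts. One caveat: your dismissal of the non-generic cases as ``immediate from \eqref{bounds}'' only settles the trivial side; e.g.\ when $u_0>\ol{h}$ everywhere you still need the expansion of $\{u(\cdot,t)<h\}$ for $h$ just above $\ol{h}$, and if $\ol{h}$ lies strictly below $\inf u_0$ the strict sign can degenerate because $f$ is only nonstrictly monotone in its first argument. This, however, is exactly the point at which the paper's own proof makes the same unproved assertion (and where the uniqueness of $\ol{h}$ claimed in the introduction is implicitly used), so it does not put your argument behind the paper's; when $\ol{h}$ lies in the closure of the range of $u_0$ your argument goes through verbatim.
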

\begin{proof}
Consider first $x \in \mathbb{R}^n$ such that $u_0(x)\geq \ol{h}$. By \eqref{bounds} we have 
\begin{equation}\label{eq conv1}
u_D(x, t)\geq \ol{h} \quad \text{for all $t\geq 0$.}
\end{equation}  
Moreover,  in view of \eqref{new suboptimality}, for any $0\leq t_1\leq t_2$, we deduce that  
$u_D(x, t_1)\geq u_D(x, t_2)$. In particular, $u_D(x, t)=\ol{h}$ for all $t\geq 0$ provided that $u_0(x)=\ol{h}$.
If $u_0(x) > \overline{h}$, then since
\[ (\delta_D)_t(t, h) \ge f(h, \mu(D_h(0))) > 0 \quad \text{for} \; \; t \ge 0, \; h > \overline{h}, \]
we see that $\delta_D(t, h)\to \infty$ as $t \to \infty$ for any $h > \overline{h}$, which implies 
that there exists $t_0>0$ such that $x \in D_h(\delta_D(t,h))$ for any $t>t_0$ and 
$\limsup_{t \to \infty} u_D(x,t) \le h$ for $h > \overline{h}$. 
By \eqref{eq conv1}, we thus immediately obtain pointwise convergence of $u(\cdot, t)=u_D(\cdot, t)$ to $\ol{h}$ in $\{u_0\geq \ol{h}\}$ as $t\to \infty$. In addition, the monotonicity of $u_D$ in time enables us to use Dini's theorem to show that for every compact set $K\subset \R^n$, $u(\cdot, t)=u_D(\cdot, t)$ converges uniformly to $\ol{h}$ in $K\cap \{u_0\geq \ol{h}\}$ as $t\to \infty$.
We can apply the same argument to prove that $u(\cdot, t)=u_E(\cdot, t)$ converges uniformly to $\ol{h}$ in $K\cap \{u_0\leq \ol{h}\}$ for each compact set $K\subset \R^n$. Combining both cases, we obtain the uniform convergence of $u(\cdot, t)\to \ol{h}$ in $K$ as $t\to \infty$.
\end{proof}

We are interested in more precise behavior of the solution in long time. As mentioned in the introduction, there may exist a finite moment $T>0$ after which $u$ becomes identically equal to $\ol{h}$. It turns out to be closely related to the occurrence of fattening of $\ol{h}$-level sets. Here follows a result about when this finite-time stabilization takes place. 

\begin{prop}\label{cor:finite-infinite}  Assume that (A1)--(A4) hold and there exists $\ol{h}\in \R$ satisfying \eqref{h-threshold}. Let $u$ be the unique solution  of \eqref{E1} and \eqref{initial}. 
\begin{enumerate}[{\rm(a)}]
\item For $x \in \mathbb{R}^n$, there exists $T>0$ such that 
\begin{equation}\label{finite-convergence} 
u(x,t) = \overline{h} \quad \text{for all $t \ge T$}  
\end{equation}
if either (i) $u_0(x)<\ol{h}$ and \eqref{fattening suff1} hold or (ii) $u_0(x)> \ol{h}$ and \eqref{fattening suff2} hold.
\item  Assume in addition that $f(\overline{h}, \cdot)$ is of class ${\rm Lip}_{\rm loc}([0, \infty))$.  Then, 
$u(x,t) < \overline{h}$ holds for any $t > 0$ and any $x \in \mathbb{R}$ satisfying $u_0(x) < \overline{h}$ provided that \eqref{type2d} hlds. 
Similarly, $u(x,t) > \overline{h}$ holds for any $t > 0$ and any $x \in \mathbb{R}$ satisfying $u_0(x) > \overline{h}$ 
provided that \eqref{type2e} holds. 
\end{enumerate}
\end{prop}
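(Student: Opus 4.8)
The plan is to reduce both statements, via the representation formula $u=u_D=u_E$ of Theorem~\ref{thm:control} together with the two-sided bound \eqref{bounds}, to the behaviour of the scalar functions $t\mapsto\delta_D(t,\ol h)$ and $t\mapsto\delta_E(t,\ol h)$ and of $U_D,U_E$ at levels $h$ near $\ol h$. Throughout I would use that $\delta_D(\cdot,\ol h),\delta_E(\cdot,\ol h)$ solve the ODE \eqref{dist ODE} on $(0,\infty)$ (Corollary~\ref{cor:ODE-oh}), that $\delta_D(\cdot,\ol h)$ is the nonincreasing minimal solution and $\delta_E(\cdot,\ol h)$ the nondecreasing maximal solution (Proposition~\ref{prop:sol-threshold}), and the monotonicity and one-sided continuity in $h$ from Lemma~\ref{lem:ode}.

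For part (a), case (i) ($u_0(x)<\ol h$ and \eqref{fattening suff1}): the fattening criterion associated with \eqref{fattening suff1} already gives $\delta_D(t,\ol h)<0$ for all $t>0$, with $\delta_D(\cdot,\ol h)$ strictly decreasing for $t\ge T$ (some $T>0$). I would upgrade this to $\delta_D(t,\ol h)\to-\infty$: for $t\ge T$ one has $D_{\ol h}(\delta_D(t,\ol h))\subset A:=D_{\ol h}(\delta_D(T,\ol h))$, and $\mu(A)<\mu(D_{\ol h}(0))$ because $\Theta>0$ and $D_{\ol h}(0)=\{u_0<\ol h\}\ni x$ is a nonempty open set, so its boundary layer carries positive measure; hence by \eqref{dist ODE} and the strict monotonicity \eqref{monotone eq}, $(\delta_D)_t(t,\ol h)\le f(\ol h,\mu(A))<f(\ol h,\mu(D_{\ol h}(0)))\le0$ for $t\ge T$, which forces linear divergence. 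Next pick $y_*\in\R^n$ with $u_0(y_*)\ge\ol h$ (possible since $\{u_0<\ol h\}$ is bounded by \eqref{as-initial}); using $\delta_D(t,h)\le\delta_D(t,\ol h)$ for $h<\ol h$ (Lemma~\ref{lem:ode}(a)), once $-\delta_D(t,\ol h)\ge|y_*-x|$, i.e. for all $t$ past some $T$, the definition \eqref{def:value} gives $U_D(x,t,h)\ge u_0(y_*)-h>0$ for every $h<\ol h$, so $u_D(x,t)\ge\ol h$ by \eqref{new upper value}; combined with $u_D(x,t)\le\max\{u_0(x),\ol h\}=\ol h$ from \eqref{bounds}, this yields $u(x,t)=\ol h$ for all $t\ge T$. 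Case (ii) is symmetric: \eqref{fattening suff2} forces $\delta_E(t,\ol h)\to+\infty$ (and also rules out $E_{\ol h}(0)=\emptyset$, since otherwise \eqref{h-threshold} would force $f(\ol h,0)=0$, contradicting \eqref{fattening suff2}); choosing $y_*\in E_{\ol h}(0)$ and using the right continuity and monotonicity of $h\mapsto\delta_E(t,h)$ from Lemma~\ref{lem:ode} gives $U_E(x,t,h)\le u_0(y_*)-h<0$ for all $h>\ol h$ once $\delta_E(t,\ol h)\ge|y_*-x|$, hence $u(x,t)=u_E(x,t)=\ol h$ for $t$ large.

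For part (b) I would show $\delta_D(\cdot,\ol h)\equiv0$ under \eqref{type2d}. Since $\delta_D(\cdot,\ol h)$ is the nonincreasing minimal solution it satisfies $\delta_D(\cdot,\ol h)\le0$, and it solves $(\delta_D)_t=f(\ol h,\mu(D_{\ol h}(\delta_D)))$ on $(0,\infty)$; by $(f_{\ol h}\circ\mu)(D_{\ol h}(0))=0$ the constant $0$ is also a solution. The perimeter bound in \eqref{type2d}, the co-area formula (as in the proof of Theorem~\ref{prop:fattening-o-h-1}) and local boundedness of $\Theta$ near $\partial D_{\ol h}(0)$ give that $s\mapsto\mu(D_{\ol h}(s))$ is Lipschitz on $[-s_0,0]$, and then $f(\ol h,\cdot)\in{\rm Lip}_{\rm loc}$ makes the right-hand side of the ODE Lipschitz there; a Gr\"onwall estimate applied to $\sigma(t):=-\delta_D(t,\ol h)\ge0$ starting from the last time $\delta_D(\cdot,\ol h)$ vanishes then forces $\delta_D(\cdot,\ol h)\equiv0$. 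Consequently $U_D(x,t,\ol h)=u_0(x)-\ol h<0$ by \eqref{def:value} (the ball degenerates to $\{x\}$), so $u(x,t)=u_D(x,t)<\ol h$ for all $t>0$; the statement for $u_0(x)>\ol h$ under \eqref{type2e} follows by the symmetric argument with $\delta_E$, $U_E$, using that $\delta_E(\cdot,\ol h)$ is the nonnegative maximal solution.

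The main obstacle is the Lipschitz estimate for $s\mapsto\mu(D_{\ol h}(s))$ (resp. $s\mapsto\mu(E_{\ol h}(s))$) near $s=0$: only a perimeter bound is available, so one must pass through the co-area formula and exploit the regularity assumptions on $f(\ol h,\cdot)$ and $\Theta$, exactly as in Theorem~\ref{prop:fattening-o-h-1}; without it the ODE uniqueness, hence $\delta_W(\cdot,\ol h)\equiv0$, could fail. Two minor but necessary points are to track the degenerate cases in which $D_{\ol h}(0)$ or $E_{\ol h}(0)$ is empty (either excluded by the hypotheses or handled trivially), and, in part (a), to note that the fattening criterion only gives negativity of $\delta_D(\cdot,\ol h)$ on a short interval, so the divergence $\delta_D(t,\ol h)\to-\infty$ genuinely requires the monotonicity-of-velocity argument above.
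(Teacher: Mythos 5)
Your proposal is correct and follows essentially the same route as the paper: in (a) a strictly signed velocity bound for $\delta_E(\cdot,\ol h)$ (resp. $\delta_D(\cdot,\ol h)$) after a short time yields linear growth, so $x$ is absorbed into the relevant parallel set in finite time and \eqref{bounds} gives $u(x,t)=\ol h$; in (b) the perimeter bound, the co-area formula and the ${\rm Lip}_{\rm loc}$ regularity of $f(\ol h,\cdot)$ give uniqueness of the signed solution of \eqref{dist ODE}, hence $\delta_W(\cdot,\ol h)\equiv 0$, exactly as the paper does by invoking the argument of Theorem \ref{prop:fattening-o-h-1}. The one caveat you flag—needing (local) boundedness of $\Theta$, which (A3) alone does not provide—is shared by the paper itself, since its proof of (b) borrows the proof of Theorem \ref{prop:fattening-o-h-1}, where boundedness of $\Theta$ is an explicit hypothesis.
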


\begin{proof}
For (a), we only prove \eqref{finite-convergence} under the condition (ii). The proofs under (i) is analogous. 
When (ii) holds, by continuity we have $f(\overline{h}, \mu(E_{\ol{\vep}, \vep}))> 0$ for $\vep>0$ small, where we denote $E_{\ol{h}, \vep}:=E_{\overline{h}}(\delta_E(\varepsilon, \overline{h}))$. 
By (A2) and the monotonicity of $t \mapsto \delta_E(t,\overline{h})$, we thus have 
\[
(\delta_E)_t(t, \overline{h}) \ge f({\overline{h}},  \mu (E_{\ol{h}, \vep})) > 0
\] 
for all $t \ge \varepsilon$. Therefore, letting 
\[ T := \frac{{\rm dist}(x, E_{\overline{h}}(0))}{ f({\overline{h}},  \mu (E_{\ol{h}, \vep}))} + \varepsilon, \]
we can easily see that $x \in E_{\overline{h}}(\delta_E(t, \overline{h}))$ for all $t \ge T$, which is equivalent to  \eqref{finite-convergence}. 

For (b), let us also give a proof only in the case $u_0(x)>\overline{h}$ and omit the other case.  
Following the proof of Theorem \ref{prop:fattening-o-h-1}, we have $\delta_E(t,\overline{h}) = 0$ for any $t \ge 0$.  Then for any $x\in \R^n$ satisfying $u_0(x)>\overline{h}$, we get $x \not\in E_{\overline{h}}(\delta_E(t,\overline{h}))$ and thus $u(x,t)>\overline{h}$ for all $t\geq 0$. 
\end{proof}

\subsection{Lower bound for the convergence rate}
As a further step than the result in Proposition \ref{cor:finite-infinite}(b) for Type II behavior, under stronger assumptions we give a more precise convergence rate of \eqref{eq:conv}, as stated in Theorem \ref{thm exponent}. 
Since $u_0$ is assumed to be uniformly continuous, we can find a strictly increasing $g_+\in C([0, \infty))$ and a strictly decreasing function $g_-\in C([0, \infty))$ satisfying $g_\pm(0)=0$, and 
\begin{equation}\label{ini-barrier1}
g_+(\dist(\cdot, E_{\ol{h}}(0)))+\ol{h}\geq u_0 \quad \text{in $\R^n$,} 
\end{equation}
\begin{equation}\label{ini-barrier2}
g_-(\dist(\cdot, D_{\ol{h}}(0)^c))+\ol{h}\leq u_0  \quad \text{in $\R^n$.} 
\end{equation}

We use $g_-$ to construct a barrier function from below and obtain the following result on the convergence rate. 

\begin{prop}\label{prop exponent-lower}
Assume that (A1)--(A4) hold and there exists $\ol{h}\in \R$ satisfying \eqref{h-threshold}. Let $u$ be the unique solution to \eqref{E1} and \eqref{initial}. 
Assume that 
\eqref{reg initial lower} holds. 
Let $g_-\in C([0, \infty))$ be a strictly decreasing function with $g_-(0)=0$ such that \eqref{ini-barrier2} holds. 
Then, there exist $C>0$ and $\lambda>0$ such that
 \begin{equation}\label{exp est2}
u(x, t)-\ol{h} \geq g_-(Ce^{-\lambda t}) \quad \text{for all $x\in \R^n$ and $t>0$ large.}
\end{equation}
 \end{prop}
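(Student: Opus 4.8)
The plan is to run the control-theoretic representation $u=u_D$ of Theorem~\ref{thm:control} and estimate the parallel-set parameter $\delta_D(\cdot,h)$ at \emph{sub-critical} levels $h<\ol h$. Since $u_0$ is coercive, $D_{\ol h}(0)=\{u_0<\ol h\}$ is bounded; for $x$ with $u_0(x)\ge\ol h$ the inequality \eqref{exp est2} is trivial, because $u(x,t)=u_D(x,t)\ge\ol h$ by \eqref{bounds} while $g_-(Ce^{-\lambda t})<0$. Hence it suffices to treat $x\in D_{\ol h}(0)$, for which $r_x:=\dist(x,D_{\ol h}(0)^c)$ ranges in a bounded interval $(0,R_0]$ with $R_0<\infty$. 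From \eqref{reg initial lower} fix $c_0,s_0>0$ with $f(\ol h,\mu(D_{\ol h}(-s)))\le-c_0 s$ for $0<s\le s_0$ (shrinking $s_0$ so that $D_{\ol h}(-s_0)\ne\emptyset$). For $h<\ol h$ and $t>0$, the formula \eqref{def:value} gives that $U_D(x,t,h)\ge 0$ iff $\dist(x,D_h(0)^c)\le-\delta_D(t,h)$, and since $D_h(0)^c\supset D_{\ol h}(0)^c$ this holds whenever $-\delta_D(t,h)\ge r_x$; in that case $u_D(x,t)\ge h$ by \eqref{new upper value}.

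Next I would study $s_h(t):=-\delta_D(t,h)$ for $h<\ol h$. By Lemma~\ref{lem:reform-ODE}, $s_h(0)=0$, $s_h$ is strictly increasing, and $s_h'(t)=-f\big(h,\mu(D_h(-s_h(t)))\big)$. Set $\rho_h:=g_-^{-1}(h-\ol h)>0$. Using \eqref{ini-barrier2} one checks that $u_0(z)<h$ forces $g_-(\dist(z,D_{\ol h}(0)^c))<h-\ol h$, hence $\dist(z,D_{\ol h}(0)^c)>\rho_h$; a one-Lipschitz comparison of the distance functions to $D_{\ol h}(0)^c$ and to $D_h(0)^c$ upgrades this to the inclusion $D_h(-s)\subset D_{\ol h}\big(-(s+\rho_h)\big)$ for every $s\ge 0$. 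Combining this with the monotonicity of $\mu$ and with (A2) yields $s_h'(t)\ge-f\big(\ol h,\mu(D_{\ol h}(-(s_h(t)+\rho_h)))\big)$, so that $s_h'(t)\ge c_0(s_h(t)+\rho_h)$ as long as $s_h(t)+\rho_h\le s_0$, and $s_h'(t)\ge c_0 s_0/2$ once $s_h(t)\ge s_0/2$. Integrating the first inequality gives $s_h(t)\ge\rho_h(e^{c_0 t}-1)$ while $s_h(t)\le s_0/2$, and adding the second phase shows that, provided $\rho_h\le s_0/2$, $s_h$ exceeds any target $r_x\in(0,R_0]$ at a time no larger than $c_0^{-1}\log(1/\rho_h)+C_1$, where $C_1$ depends only on $c_0,s_0,R_0$ — crucially, not on $h$ nor on $x$.

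To conclude, set $\lambda:=c_0/2$, fix any $C>0$, and choose the level $h_t:=\ol h+g_-(Ce^{-\lambda t})<\ol h$, so that $\rho_{h_t}=g_-^{-1}(h_t-\ol h)=Ce^{-\lambda t}$, which is $\le s_0/2$ for $t$ large. The estimate of the previous step then gives $-\delta_D(t,h_t)=s_{h_t}(t)\ge r_x$ as soon as $t\ge c_0^{-1}\log(1/\rho_{h_t})+C_1=\tfrac12 t+\big(C_1-c_0^{-1}\log C\big)$, which is satisfied for all $t$ beyond a threshold $T_0$ independent of $x$. For such $t$ we obtain $u(x,t)=u_D(x,t)\ge h_t=\ol h+g_-(Ce^{-\lambda t})$, that is, \eqref{exp est2}.

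The main obstacle is the degeneracy at the critical level: $\delta_D(\cdot,\ol h)$ may be identically zero, so the solution cannot be barriered against the evolution of the $\ol h$-level set directly and one is forced to work at levels $h<\ol h$; the substantive point is then the \emph{uniformity} — over all such $h$ near $\ol h$ and all $x\in D_{\ol h}(0)$ — of the time needed for the parallel sets of $D_h(0)$ to engulf $x$. The geometric inclusion $D_h(-s)\subset D_{\ol h}(-(s+\rho_h))$, which links the sub-level sets of $u_0$ at level $h$ to the parallel sets at level $\ol h$ through the modulus $g_-$, is exactly the mechanism converting the non-degeneracy hypothesis \eqref{reg initial lower} into an exponential rate, and keeping $C_1$ free of $h$ is where the estimates must be done with care. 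Alternatively, after replacing $g_-$ by a smaller modulus of power type, one can check that $\ol h+g_-(\dist(\cdot,D_{\ol h}(0)^c))\,e^{-\lambda t}$ is a viscosity subsolution of \eqref{E1} for $\lambda$ small and deduce \eqref{exp est2} from the comparison principle, Theorem~\ref{thm:comparison1}.
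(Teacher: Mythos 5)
Your proposal is correct and is essentially the paper's argument: both exploit the representation via $\delta_D(\cdot,h)$ for $h<\ol h$, the inclusion of $D_h$-parallel sets into $D_{\ol h}$-parallel sets shifted by $\rho_h=g_-^{-1}(h-\ol h)$ (which is how \eqref{ini-barrier2} enters), and a Gronwall-type exponential estimate coming from \eqref{reg initial lower}. The only cosmetic differences are in packaging: the paper chooses $\lambda$ so that $f(\ol h,\mu(D_{\ol h}(s)))\le \lambda s$ on all of $[-\ul d,0]$ and phrases the conclusion as the barrier inequality $w_-\le u$ via sublevel-set comparison for every $h\le\ol h$, whereas you keep the linear bound only near $s=0$, add a second constant-speed phase, and select a single level $h_t$ for each large $t$ — the same computation in substance.
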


\begin{proof}
Since $D_{\ol{h}}(0)$ is bounded, we have 
\[
\ul{d}:=\sup_{\R^n} \dist(x, D_{\ol{h}}(0)^c )<\infty.
\] 
By (A2), \eqref{h-threshold} and \eqref{reg initial lower}, we can choose
$\lambda>0$ such that 
\begin{equation}\label{eq exp2 new}
f(\ol{h}, \mu(D_{\ol{h}}(s)))\leq \lambda s \quad\text{for all $-\ul{d}\leq s\leq 0$.}
\end{equation}
For $(x, t)\in \R^n\times [0, \infty)$, take 
\begin{equation}\label{barrier2}
w_-(x, t)=g_-(\dist(x, D_{\ol{h}}(0)^c)e^{-\lambda t})+\ol{h}.
\end{equation}
Fix $h\leq  \ol{h}$ arbitrarily. Since \eqref{ini-barrier2} holds, we have 
\[
D^w_h(0):=\{w_-(\cdot, 0)< h\}\supset \{u_0< h\}.
\]
This relation amounts to saying that $D_h(0)\subset D_{\ol{h}}(-g_-^{-1}(h-\ol{h}))$, since by the definition of $D_h$ and the strict monotonicity of $g_-$, we have $D_h(0)=\{u_0<h\}$ and $\{w_-(\cdot, 0)< h\}=D_{\ol{h}}(-g_-^{-1}(h-\ol{h}))$.

Let us consider the parallel set of $h$-sublevel set of $w_-(\cdot, 0)$: for $s\leq 0$, 
\[
D^w_h(s):=\{x\in \R^n: \dist(x, \{w_-(\cdot, 0)\geq h\})> -s\}= D_{\ol{h}}(-g_-^{-1}(h-\ol{h})+s). 
\]
The second equality above again follows from the definition of $D_h$. Indeed, noticing that $\{w_-(\cdot, 0)\geq h\}=D_{\ol{h}}(-g_-^{-1}(h-\ol{h}))^c$, we see that $\dist(x, \{w_-(\cdot, 0)\geq h\})>-s$ if and only if $\dist(x, D_{\ol{h}})>-g_-^{-1}(h-\ol{h})+s$, which immediately yields the desired equality. 

The ODE \eqref{dist ODE} with $W_h= D^w_h$ writes 
\[
\delta_t(t, h)= (f_h\circ \mu)(D^w_h(\delta(t, h))) \quad \text{for $t>0$ with $\delta(0, h)=0$.}
\]
Taking $\tilde{\delta}(t, h)= \delta(t, h)-g_-^{-1}(h-\ol{h})$,   we can rewrite the equation as 
\[
\tilde{\delta}_t(t, h)=(f_h\circ \mu)(D_{\ol{h}}(\tilde{\delta}(t,h))), \quad \tilde{\delta}(0, h)= -g_-^{-1}(h-\ol{h}).
\]
By (A2) and \eqref{eq exp2 new}, we have 
\[
\tilde{\delta}_t(t, h)\leq (f_{\ol{h}}\circ \mu) (D_{\ol{h}}(\tilde{\delta} (t,h)))\leq \lambda\tilde{\delta}(t,h).
\]
It follows that $\tilde{\delta}(t, h)\leq  -g_-^{-1}(h-\ol{h}) e^{\lambda t}$, which yields
\[
\begin{aligned}
\{\dist(\cdot, D_{\ol{h}}(0)^c)> g_-^{-1}(h-\ol{h}) e^{\lambda t}\}&\supset\{\dist(\cdot, D_{\ol{h}}(0)^c)> -\delta(t, h)+g_-^{-1}(h-\ol{h})\}
\\
&=D_{\ol{h}}(\delta(t, h)-g_-^{-1}(h-\ol{h})). 
\end{aligned}
\]
Noticing that, for $w_-$ defined by \eqref{barrier2} and $h\leq \ol{h}$,  
\[
\{w_-(\cdot, t)< h\}=\{\dist(\cdot, D_{\ol{h}}(0)^c)> g_-^{-1}(h-\ol{h}) e^{\lambda t}\},
\]
we therefore get $D^w_h(\delta(t, h))\subset \{w_-(\cdot, t)< h\}$.

On the other hand, comparing the evolutions for $D_h^w$ and $D_h$, we have ${\delta}(t, h) \geq \delta_D(t, h)$ and $D_h(\delta_D(t, h))\subset D_h^w(\delta(t, h))$ for all $t\geq 0$ and $h\leq \ol{h}$. Hence, we are led to 
\[
\{u(\cdot, t)< h\}\subset \{w_-(\cdot, t)< h\}.
\]
Since $h\leq \ol{h}$ is arbitrary, we immediately get $w_-(x, t) \leq u(x, t)$ for all $x\in \R^n$ and $t\geq 0$. Since $D_{\ol{h}}(0)$ is assumed to be bounded, we thus can find $C>0$ such that \eqref{exp est2} holds. 
\end{proof}

We remark that it is possible to adopt an alternative PDE method to prove \eqref{exp est2} under the assumption \eqref{eq exp2 new}. 
In fact, one can show that $w_-$ given by \eqref{barrier2} is a subsolution of \eqref{E1} and then use the comparison principle to get the desired estimate. Let us below give a sketch of proof, assuming that $g_-\in C^1((0, \infty))$.  

Suppose that there exist $\phi\in C^1(\R^n\times (0, \infty))$ and $(x_0, t_0)\in \R^n\times (0, \infty)$ such that $w_--\phi$ attains a local maximum at $(x_0, t_0)$. We may assume that $w_-(x_0, t_0)-\phi(x_0, t_0)=0$. Since $g_-$ is strictly decreasing, it follows that 
\[
(x, t)\mapsto\dist(x, D_{\ol{h}}(0)^c)-g_-^{-1}(\phi(x, t)-\ol{h})e^{\lambda t}
\]
also attains a local minimum at $(x_0, t_0)$. Let us discuss two different cases about the location of $x_0$.

Suppose that $x_0\in D_{\ol{h}}(0)$. Then, we see that
\[
t\mapsto \dist(x_0, D_{\ol{h}}(0)^c)-g_-^{-1}(\phi(x_0, t)-\ol{h})e^{\lambda t} 
\]
attains a local minimum at $t=t_0$, which immediately yields 
\begin{equation}\label{eq exp3}
\phi_t(x_0, t_0)=-\lambda d_0 e^{-\lambda t_0} g_-'(d_0 e^{-\lambda t_0}),
\end{equation} 
where we set $d_0:=\dist(x_0, D_{\ol{h}}(0)^c)$.
Also, since
\[
x\mapsto\dist(x, D_{\ol{h}}(0)^c)-g_-^{-1}(\phi(x, t_0)-\ol{h})e^{\lambda t_0} 
\]
achieves a local minimum at $x=x_0$, we have 
\begin{equation}\label{eq exp5}
|\nabla \phi(x_0, t_0)| \geq |g_-'(d_0 e^{-\lambda t_0}) e^{-\lambda t_0}|=-g_-'(d_0 e^{-\lambda t_0}) e^{-\lambda t_0}.
\end{equation}
 Here we applied the fact that, for any closed set $A\subset \R^n$, $U=\dist(\cdot, A)$ is known to be a viscosity solution of the eikonal equation $|\nabla U|=1$ in $A^c$.
Moreover, noticing that
\[
\begin{aligned}
&f(w_-(x_0, t_0), \mu(\{w_-(\cdot , t_0)< w_-(x_0, t_0)\}))\\
&=f(w_-(x_0, t_0), \mu(\{\dist(\cdot, D_{\ol{h}}(0)^c)> \dist(x_0, D_{\ol{h}}(0)^c)\}))= f(w_-(x_0, t_0), \mu(D_{\ol{h}}(-d_0))),
\end{aligned}
\]
we adopt (A2) and \eqref{eq exp2 new} to obtain
\begin{equation}\label{eq exp6}
f(w_-(x_0, t_0), \mu(\{w_-(\cdot , t_0)\leq w_-(x_0, t_0)\}))\leq f(\ol{h}, \mu(D_{\ol{h}}(-d_0))) \leq -\lambda d_0.
\end{equation}
Now combining \eqref{eq exp3}, \eqref{eq exp5} and \eqref{eq exp6}, we have
\begin{equation}\label{eq exp7}
\phi_t(x_0, t_0) +|\nabla \phi(x_0, t_0)| f(w_-(x_0, t_0), \mu(\{w_-(\cdot , t_0)\leq w_-(x_0, t_0)\}))\leq 0. 
\end{equation}
For the case $x_0\in D_{\ol{h}}(0)^c$, it is easily seen that $\phi_t(x_0, t_0)\leq 0$ and $\{w_-(\cdot , t_0)< w_-(x_0, t_0)\}=D_{\ol{h}}(0)$. By (A2) and \eqref{h-threshold} we have 
\[
f(w_-(x_0, t_0), \mu(\{w_-(\cdot , t_0)< w_-(x_0, t_0)\}))\leq f(\ol{h}, \mu(D_{\ol{h}}(0)))\leq 0.
\]
It follows that \eqref{eq exp7} still holds. 
We have shown that $w_-$ is a subsolution of \eqref{E1} provided that $g_-\in C^1((0, \infty))$. For the general case when $g_-\in C([0, \infty))$, we can approximate $g_-$ locally uniformly by a sequence of such $C^1$ functions. Since we have verified that $w_-$ is a subsolution of \eqref{E1}, using the comparison principle, we immediately get $w_-\leq u$ in $\R^n\times (0, \infty)$, which implies \eqref{exp est2}.

\subsection{Upper bound for the convergence rate}
Following the PDE method mentioned above, one may take in a symmetric manner
\[
w_+(x, t)=g_+(\dist(x, E_{\ol{h}}(0))e^{-\lambda t}))+\ol{h}, \quad\text{for $(x, t)\in \R^n\times [0, \infty)$, $\lambda>0$,}
\]
and attempt to show that $w_+$ is a supersolution of \eqref{E1}. 
However, for the argument to work in this case we need to find $\lambda>0$ such that
$f(\ol{h}, \mu(E_{\ol{h}}(s)))\geq \lambda s$ for all $s\geq 0$,
which fails to hold, given the boundedness of $f$. Hence, we continue to adopt the control-theoretic interpretation to look into the upper bound of $u(x, t)-\ol{h}$. In this case, we impose an additional assumption on the growth rate of dynamics $f$ near the critical level set. 

\begin{prop}\label{prop exponent-upper}
Assume that (A1)--(A4) hold and there exists $\ol{h}\in \R$ satisfying \eqref{h-threshold}. Let $u$ be the unique solution to \eqref{E1} and \eqref{initial}. Assume that \eqref{reg initial} holds. 
Let $g_+\in C([0, \infty))$ be a strictly increasing function with $g_+(0)=0$ such that \eqref{ini-barrier1} holds. Then, for any compact set $K\subset \R^n$, there exist  $\lambda>0$ and $C_K>0$ such that 
\begin{equation}\label{exp est3}
u(x, t)-\ol{h}\leq g_+\left(C_Ke^{-\lambda t}\right) \quad \text{for all $x\in K$ and $t> 0$ large.}
\end{equation}
\end{prop}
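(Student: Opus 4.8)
The plan is to work entirely through the control representation. By Theorem~\ref{thm:control} the solution $u$ equals $u_E$, so Proposition~\ref{prop:parallel-surfaces} gives $\{u(\cdot,t)\le h\}=E_h(\delta_E(t,h))$ for every $h\in\R$ and $t\ge0$. Set $R_K:=\max_{x\in K}\dist(x,E_{\overline{h}}(0))$, which is finite by (A4) (we may assume $E_{\overline{h}}(0)\neq\emptyset$). Since $E_{\overline{h}}(0)\subset E_h(0)$ for $h>\overline{h}$, we have $\dist(x,E_h(0))\le R_K$ for all $x\in K$ and $h>\overline{h}$, and therefore $u(x,t)\le h$ holds on $K$ as soon as $\delta_E(t,h)\ge R_K$. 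Thus \eqref{exp est3} will follow once I bound, for levels $h$ slightly above $\overline{h}$, the time needed for $\delta_E(\cdot,h)$ to reach the value $R_K$.

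The crucial geometric input is that \eqref{ini-barrier1} is equivalent to $E_{\overline{h}}(\rho)\subset E_{\overline{h}+g_+(\rho)}(0)$ for all $\rho\ge0$. Fix $\rho>0$ small and put $h_\rho:=\overline{h}+g_+(\rho)>\overline{h}$. A short triangle-inequality computation upgrades this to $E_{h_\rho}(\delta)\supset E_{\overline{h}}(\rho+\delta)$ for every $\delta\ge0$. Next I choose $b:=\min\{b_0,\tfrac1{2R_K}\}$, where $b_0>0$ is such that \eqref{reg initial} holds for all $0<b\le b_0$, and obtain the corresponding $\lambda:=a>0$; since $(f_{\overline{h}}\circ\mu)(E_{\overline{h}}(0))\ge0$ by \eqref{h-threshold}, \eqref{reg initial} at $h=\overline{h}$ yields $f(\overline{h},\mu(E_{\overline{h}}(s)))\ge\lambda s$ for $0\le s\le1/b$. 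Because $h_\rho>\overline{h}$, Lemma~\ref{lem:reform-ODE} makes $\delta_E(\cdot,h_\rho)$ a $C^1$, strictly increasing solution of \eqref{dist ODE}; combining the set inclusion with the monotonicity of $\mu$ and of $f$ in both variables (A2) gives the differential inequality
\[
(\delta_E)_t(t,h_\rho)=f\big(h_\rho,\mu(E_{h_\rho}(\delta_E(t,h_\rho)))\big)\ \ge\ f\big(\overline{h},\mu(E_{\overline{h}}(\rho+\delta_E(t,h_\rho)))\big)\ \ge\ \lambda\big(\rho+\delta_E(t,h_\rho)\big),
\]
valid as long as $\rho+\delta_E(t,h_\rho)\le1/b$, which — since $\rho\le R_K$ and $1/b\ge2R_K$ — is guaranteed whenever $\delta_E(t,h_\rho)\le R_K$.

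From here everything is routine. A Gronwall comparison with $\delta_E(0,h_\rho)=0$ (the validity window being handled as usual by the monotonicity of $\delta_E(\cdot,h_\rho)$) gives $\delta_E(t,h_\rho)\ge\rho(e^{\lambda t}-1)$ until $\delta_E(\cdot,h_\rho)$ reaches $R_K$, hence $\delta_E(t,h_\rho)\ge R_K$ — and therefore $u(x,t)\le h_\rho$ for all $x\in K$ — whenever $\rho(e^{\lambda t}-1)\ge R_K$. Given $t$ large I then take $\rho=\rho(t):=R_K/(e^{\lambda t}-1)$; for $t$ large this satisfies $\rho(t)\le\min\{R_K,\rho_1\}$, where $\rho_1>0$ is chosen with $g_+(\rho_1)\le b$, so that $u(x,t)-\overline{h}\le g_+(\rho(t))\le g_+(2R_Ke^{-\lambda t})$ once $e^{\lambda t}\ge2$, and $C_K:=2R_K$ finishes the proof. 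The main obstacle is exactly the one flagged before the statement: the bare inequality $(\delta_E)_t\ge\lambda\delta_E$ issuing from $\delta_E(0,h)=0$ is vacuous, and no lower bound on $f(h,\mu(E_h(0)))$ as $h\downarrow\overline{h}$ is available; it is the head start $\rho$ built into $h_\rho$ through \eqref{ini-barrier1} that turns this into a genuine exponential escape from $0$.
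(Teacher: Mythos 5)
Your proof is correct and takes essentially the same route as the paper: both rest on the parallel-set representation $\{u(\cdot,t)\le h\}=E_h(\delta_E(t,h))$, convert \eqref{reg initial} into a Gronwall inequality for $\delta_E$ (the paper writes $(\delta_E)_t\ge f(h,\mu(E_h(0)))+\lambda\,\delta_E$ and bounds $f(h,\mu(E_h(0)))\ge \lambda\, g_+^{-1}(h-\ol{h})$ via \eqref{ini-barrier1}, which is exactly the ``head start'' $\rho$ you encode through the inclusion $E_{h_\rho}(\delta)\supset E_{\ol{h}}(\rho+\delta)$), and then choose a time-dependent level $h(t)=\ol{h}+g_+(\mathrm{const}\cdot e^{-\lambda t})$ so that $K\subset E_{h(t)}(\delta_E(t,h(t)))$. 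Your bookkeeping only applies \eqref{reg initial} at $h=\ol{h}$, a minor simplification, but the substance of the argument coincides with the paper's.
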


\begin{proof}
Let $K$ be a compact subset of $\R^n$. We can choose $b>0$ small such that 
\begin{equation}\label{reg initial2}
\ol{d}:= \sup_{x\in K} {\rm dist}(x, E_{\overline{h}}(0)) + 1\leq 1/b.
\end{equation}


Let $\delta_E$ be the maximal solution of \eqref{dist ODE} with $W_h=E_h$ for $h\geq \ol{h}$.
Applying \eqref{reg initial} to 
 \eqref{dist ODE} and using the simplified notation $E_{h, t}$ as in \eqref{abbrev}, we have, with $\lambda=a$ in \eqref{reg initial},  
\begin{equation}\label{pre gronwall}
(\delta_E)_t(t, h)= f(h, \mu(E_{h, t}))\geq f(h, \mu(E_{h, 0})) +\lambda \delta_E(t, h) 
\end{equation}
for any  $t > 0$ and $\overline{h} \leq h \le \overline{h} + b$ satisfying $\delta_E(t,h) \le \ol{d}$. 
This yields
\begin{equation}\label{gronwall barrier1}
\delta_E(t, h) \ge {1\over \lambda} f(h, \mu(E_h(0))) (e^{\lambda t} -1)
\end{equation}
for $h, t$ satisfying the same conditions. Since $u_0\leq w_+(\cdot, 0)$ in $\R^n$ implies 
$E_{\ol{h}}(s_h)\subset E_h(0)$ for $s_h:=g_+^{-1}(h-\ol{h})$, by (A2) and \eqref{reg initial} again, we have
\begin{equation}\label{gronwall barrier2}
f(h, \mu(E_h(0)))\geq f(\ol{h}, \mu(E_{\ol{h}}(s_h)))\geq \lambda s_h.
\end{equation}
Taking 
\begin{equation}\label{h close}
h(t)=g_+\left(\frac{1}{e^{\lambda t}-1}\sup_{x\in K}{\dist (x, E_{\ol{h}}(0))}\right)+\ol{h},
\end{equation}
we have $\ol{h}\leq h(t)\leq \ol{h}+b$ when $t>0$ is large. Also, for $x\in K$ and sufficiently large $t>0$, by \eqref{gronwall barrier1}, \eqref{gronwall barrier2} and \eqref{h close} we get
\begin{equation}\label{h-asymptotics}
\delta_E(t, h(t))\geq \dist(x, E_{\ol{h}}(0))\geq \dist(x, E_{h(t)}(0))
\end{equation}
provided that $\delta_E(t, h(t))\leq \ol{d}$ holds. If $\delta_E(t, h(t))> \ol{d}$, then in view of \eqref{reg initial2} we still have $\delta_E(t, h(t))\geq \dist(x, E_{h(t)}(0))$. In both cases, we get $K\subset E_{h(t)}(\delta_E(t, h(t)))$ when $t>0$ is large. Adopting \eqref{coincide1} in Proposition \ref{prop:parallel-surfaces}, we obtain
\[
u(x, t)\leq h(t)=g_+\left(\frac{1}{e^{\lambda t}-1}\sup_{x\in K}{\dist (x, E_{\ol{h}}(0))}\right)+\ol{h} \quad \text{for all $x\in K$.}
\]
We thus can choose $C_K>0$ such that \eqref{exp est3} holds. 
\end{proof}

\begin{rem}\label{rmk improved decay}
In some good cases,  
we can obtain \eqref{exp est3} with $\lambda>0$ independent of the compact set $K$. For example, if, instead of \eqref{reg initial}, there exist $\lambda>0$ and  $0<\beta\leq 1$ such that, 
\begin{equation}\label{reg initial change}
f(h, \mu(E_{h}(s)))\geq f(h, \mu(E_{h}(0)))+{\lambda s\over 1+s^\beta}\quad \text{for all $\ol{h}\leq h\leq \ol{h}+b$ and $s>0$,}
\end{equation}
then in the proof above \eqref{pre gronwall} can be substituted with 
\[
(\delta_E)_t(t, h)\geq f(h, \mu(E_{h, 0})) +{\lambda \delta_E(t, h) \over 1+\delta_E(t, h)^\beta}\geq  {\lambda \left(\delta_E(t, h)+{1\over \lambda}f(h, \mu(E_{h, 0}))\right) \over 1+(\delta_E(t, h)+{1\over \lambda}f(h, \mu(E_{h, 0})))^\beta},
\]
which yields 
\[
{1\over \beta}\left(\delta_E(t, h)+{1\over \lambda}f(h, \mu(E_{h, 0}))\right)^\beta+ \log \left(\frac{\lambda \delta_E(t, h)+f(h, \mu(E_{h, 0}))}{f(h, \mu(E_{h, 0}))}\right)\geq {1\over \beta}\left({1\over \lambda}f(h, \mu(E_{h, 0}))\right)^\beta+\lambda t.
\]
Then there exists $C>0$ such that
\[
C \delta_E(t, h)^\beta \geq \lambda t + \log f(h, \mu(E_{h, 0}))
\]
holds for all $t>0$ large and $\ol{h}\leq h\leq \ol{h}+b$. Note that in place of \eqref{gronwall barrier2} we now have
\[
f(h, \mu(E_h(0)))\geq f(\ol{h}, \mu(E_{\ol{h}}(s_h)))\geq {\lambda s_h\over  1+s_h^\beta}.
\]     
We then obtain \eqref{h-asymptotics} for all $x\in K$ and $t>0$ sufficiently large by letting 
\[
h(t)=\ol{h}+g_+(A e^{C{d}_K^\beta-\lambda t})
\]
with ${d}_K:=\sup_{x\in K}\dist (x, E_{\ol{h}}(0))$ and some $A>1/\lambda$. It thus follows that 
\begin{equation}\label{upper decay}
\sup_{x\in K} u(x, t)-\ol{h}\leq g_+(A e^{C{d}_K^\beta-\lambda t})\quad \text{for all $t>0$ large. }
\end{equation}
The exponent $\lambda$ of the exponential decay in time is independent of $K$. 
\end{rem}

 Theorem \ref{thm exponent} is an immediate consequence of Proposition \ref{prop exponent-lower} and Proposition \ref{prop exponent-upper}. 
 We can obtain \eqref{exp est2} and \eqref{exp est3} with $g_\pm$ replaced by the modulus of continuity $g$ of $u_0$.

Let us further discuss the assumption \eqref{reg initial}. It can actually be understood as a combined condition on the regularity of $u_0$ and the non-degeneracy of $f$ in the measure argument. 

\begin{prop}\label{prop reg initial}
Assume that (A1)--(A4) hold and there exists $\ol{h}\in \R$ satisfying \eqref{h-threshold}. Assume that for any bounded set $A\subset \R^n$, there exists $\Theta_{A}> 0$ such that \eqref{eq a5} holds. 
Assume that 
\begin{equation}\label{non-degeneracy}
f(\overline{h}, \cdot) \in C^1([0, \infty)), \quad f_q(\overline{h}, \mu(E_{\overline{h}}(0)))>0.
\end{equation}
Assume in addition that $u_0$ is quasiconvex and $E_{\ol{h}}(0)=\{u_0\leq \ol{h}\}$ has nonempty interior. Then \eqref{reg initial} holds. 
\end{prop}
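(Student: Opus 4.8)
The plan is to reduce \eqref{reg initial} to a lower bound for the growth rate of $s\mapsto\mu(E_h(s))$ coming from the convexity of the parallel sets, and then to transfer that bound to $f$ using the non-degeneracy condition \eqref{non-degeneracy}. First, since $u_0$ is quasiconvex, each $D_h(0)=\{u_0<h\}$ is convex, hence so is $E_h(0)=\{u_0\le h\}=\bigcap_{h'>h}\{u_0<h'\}$, and therefore so is its closed $s$-neighborhood $E_h(s)$ for every $s\ge 0$. Since $E_{\overline{h}}(0)\subset E_h(s)$ for all $h\ge\overline{h}$, $s\ge 0$, and $E_{\overline{h}}(0)$ has nonempty interior, each $E_h(s)$ is a bounded convex body with nonempty interior, and monotonicity of perimeter under inclusion of convex bodies gives $0<{\rm Per}(E_{\overline{h}}(0))=:P_0\le {\rm Per}(E_h(s))<\infty$; since $\tau\mapsto {\rm Per}(E_h(\tau))$ is nondecreasing, ${\rm Per}(E_h(\tau))\ge P_0$ for all $\tau\ge0$.

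Next I fix $b>0$ small. By the coercivity \eqref{as-initial}, the family $\{E_h(s):\overline{h}\le h\le\overline{h}+b,\ 0\le s\le 1/b\}$ is contained in a fixed bounded set $A$, and \eqref{eq a5} gives $\Theta\ge\Theta_A>0$ on $A$. Applying the co-area formula to the $1$-Lipschitz function $x\mapsto\dist(x,E_h(0))$ exactly as in the proofs of Theorems~\ref{prop:fattening-o-h-1} and \ref{prop:fattening-o-h-3}, I obtain, for all $h\in[\overline{h},\overline{h}+b]$ and $0\le s\le 1/b$,
\[
\mu(E_h(s))-\mu(E_h(0))=\int_{\{0<\dist(\cdot,E_h(0))\le s\}}\Theta\ \ge\ \Theta_A\int_0^s {\rm Per}(E_h(\tau))\,d\tau\ \ge\ \Theta_A P_0\, s=:c_0 s .
\]

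It remains to transfer this to $f$. By \eqref{non-degeneracy} and continuity of $f_q(\overline{h},\cdot)$ there are $m,\rho>0$ with $f_q(\overline{h},q)\ge m$ for $q\in[\mu(E_{\overline{h}}(0)),\mu(E_{\overline{h}}(0))+\rho]$. Since $h\mapsto\mu(E_h(0))=\mu(\{u_0\le h\})$ is right-continuous at $\overline{h}$ with value $\mu(E_{\overline{h}}(0))$, and $\mu(E_h(s))-\mu(E_h(0))$ can be made small uniformly in $h$ by taking $s$ small (the Steiner-type bound and the absolute continuity of $\mu$ are uniform over convex subsets of $A$), after shrinking $b$ I may fix $s_\ast>0$ so that $\mu(E_h(0)),\mu(E_h(s))\in[\mu(E_{\overline{h}}(0)),\mu(E_{\overline{h}}(0))+\rho]$ whenever $\overline{h}\le h\le\overline{h}+b$ and $0\le s\le s_\ast$. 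On this range, combining the monotonicity $f(h,\cdot)\ge f(\overline{h},\cdot)$ from (A2), the derivative bound for $f(\overline{h},\cdot)$, the joint continuity of $f$ near $(\overline{h},\mu(E_{\overline{h}}(0)))$, and the estimate above, I derive $f(h,\mu(E_h(s)))-f(h,\mu(E_h(0)))\ge m'c_0 s$ for some $m'>0$. For the complementary range $s_\ast\le s\le 1/b$, the estimate above gives $\mu(E_h(s))-\mu(E_h(0))\ge c_0 s_\ast$; since $\{(h,q_1,q_2):\overline{h}\le h\le\overline{h}+b,\ \mu(E_{\overline{h}}(0))\le q_1\le q_2\le\mu(A),\ q_2-q_1\ge c_0 s_\ast\}$ is compact and $f$ is continuous and strictly increasing in $q$ by \eqref{monotone eq}, we get $f(h,\mu(E_h(s)))-f(h,\mu(E_h(0)))\ge\eta>0\ge\eta b\,s$. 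Taking $a:=\min(m'c_0,\eta b)$ yields \eqref{reg initial}.

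I expect the last step to be the genuine difficulty: the non-degeneracy \eqref{non-degeneracy} is imposed only along the slice $h=\overline{h}$, while \eqref{reg initial} must hold uniformly for $h\in[\overline{h},\overline{h}+b]$, so one has to pass from a derivative bound for $f(\overline{h},\cdot)$ near $\mu(E_{\overline{h}}(0))$ to an increment bound for $f(h,\cdot)$ with only (A2) and the continuity of $f$ available. This is precisely why $b$ must be chosen small enough to keep $\mu(E_h(0))$ and $\mu(E_h(s))$ in a small neighborhood of $\mu(E_{\overline{h}}(0))$, and why the small-$s$ and large-$s$ regimes are separated. The geometric input (convexity and the perimeter lower bound) and the co-area estimate are routine and mirror arguments already used for Theorems~\ref{prop:fattening-o-h-1} and \ref{prop:fattening-o-h-3}.
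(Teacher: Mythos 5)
Your overall strategy coincides with the paper's: first a uniform lower bound $\mu(E_h(s))-\mu(E_h(0))\ge cs$ for $\overline{h}\le h\le \overline{h}+b$ and $0\le s\le 1/b$, then a transfer to $f$ via a lower Lipschitz-type bound for $f(h,\cdot)$ near $\mu(E_{\overline{h}}(0))$ coming from \eqref{non-degeneracy}. For the measure-growth half you argue differently, and in fact more simply, than the paper: you use monotonicity of perimeter under inclusion of nested convex bodies ($E_{\overline{h}}(0)\subset E_h(\tau)$, hence ${\rm Per}(E_h(\tau))\ge {\rm Per}(E_{\overline{h}}(0))>0$) together with the co-area formula and \eqref{eq a5}, whereas the paper invokes the Steiner formula (Proposition \ref{prop:steiner}) and the Hausdorff-convergence Lemma \ref{lem:con-coe}; your route avoids those tools and is correct as written. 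Your splitting into a small-$s$ regime and a compactness regime for $s_\ast\le s\le 1/b$ is a harmless variant of the paper's single estimate (the paper picks $c<b\vep$ so that $cs<\vep$ for all $s\le 1/b$ and applies its inequality \eqref{min-max-f} once, using the strict monotonicity \eqref{monotone eq}); just phrase the choice of $b$ so that the conclusion is obtained for every sufficiently small $b$, as \eqref{reg initial} requires, rather than ``after shrinking $b$''.

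The one step to be careful about is exactly the one you flag. The ingredients you cite --- $f(h,\cdot)\ge f(\overline{h},\cdot)$ from (A2), the derivative bound for $f(\overline{h},\cdot)$, and joint continuity of $f$ --- do not by themselves yield $f(h,q_2)-f(h,q_1)\ge m'(q_2-q_1)$ uniformly for $h$ slightly above $\overline{h}$: monotonicity in the first variable only gives the pointwise comparison $f(h,q)\ge f(\overline{h},q)$, and writing $f(h,q_2)-f(h,q_1)\ge f(\overline{h},q_2)-f(\overline{h},q_1)-\bigl(f(h,q_1)-f(\overline{h},q_1)\bigr)$ shows that continuity in $h$ cannot absorb the error when $q_2-q_1$ (here of order $s$) is itself small; one can construct $f$ satisfying (A1)--(A2), continuous, with $f(\overline{h},\cdot)\in C^1$ and $f_q(\overline{h},\mu(E_{\overline{h}}(0)))>0$, yet with $f(h,\cdot)$ nearly flat on a short interval just above $\mu(E_{\overline{h}}(0))$ for $h$ slightly above $\overline{h}$. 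The paper does not derive this bound from those facts either: it reads \eqref{non-degeneracy} as directly furnishing the two-variable estimate \eqref{min-max-f} (there exist $L,\vep>0$ with $f(h,q_2)-f(h,q_1)\ge L(q_2-q_1)$ for $\overline{h}\le h\le \overline{h}+\vep$ and $q_1,q_2$ in a right neighborhood of $\mu(E_{\overline{h}}(0))$) and then concludes essentially as you do. So your treatment is on par with the paper's at the crux; the clean way to close it is to invoke the non-degeneracy in the uniform form \eqref{min-max-f}, rather than to attempt to deduce that form from (A2) and continuity alone.
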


Under the convexity condition of level sets, we can utilize the Steiner formula, which provides a representation of $m(D_h(s))$ and $m(E_h(s))$ in terms of a polynomial of degree $n$ for $s > 0$ with geometrical coefficients determined by $D_h(0)$ and $E_h(0)$, respectively.  For the reader's convenience, we present the formula in a form applicable to our problem and refer to \cite[Theorem 5.6--Remark 5.10]{Fe} and \cite[Theorem 3.2.35]{Fe2} for more general results and proofs.


\begin{prop}\label{prop:steiner}
Let $A \subset \mathbb{R}^n$ be a bounded convex body, that is, $A$ is convex with nonempty interior. 
There exist constants $\Phi_i(A)$ for $i=0, 1, 2, \cdots n$ such that $\Phi_0(A) = m(A)$, $\Phi_1(A) = {\rm Per}(A)$ and 
\[ m(\{x \in \mathbb{R}^n: {\rm dist}(x, A) \le s\}) = \sum_{i=0}^n \Phi_i(A) s^i \quad \text{for} \; \; s \ge 0. \]
Furthermore, if a family of bounded convex bodies $A_j \subset \mathbb{R}^n$ converges to a bounded convex body $A \subset \mathbb{R}^n$ in the Hausdorff metric, then 
\[ \lim_{j \to \infty} \Phi_i(A_j) = \Phi_i(A) \quad \text{for} \; \; i=0, 1,2, \cdots, n. \]
\end{prop}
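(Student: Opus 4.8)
The plan is to reduce Proposition \ref{prop:steiner} to the classical theory of mixed volumes of convex bodies, keeping self-contained only the two reductions that are specific to our setting. Since $\dist(x,A)=\dist(x,\cl A)$ and $\cl A$ is again a bounded convex body with $m(\cl A)=m(A)$, I may assume $A$ is compact. The first, purely elementary, step is the identity
\[
\{x\in\R^n:\dist(x,A)\le s\}=A+s\bar B=\{a+sb:\ a\in A,\ |b|\le 1\}\qquad(s\ge 0),
\]
valid for convex $A$: the inclusion ``$\subset$'' holds for any closed set (a point within distance $s$ of $A$ is a point of $A$ translated by a vector of length $\le s$), and ``$\supset$'' is immediate.

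The second step invokes Minkowski's theorem: for convex bodies $K,L\subset\R^n$ the map $s\mapsto m(K+sL)$, $s\ge 0$, is a polynomial of degree at most $n$,
\[
m(K+sL)=\sum_{i=0}^n\binom{n}{i}V(K[n-i],L[i])\,s^i,
\]
whose coefficients are the mixed volumes $V(\cdot)$, a symmetric and Minkowski-multilinear functional. Applying this with $K=A$, $L=\bar B$ and setting $\Phi_i(A):=\binom{n}{i}V(A[n-i],\bar B[i])$ yields the stated expansion. Then $\Phi_0(A)=V(A[n])=m(A)$ by definition, while $\Phi_1(A)=nV(A[n-1],\bar B)$ is, by the first-variation (Minkowski content) formula $m(A+s\bar B)=m(A)+s\,\mathcal{H}^{n-1}(\partial A)+o(s)$, equal to $\mathcal{H}^{n-1}(\partial A)$; since a convex body has Lipschitz boundary, this coincides with the De~Giorgi perimeter, giving $\Phi_1(A)=\Per(A)$. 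For the continuity assertion I would invoke that mixed volumes are continuous with respect to the Hausdorff metric on the space of convex bodies (again a standard theorem of convex geometry): thus $A_j\to A$ in the Hausdorff metric forces $V(A_j[n-i],\bar B[i])\to V(A[n-i],\bar B[i])$, i.e.\ $\Phi_i(A_j)\to\Phi_i(A)$ for each $i$.

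The main obstacle — really, the entire analytic content — is the existence of the polynomial expansion with well-defined, continuous mixed-volume coefficients: this is Minkowski's theorem together with its continuity counterpart, whose proofs require developing mixed-volume theory (the symmetric multilinearity of $(K_1,\dots,K_n)\mapsto V(K_1,\dots,K_n)$, the resulting Steiner-type polynomial identity, and Hausdorff continuity). Since the paper needs only the statement and explicitly defers to \cite{Fe,Fe2} for it (the same material also appears in Schneider's monograph on convex bodies), in the write-up I would cite those references for this core and present in full only the identification $\{x:\dist(x,A)\le s\}=A+s\bar B$ together with the computation $\Phi_0(A)=m(A)$ and $\Phi_1(A)=\Per(A)$, which are all that is used in the application to Proposition \ref{prop reg initial}.
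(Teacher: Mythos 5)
Your proposal is correct, and in substance it does what the paper does: the paper offers no proof of Proposition \ref{prop:steiner} at all, simply citing Federer (\cite[Theorem 5.6--Remark 5.10]{Fe}, \cite[Theorem 3.2.35]{Fe2}), where the Steiner polynomial and the continuity of its coefficients are established in the more general framework of curvature measures for sets of positive reach. You route the core instead through Minkowski's mixed-volume theory (Steiner formula $m(A+s\bar B)=\sum_i\binom{n}{i}V(A[n-i],\bar B[i])s^i$ plus Hausdorff continuity of mixed volumes), which is an equally standard citation, and you add the small explicit reductions the paper leaves implicit: the identity $\{x:\dist(x,A)\le s\}=A+s\bar B$ (which, as you note, needs only closedness, not convexity), the identification $\Phi_0(A)=m(A)$, and $\Phi_1(A)=\Per(A)$ via the equality of the outer Minkowski content of $\partial A$ with $\mathcal H^{n-1}(\partial A)$ and hence with the De Giorgi perimeter for convex bodies. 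The only cosmetic gap is in your reduction to compact $A$: besides $m(\cl A)=m(A)$ you should also record $\Per(\cl A)=\Per(A)$ (the characteristic functions of $A$ and $\cl A$ differ only on the Lebesgue-null set $\partial A$), so that the identification $\Phi_1(A)=\Per(A)$ transfers back to the possibly non-closed body; this is immediate but worth a sentence. With that word added, your write-up is a fully adequate substitute for the paper's citation-only treatment, and arguably more informative for the application in Proposition \ref{prop reg initial}, which uses exactly $\Phi_0$, $\Phi_1$ and the continuity statement.
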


When $u_0$ is quasiconvex in $\R^n$, by our control-theoretic representation formulas \eqref{new upper value} and \eqref{new lower value}, we can easily see that $u(\cdot, t)$ is quasiconvex for all $t>0$. 
We refer to \cite{C, KLM} for more general results on quasiconvexity preserving property for 
nonlocal parabolic equations. 

 Moreover, if in addition
\begin{equation}\label{non-fattening-convex}
(f_{\overline{h}} \circ \mu)(D_{\overline{h}}(0)) = (f_{\overline{h}} \circ \mu)(E_{\overline{h}}(0)) = 0
\end{equation}
holds, then
it immediately follows from Proposition \ref{cor:finite-infinite}(b) that $u(x, t)>\ol{h}$ for any $x\in \R^n$ satisfying $u_0(x)>\ol{h}$,  since the perimeters of the parallel sets are bounded due to the convexity and boundedness of the sublevel sets of $u_0$.


\begin{lem}\label{lem:con-coe}
Assume that (A1)--(A4) hold.  Assume that $u_0$ is quasiconvex in $\R^n$. 
Let $\overline{h}$ satisfy \eqref{h-threshold} with $D_{\ol{h}}(0)\neq \emptyset$. If \eqref{non-fattening-convex} holds, then
\[ \lim_{h \to \overline{h}+} \Phi_i(D_h(0)) = \Phi_i(D_{\overline{h}}(0)), \; \; \lim_{h \to \overline{h}+} \Phi_i(E_h(0)) = \Phi_i(E_{\overline{h}}(0)) \quad \text{for all} \ i=0, 1, 2, \cdots, n,
 \]
where $\Phi_i$ given in Proposition {\rm\ref{prop:steiner}}. 
\end{lem}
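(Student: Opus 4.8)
The plan is to reduce the statement to the Hausdorff convergence of the (closed) sublevel sets as $h\downarrow\ol{h}$ together with the continuity of the coefficients $\Phi_i$ from Proposition \ref{prop:steiner}. First I would record the structure of the relevant sets: since $u_0$ is quasiconvex and satisfies the coercivity condition (A4), each of $D_h(0)=\{u_0<h\}$ and $E_h(0)=\{u_0\le h\}$ is bounded and convex, and for $h\ge\ol{h}$ we have $D_{\ol{h}}(0)\subset D_h(0)\subset E_h(0)$ with $D_{\ol{h}}(0)$ nonempty and open, so $\cl{D_h(0)}$ and $E_h(0)$ are genuine convex bodies (compact, convex, with nonempty interior). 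Because $\{x:\dist(x,A)\le s\}=\{x:\dist(x,\cl A)\le s\}$, the Steiner coefficients satisfy $\Phi_i(D_h(0))=\Phi_i(\cl{D_h(0)})$ and $\Phi_i(D_{\ol{h}}(0))=\Phi_i(\cl{D_{\ol{h}}(0)})$, so it suffices to work with closures throughout.

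Next I would identify the limiting set. Monotonicity in $h$ shows that $\cl{D_h(0)}$ and $E_h(0)$ decrease as $h\downarrow\ol{h}$, and since $u_0(x)\le\ol{h}$ iff $u_0(x)<h$ for all $h>\ol{h}$,
\[
\bigcap_{h>\ol{h}}E_h(0)=\{u_0\le\ol{h}\}=E_{\ol{h}}(0), \qquad
E_{\ol{h}}(0)=\bigcap_{h>\ol{h}}D_h(0)\subset\bigcap_{h>\ol{h}}\cl{D_h(0)}\subset\bigcap_{h>\ol{h}}E_h(0)=E_{\ol{h}}(0),
\]
so both families decrease to the nonempty compact convex set $E_{\ol{h}}(0)$. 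A standard compactness argument then gives Hausdorff convergence: as $E_{\ol{h}}(0)\subset\cl{D_h(0)}\subset E_h(0)$, one side of the Hausdorff distance vanishes, and if $\sup_{x\in E_h(0)}\dist(x,E_{\ol{h}}(0))$ did not tend to $0$ one could extract, along some $h_j\downarrow\ol{h}$, points $x_j\in E_{h_j}(0)$ at distance at least $\ep$ from $E_{\ol{h}}(0)$; by boundedness a subsequence converges to a point $x_*$ lying in every $E_h(0)$, hence in $E_{\ol{h}}(0)$, a contradiction. Thus $\cl{D_h(0)}\to E_{\ol{h}}(0)$ and $E_h(0)\to E_{\ol{h}}(0)$ in the Hausdorff metric, and Proposition \ref{prop:steiner} yields $\Phi_i(\cl{D_h(0)})\to\Phi_i(E_{\ol{h}}(0))$ and $\Phi_i(E_h(0))\to\Phi_i(E_{\ol{h}}(0))$ for all $i$.

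It remains to check $\Phi_i(E_{\ol{h}}(0))=\Phi_i(D_{\ol{h}}(0))$, and this is precisely where \eqref{non-fattening-convex} enters. Since $f(\ol{h},\cdot)$ is strictly increasing by (A2), \eqref{non-fattening-convex} forces $\mu(D_{\ol{h}}(0))=\mu(E_{\ol{h}}(0))$, hence $m(E_{\ol{h}}(0)\setminus D_{\ol{h}}(0))=0$ by (A3) (as $\Theta>0$). For the convex pair $D_{\ol{h}}(0)\subset E_{\ol{h}}(0)$ with $D_{\ol{h}}(0)$ open and nonempty, a null difference forces $E_{\ol{h}}(0)\subset\cl{D_{\ol{h}}(0)}$: if $x\in E_{\ol{h}}(0)\setminus\cl{D_{\ol{h}}(0)}$, choose a ball $B(x,r)$ disjoint from $D_{\ol{h}}(0)$ and a point $y\in D_{\ol{h}}(0)\subset\inter E_{\ol{h}}(0)$; the half-open segment $[y,x)$ lies in $\inter E_{\ol{h}}(0)$, so $B(x,r)\cap\inter E_{\ol{h}}(0)$ is nonempty and open, contradicting $m(E_{\ol{h}}(0)\setminus D_{\ol{h}}(0))=0$. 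Therefore $\cl{D_{\ol{h}}(0)}=E_{\ol{h}}(0)$, so $\Phi_i(D_{\ol{h}}(0))=\Phi_i(\cl{D_{\ol{h}}(0)})=\Phi_i(E_{\ol{h}}(0))$, and combining with the previous paragraph completes the proof. The only genuinely delicate point is recognizing that $D_h(0)$ shrinks to $E_{\ol{h}}(0)$ rather than to $D_{\ol{h}}(0)$ as $h\downarrow\ol{h}$, which is exactly why assumption \eqref{non-fattening-convex} is indispensable for matching the limit of $\Phi_i(D_h(0))$ with $\Phi_i(D_{\ol{h}}(0))$.
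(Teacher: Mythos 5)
Your proof is correct and rests on the same two pillars as the paper's: Hausdorff convergence of the sublevel sets combined with the continuity of the Steiner coefficients from Proposition \ref{prop:steiner}, with \eqref{non-fattening-convex} and the strict monotonicity of $f(\ol{h},\cdot)$ converted into the statement that $E_{\ol{h}}(0)\setminus D_{\ol{h}}(0)$ cannot contain a set of positive measure, which is excluded by a convexity argument. The only difference is organizational: you first show $\cl{D_h(0)},\,E_h(0)\to E_{\ol{h}}(0)$ by monotone intersection and compactness and then prove $\cl{D_{\ol{h}}(0)}=E_{\ol{h}}(0)$ (so the $\Phi_i$ agree, being unchanged under taking closures), whereas the paper proves Hausdorff convergence of $D_h(0)$ to $D_{\ol{h}}(0)$ directly by contradiction, its cone construction $C(B,x)$ playing exactly the role of your segment-plus-ball argument.
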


\begin{proof}
In view of Proposition \ref{prop:steiner}, it is sufficient to prove that $D_h(0)$ and $E_h(0)$ converge to $D_{\overline{h}}(0)$ and $E_{\overline{h}}(0)$, respectively, as $h \to \overline{h}+$ in the Hausdorff metric. 
To show the convergence of $D_h(0)$, we assume by contradiction that there exist $\varepsilon > 0$, $h_j > \overline{h}$ and $x_j \in D_{h_j}(0)$ such that $\lim_{j \to \infty} h_j = \overline{h}$ and ${\rm dist}(x_j, D_{\overline{h}}(0)) \ge \varepsilon$  for any $j \in \mathbb{N}$. 
Then, by taking a subsequence we have $x_j\to x$ as $j\to\infty$ for some $x\in E_{\overline{h}}(0)$ satisfying 
\begin{equation}\label{x-dist} 
{\rm dist}(x, D_{\overline{h}}(0)) \ge \varepsilon. 
\end{equation}
Choose an open ball $B \subset D_{\overline{h}}(0)$ and define a cone $C(B, x)$ as 
\[ C(B,x) := \{ \lambda x + (1-\lambda) y: \lambda \in [0,1], \; \; y \in B\}. \]
We then see $C(B,x) \subset E_{\overline{h}}(0)$ due to the convexity of $E_{\overline{h}}(0)$. 
Furthermore, by \eqref{x-dist} we have
\[ \mu(E_{\overline{h}}(0)) \ge \mu(C(B,x) \cap B_{\varepsilon/2}(x)) + \mu(D_{\overline{h}}(0)), \]
which contradicts \eqref{non-fattening-convex} since $\mu(C(B,x) \cap B_{\varepsilon/2}(x))$ is obviously positive and $f$ is strictly increasing. 
The convergence of $E_h(0)$ to $E_{\overline{h}}(0)$ can be proved similarly. 
\end{proof}

Let us now prove Proposition \ref{prop reg initial}.

\begin{proof}[Proof of Proposition \ref{prop reg initial}]
By \eqref{non-degeneracy}, we can find $L>0$ and let $\vep> 0$ small such that 
\begin{equation}\label{min-max-f} 
\begin{aligned}
& f(h, q_2) - f(h, q_1) \ge L (q_2 - q_1) \\
&\text{for} \; \;  \mu(E_{\overline{h}}(0))\leq q_1\leq q_2 \leq \mu(E_{\overline{h}}(0))+\vep, \; \overline{h}\le h \le \overline{h} + \varepsilon. 
\end{aligned}
\end{equation}
By (A3), \eqref{eq a5},  Proposition \ref{prop:steiner}, Lemma \ref{lem:con-coe} as well as the monotonicity of $s \mapsto m(D_h(s))$, for any $b>0$ and $0<\vep<b$, there exists $0<c<b\vep$ such that 
\begin{equation}\label{min-max-m} 
 \mu(E_h(s)) - \mu(E_h(0))\geq cs  \quad \text{for} \; \; \overline{h} \le h \le \overline{h} + \varepsilon, \; \; 0 \le s \le 1/b. 
\end{equation}

Applying \eqref{min-max-f} with $q_1=\mu(E_h(0))$ and $q_2=\mu(E_h(0))+cs$ ($0\leq s\leq 1/b$), we utilize \eqref{min-max-m} and (A2) to deduce 
\[
f(h, \mu(E_h(s)))-f(h,  \mu(E_h(0)))\geq Lc s,
\]
which verifies \eqref{reg initial} with $a=Lc$.
\end{proof}

We conclude this work with a concrete example, which shows that the exponential-like convergence rate shown in Theorem \ref{thm exponent} is optimal. 
\begin{ex}\label{ex decay}
Let us take $f(r, q)=F(q)$, where $F: [0, \infty)\to \R$ is a bounded, continuous, strictly increasing function such that $F(q)=q-1$ for all $0\leq q\leq 2$. We consider \eqref{E1}\eqref{initial} with $n=2$. Let the density function $\Theta$ in (A3) be  given by 
\[
\Theta(x)={1\over \pi (1+|x|)^2}, \quad 
\]
which yields, for the disk $B_\rho\subset \R^2$ centered at the origin with radius $\rho\geq 0$,  
\[
\mu(B_\rho)=\int_{B_\rho} {1\over \pi(1+|x|)^2}\, dx=\int_0^\rho \int_0^{2\pi} {1\over \pi(1+\sigma)^2}\, ds\, d\sigma={2\rho\over \rho+1}\in [0, 2).
\]
Suppose that the initial value $u_0$ is radially symmetric with respect to the origin, i.e., we can write $u_0(x)=v_0(|x|)$ for a function $v_0\in C([0, \infty))$. We further assume that $v_0$ is strictly increasing. In view of our optimal control interpretation, we see that the unique solution $u$ preserves the spatial symmetry and the strict monotonicity in the radial variable for all times. We thus can express $u(x, t)=v(|x|, t)$ in terms of a continuous function $v\in C([0, \infty)\times [0, \infty))$ with $\rho\mapsto v(\rho, t)$ strictly increasing. It follows that, for any $(x, t)\in \R^2\times [0, \infty)$ with $\rho=|x|$,  
\[
\mu(\{u(\cdot, t)< u(x, t)\})=\mu(\{u(\cdot, t)\leq u(x, t)\})={2\rho\over \rho+1}, 
\]
which implies that 
\[
f(u(x, t), \mu(\{u(\cdot, t)< u(x, t)\}))=f(u(x, t), \mu(\{u(\cdot, t)\leq u(x, t)\}))={\rho-1\over \rho+1}.
\]
Then in this case $\ol{h}=v_0(1)$ and \eqref{E1} reduces to the state constraint problem:
\[
\left\{
\begin{aligned}
&v_t+|v_\rho| {\rho-1\over \rho+1}=0 \quad \text{in $(0, \infty)\times (0, \infty)$,}\\
&v_t+|v_\rho| {\rho-1\over \rho+1}\geq 0 \quad \text{in $[0, \infty)\times (0, \infty)$.}
\end{aligned}
\right.
\]
Using the standard control-based representation formula for viscosity solutions of (local) Hamilton-Jacobi equations (for example,  \cite{BCBook}), we see that the unique viscosity solution can be expressed by
\begin{equation}\label{eq loc control}
v(\rho, t)=\begin{cases}
\inf_{\alpha} v_0(y(t; \alpha))& \text{if $\rho\geq 1$,}\\
\sup_{\alpha} v_0(y(t; \alpha)) & \text{if $0\leq \rho<1$,}
\end{cases}
\end{equation}
where $\alpha\in L^\infty(0, \infty))$ denotes a control function satisfying $|\alpha(s)|\leq 1$ a.e. and $y: [0, \infty)\to [0, \infty)$ is a locally Lipschitz function satisfying the state equation 
\[
y'(s)= \alpha(s)\frac{y(s)-1}{y(s)+1}, \quad s>0 \quad \text{with $y(0)=\rho$.}
\]
The optimal control is clearly $\alpha\equiv -1$ and a direct computation yields 
\[
\log(e^{y(t)} (y(t)-1)^2) -\log (e^\rho (\rho-1)^2) =\int_0^t \alpha(s)\, ds=-t.
\]
It follows from \eqref{eq loc control} that $v(\rho, t)=v_0(\rho(t))$, where $\rho(t)\geq 0$ satisfies 
\[
e^{\rho(t)}(\rho(t)-1)^2=(\rho-1)^2e^{\rho-t}.
\]
Since $e^{\rho(t)}\geq 1$, we thus have $|\rho(t)-1|\leq |\rho-1| e^{\rho-t\over 2}$.
Utilizing the modulus of continuity $g$ of $u_0$, we are led to 
\[
|u(x, t)-\ol{h}|=|v_0(\rho(t))-v_0(1)|\leq g(|\rho(t)-1|)\leq g\left(||x|-1| e^{|x|-t\over 2}\right)
\]
for all $x\in \R^2$ and $t\geq 0$. This estimate is consistent with \eqref{large time est} in Theorem \ref{thm exponent}. It actually agrees with the case disucssed in Remark \ref{rmk improved decay}; the exponent $1/2$ for the exponential decay in time does not depend on $x$. Note that in this example, for any $0<\lambda<1/2$ close to $1/2$, there exists $b>0$ small such that 
\[
f(h, \mu(E_h(s)))=\frac{v_0^{-1}(h)+s-1}{v_0^{-1}(h)+s+1}
\]
satisfies \eqref{reg initial change} with $\beta=1$. Our result \eqref{upper decay} in Remark \ref{rmk improved decay}, together with Proposition \ref{prop exponent-lower}, ensures that for any compact set $K\subset \R^2$, there exist $A, C>0$ such that %
\[
\sup_{x\in K}|u(x, t)-\ol{h}|\leq \sup_{x\in K} g(A e^{C(|x|+1)-\lambda t})
\]
for any $t>0$ large and  any $\lambda<1/2$. However, the exponent $1/2$ is not covered. It would be interesting to further establish general convergence results for the optimal exponent. 
\end{ex}

\appendix

\section{Proof of comparison principle}\label{sec:app}

In this appendix we present a proof of Theorem \ref{thm:comparison1}.

\begin{proof}[Proof of Theorem \ref{thm:comparison1}]
Assume by contradiction that $\sup_{\R^n \times [0,T)}(u-v) =: \theta >0$. 
Then, there exists $\lambda > 0$ such that 
\[ \sup_{(x,t) \in \R^n \times [0,T)} \left\{u(x,t) - v(x,t) - \frac{\lambda}{T-t} \right\} > \frac{3 \theta}{4}. \]
There exists $(x_1, t_1) \in \R^n \times [0,T)$ such that $u(x_1, t_1) - v(x_1, t_1) - \lambda/ (T-t_1) > \theta/2$. 
Since $\sup_{\R^n}(u(\cdot,0) - v(\cdot,0)) \le 0$, we have $t_1 > 0$. Define 
\[ \Phi(x,y,t) := u(x,t) - v(y,t) - \frac{|x-y|^2}{\varepsilon^2} - \alpha (|x|^2 + |y|^2) - \frac{\lambda}{T-t} \]
for $\varepsilon, \alpha > 0$. 
It is then clear that there exists $\alpha_0 > 0$ small such that 
\begin{equation}\label{tilde_sup} 
\sup_{(x, y, t) \in \R^{2n} \times [0, T)} \Phi (x, y, t) > \frac{\theta}{4} 
\end{equation}
for all $0< \alpha < \alpha_0$ and $\varepsilon >0$ small. 
By the same argument as that in proof of \cite[Theorem~3.1]{KLM}, 
we can prove that for any fixed $T>0$ and $M_1 > M_T$ large, there exists $M_2 > 0$ such that 
\begin{equation}\label{ineq:growth}
u(x,t) - v(y,t) \le M_1 |x-y| + M_2 (1+t) \quad \text{for} \; \; x, y \in \R^n \; \; \text{and} \; \; t \in [0,T),  
\end{equation}
which implies that $\Phi$ attains a maximum at some $(x_{\ep, \alpha}, y_{\ep, \alpha}, t_{\ep, \alpha}) \in \R^{2n} \times [0,T)$. 
We write $(\tilde{x}, \tilde{y}, \tilde{t})$ for $(x_{\ep, \alpha}, y_{\ep, \alpha}, t_{\ep, \alpha})$ to simplify our notations. 
It follows that 
\begin{align*}
&\frac{|\tilde{x} - \tilde{y}|^2}{\ep^2} + \alpha (|\tilde{x}|^2 + |\tilde{y}|^2) \le u(\tilde{x}, \tilde{t}) - v(\tilde{y}, \tilde{t}) - u(x_1, t_1) + v(y_1, t_1) + 2\alpha |x_1|^2 + \frac{\lambda}{T- t_1} - \frac{\lambda}{T - \tilde{t}}. 
\end{align*}
In view of \eqref{ineq:growth}, we have 
\begin{align*}
&\frac{|\tilde{x} - \tilde{y}|^2}{\ep^2} + \alpha(|\tilde{x}|^2 + |\tilde{y}|^2) \\
&\le M_1|\tilde{x} - \tilde{y}| + M_2(\tilde{t}+1) - u(x_1, t_1) + v(x_1, t_1) + 2\alpha |x_1|^2 + \frac{\lambda}{T-t_1} - \frac{\lambda}{T- \tilde{t}}. 
\end{align*}
It follows that 
\[ \frac{|\tilde{x} - \tilde{y}|^2}{\ep^2} - M_1 |\tilde{x} - \tilde{y}| + \alpha (|\tilde{x}|^2 + |\tilde{y}|^2) \le C \]
for some $C \ge 0$ which is independent of $\ep, \alpha$, which implies that $\sup_{0 < \alpha < \alpha_0} |\tilde{x} - \tilde{y}| \to 0$ as $\ep \to 0$ and, for any $\vep>0$, $\alpha(|\tilde{x}| + |\tilde{y}|) \to 0$ as $\alpha \to 0$. We also have
\[ 
\sup_{0 < \ep < \ep_0, \ 0 < \alpha < \alpha_0} u(\tilde{x}, 0 ) - v(\tilde{y}, 0) \le \omega_0(\ep_0) \le \theta/4 \]
where $\omega_0$ is the modulus of continuity appearing in \eqref{initial_modulus}. 
Due to \eqref{tilde_sup}, we have $\tilde{t} > 0$ for all $0 < \ep < \ep_0$ and $0 < \alpha < \alpha_0$. 
Here, we fix $\ep > 0$ small enough so that $\tilde{t} > 0$. In what follows, we discuss two cases.

Case 1: Suppose that $\liminf_{\alpha \to 0} |\tilde{x} - \tilde{y}| > 0$ holds. Since $\Phi(x,x,\tilde{t}) \le \Phi(\tilde{x}, \tilde{y}, \tilde{t})$ for all $x \in \R^n$, we have 
\[ u(x, \tilde{t}) - u(\tilde{x}, \tilde{t}) \le v(x, \tilde{t}) - v(\tilde{y}, \tilde{t}) - \frac{|\tilde{x} - \tilde{y}|^2}{\ep^2} + 2 \alpha |x|^2 - \alpha(|\tilde{x}|^2 + |\tilde{y}|^2), \]
which implies $\tilde{V}_\alpha \subset U_\alpha$, where 
\[ U_\alpha := \{u(\cdot, \tilde{t}) < u(\tilde{x}, \tilde{t})\}, \quad \tilde{V}_\alpha := \left\{v(\cdot, \tilde{t}) + 2 \alpha |\cdot|^2 \le v(\tilde{y}, \tilde{t}) +  \frac{|\tilde{x} - \tilde{y}|^2}{\ep^2} \right\}. \]
Notice also that $u(\tilde{x}, \tilde{t}) > v(\tilde{y}, \tilde{t})$ due to $\Phi(\tilde{x}, \tilde{y}, \tilde{t}) > \theta/4$. 
Letting $V_\alpha := \{v(\cdot, \tilde{t}) \le v(\tilde{y}, \tilde{t}) \}$,  
then by (A1)--(A3) we have 
\begin{align*} 
&\; f(v(\tilde{y}, \tilde{t}), \mu(V_\alpha)) - f(u(\tilde{x}, \tilde{t}), \mu(U_\alpha)) \le\; f(u(\tilde{x}, \tilde{t}), \mu(V_\alpha)) - f(u(\tilde{x}, \tilde{t}), \mu(U_\alpha)) \\
=&\; f(u(\tilde{x}, \tilde{t}), \mu(V_\alpha)) - f(u(\tilde{x}, \tilde{t}), \mu(V_\alpha \cap \tilde{V_\alpha})) + f(u(\tilde{x}, \tilde{t}), \mu(V_\alpha \cap \tilde{V}_\alpha)) - f(u(\tilde{x}, \tilde{t}), \mu(U_\alpha)) \\
\le&\; \omega_f (\mu(V_\alpha \setminus \tilde{V}_\alpha)). 
\end{align*}
Due to the assumption $\liminf_{\alpha \to 0} |\tilde{x} - \tilde{y}| > 0$, we have 
\begin{align*}
\bigcap_{0 < \alpha <\alpha_0} \bigcup_{0<\tilde{\alpha}<\alpha} V_{\tilde{\alpha}} \setminus \tilde{V}_{\tilde{\alpha}} = \bigcap_{0 < \alpha <\alpha_0} \bigcup_{0<\tilde{\alpha}<\alpha} \left\{v(\tilde{y}, \tilde{t}) + \frac{|\tilde{x} - \tilde{y}|^2}{\ep^2} - 2\tilde{\alpha} |\cdot|^2 < v(\cdot, \tilde{t}) \le v(\tilde{y}, \tilde{t}) \right\} = \emptyset, 
\end{align*}
which implies 
\[ \limsup_{\alpha \to 0} \mu(V_\alpha \setminus \tilde{V}_\alpha) \le \mu\left( \bigcap_{0 < \alpha <\alpha_0} \bigcup_{0<\tilde{\alpha}<\alpha} V_{\tilde{\alpha}} \setminus \tilde{V}_{\tilde{\alpha}}\right) = \mu(\emptyset) = 0. \]
Therefore it follows that 
\begin{equation}\label{set-nonnega}
\limsup_{\alpha \to 0} f(v(\tilde{y}, \tilde{t}), \mu(V_\alpha)) - f(u(\tilde{x}, \tilde{t}), \mu(U_\alpha)) \le 0. 
\end{equation}

Since $u$ and $v$ are respectively a  subsolution and supersolution to \eqref{E1}, we get 
\begin{equation}\label{com-subsuper} 
h + |p_1| f(u(\tilde{x}, \tilde{t}), \mu(U_\alpha)) \le 0, \quad k + |p_2| f(v(\tilde{y}, \tilde{t}), \mu(V_\alpha)) \ge 0, 
\end{equation}
where $h, k\in\R$ are real numbers satisfying 
\begin{equation}\label{con-hk} 
h - k = \frac{\lambda}{(T-\tilde{t})^2}, 
\end{equation}
\begin{equation}\label{con-p1p2} 
p_1 = \frac{2 (\tilde{x} - \tilde{y})}{\ep^2} + 2\alpha \tilde{x}, \quad p_2 = \frac{2 (\tilde{x} - \tilde{y})}{\ep^2} - 2\alpha \tilde{y}. 
\end{equation}
Therefore, applying \eqref{set-nonnega}, we have 
\begin{align*}
\frac{\lambda}{T^2} \le&\; \limsup_{\alpha \to 0} \frac{\lambda}{(T-\tilde{t})^2} = \limsup_{\alpha \to 0} \big(h-k\big) \\
\le&\; \limsup_{\alpha \to 0} \Big(|p_1 - p_2| f(u(\tilde{x}, \tilde{t}), \mu(U_\alpha)) + |p_2| \big(f(v(\tilde{y}, \tilde{t}), \mu(V_\alpha)) - f(u(\tilde{x}, \tilde{t}), \mu(U_\alpha))\big)\Big) \le 0, 
\end{align*}
which is a contradiction. 

Case 2: Suppose that there exists a sequence $\alpha_i$ such that $|\tilde{x} - \tilde{y}|\to 0$ as $\alpha_i \to 0$. Note that in this case \eqref{com-subsuper}  still holds with $h, k, p_1, p_2$ satisfying \eqref{con-hk} and \eqref{con-p1p2}. 
We thus obtain 
\[
\frac{\lambda}{T^2} \le 
 \lim_{\alpha_i \to 0} \big(h-k\big) 
\le\; \lim_{\alpha_i \to 0} \big(|p_2| f(v(\tilde{y}, \tilde{t}), \mu(V_\alpha)) - |p_1| f(u(\tilde{x}, \tilde{t}), \mu(U_\alpha))\big) = 0, 
\]
which is a contradiction.  
\end{proof}





\bibliographystyle{abbrv}


\end{document}